\definecolor{myGreen}{RGB}{10 100 10}
\newcommand{\Id}{\text{Id}}
\newcommand{\ip}[2]{\langle #1, #2 \rangle}
\newcommand{\bC}{\mathbb{C}}
\newcommand{\bI}{\mathbb{I}}
\newcommand{\bJ}{\mathbb{J}}
\newcommand{\bH}{\mathbb{H}}
\newcommand{\bB}{\mathbb{B}}
\newcommand{\bR}{\mathbb{R}}
\newcommand{\bF}{\mathbb{F}}
\newcommand{\bN}{\mathbb{N}}
\newcommand{\cA}{\mathcal{A}}
\newcommand{\cM}{\mathcal{M}}
\newcommand{\cS}{\mathcal{S}}
\newcommand*{\tran}{\mathsf{T}}
\newcommand*{\re}{\text{Re }}
\newcommand*{\Rea}{{\mathfrak R}}
\newcommand*{\im}{\text{Im }}
\newcommand*{\ncS}{\mathrm{ncS}}
\newcommand*{\ncconv}{\mathrm{ncconv}}
\newtheorem{theorem}{Theorem}[section]
\newtheorem{lemma}[theorem]{Lemma}
\newtheorem{prop}[theorem]{Proposition}
\newtheorem{cor}[theorem]{Corollary}
\newtheorem{thm}[theorem]{Theorem}
\newtheorem*{cor*}{Corollary}
\newtheorem*{thm*}{Theorem}
\newtheorem*{lem*}{Lemma}
\newtheorem*{prop*}{Proposition}
\theoremstyle{definition}
\newtheorem{example}[theorem]{Example}
\newtheorem*{defn*}{Definition}
\theoremstyle{remark}
\title{\textbf{Real Noncommutative Convexity I}}
\author{David P. Blecher}
\address{Department of Mathematics, University of Houston, Houston, TX 77204-3008.}
\email{dpbleche@central.uh.edu}
\author{Caleb Becker McClure}
\address{Department of Mathematics, University of Houston, Houston, TX 77204-3008.}
\email{cbmcclur@central.uh.edu}
 \subjclass[2020]{47A20, 46A55, 47L07; Secondary 46L51, 46L52, 47L05, 47L25, 47L30}
\begin{document}
    \begin{abstract} 
We   initiate the theory of real noncommutative (nc) convex sets, the real case of the recent and profound complex theory   developed by  Davidson and Kennedy.   The present paper focuses on the real case of the topics from the first several sections of their memoir \cite{DK}.   Later results  will be discussed in future papers.    We develop here some of the infrastructure of  real nc convexity, giving many foundational structural results for real operator systems and their associated nc convex sets, and 
elucidate how the complexification interacts with the basic convexity theory constructions.   
Several  new features appear in the real case, including the novel notion of the complexification of a nc convex set. 
  \end{abstract} \maketitle

\begin{center} {\em In memory of Zhong-Jin Ruan (1955-2025)} \end{center} 

            \section{Introduction}

We open 
 with the categorical duality of compact convex sets and operator systems, beginning with the classical case. 
              Let $X$ be a compact Hausdorff space. A {\em concrete function system} is a selfadjoint unital subspace  $V$ of $C(X)$, where $C(X)$ is the abelian $C^*$-algebra of scalar valued continuous functions on $X$.  Our scalar field $\bF$ will  either be  $\bR$ or $\bC$.  An element $f \in V$ is called positive if for all $x \in X$ we have $f(x) \geq 0$. A state on $V$ is a scalar valued linear functional on $V$ which is unital, selfadjoint, and positive in the sense that it maps positive functions to positive numbers.
        Equivalently, these are the unital contractive functionals on $V$.  The collection of states on $V$, denoted by $S(V)$, is a convex set.
        By the Banach-Alaoglu theorem 
         it is also compact with the 
         weak* topology. Conversely, given a compact convex set $K$ the set $A(K)$ of affine scalar valued functions on  $K$ is a function system inside $C(K)$.  There is also an abstract characterization of function systems which will be discussed in a later section.
Kadison's representation theorem shows that there is a duality between function systems and compact convex sets. Indeed, we have that for all function systems $V$ and compact convex sets $K$
        \[A(S(V)) \cong V, \quad S(A(K)) \cong K , \]
        where the isomorphism in both cases is given by evaluation. The         maps taking $V \mapsto S(V)$ and $K \mapsto A(K)$ are contravariant functors,  so that the category of convex sets is dually equivalent to the category of function systems. All the above works  for  convex sets in both real or complex vector spaces, and for real or complex function systems (see e.g.\ \cites{Alfsen,PT}).         However               the relationship between the real and complex theory is complicated in places.  For example,  
        many convexity theoretic results about complex function spaces $V$ are  proved via Re$\, (V)$, i.e.\  in the real setting, since the direct complex variant can be messy.

        In 
        \cite{DK} Davidson and Kennedy establish a profound noncommutative (nc) convexity theory, in the complex case.   Early in this work they exhibit a categorical equivalence similar to the one above, but for  noncommutative convex sets and complex operator systems,  giving a non-commutative analogue of Kadison's duality. This is built on previous work of Webster and Winkler \cite{WW}, who use complex matrix convex sets instead.  
       Much of the   matrix  convexity  in this sense hitherto was done in the complex case.  
       See also \cite{Wittstock,CEinj} for  e.g.\ the original definition of matrix  convexity in the real case.  
       However there
        has been very substantial and remarkable 
        work from a different perspective  on this matrix convexity in the real case, some of it quite recent, motivated in part by connections to system engineering and the  matrix inequalities and convexity found there.  See e.g.\ \cite{HKM} and references therein, where the theory of matrix convex sets has been developed from the point of view of positivity domains of (affine linear) polynomial maps (note that all polynomially convex matrix convex sets are defined by an operator system).  For example the papers \cite{Ev,EH,JKMMP,KMS, CN}        develop many beautiful aspects of real matrix convexity, particularly for finite dimensional classes of particular interest like  spectrahedra and their variant of matrix convexity. 
       (We thank Scott McCullough and James Pascoe for discussions on this history and recent developments.)         Aspects of this work will definitely interact with our program in the future, and it furnishes very many interesting and important examples.    Indeed for this reason we will not spend much time on new examples in our paper: there is already a great abundance of interesting examples in that literature.  Many more may be derived from the huge existing classical convexity literature, as we hope to present elsewhere.     However even basic examples are very interesting as we shall see.

 Earlier this year the first author and Russell  developed the theory of real operator systems in \cite{BR}.  The present paper is a sequel to this, which in turn was a sequel of \cite{BReal}.  Given any real operator system, there is a very natural way to  complexify to get a complex operator system. The process of complexification is functorial in the sense that many of the constructions done with operator systems (for instance, max/min operator systems, duals, and tensor products) usually commute with complexification. This allows much of the complex theory of operator systems  and nc convex sets
 to be applied in the real case. 
        Therefore it is natural to ask if there is a categorical equivalence between real compact noncommutative (nc) convex sets and real operator systems?  More generally does the theory of Davidson and Kennedy         carry over to the real case?
         There are several motivations for investigating this.  For example, classical convexity is in many ways essentially a real theory, as inspection of foundational texts such as \cite{Alfsen, Phelps} shows, or e.g.\ as one sees in nearly all graduate courses on convexity theory, or in some aspects of the list in the previous paragraph.  Thus the real theory is likely to play a future role in operator algebras and mathematical physics.  As mentioned in more detail in the second paragraph of \cite{BR}, real structure occurs naturally and crucially
 in very many areas of mathematics and mathematical physics, and in several deep mathematical theories at some point a crucial advance has been made by switching to the real case (e.g.\ in $K$-theory and the Baum-Connes conjecture, see also for example\ \cite{Ros}).   This is sometimes because  the real category is bigger and hence allows more freedom.  In our case, every complex operator system (resp.\ nc convex set) is clearly a real operator system
 (resp.\ real nc convex set), but there are many interesting real operator systems (resp.\ nc convex sets) that are not complex operator systems (resp.\ complex nc convex sets).  An obvious example of this is the selfadjoint matrices:\ the real nc convex set associated with these is not a nc convex set in the sense of \cite{DK}.  This is a somewhat trivial example (much better examples may be retrieved from the list  in the last paragraph), but it illustrates the point. 
 
 In this paper we initiate the theory of real nc convex sets in the sense above, investigating the real case  of Davidson and Kennedy's theory, at least  up to  Chapter 5 of \cite{DK} 
(since our paper is already lengthy, later results will be discussed in future papers).  We also have many complementary results, and several  new features appear in the real case.  Many of these are connected to the fact that, as opposed to the classical case of convex sets and function systems, it turns out that there is a very natural way to complexify a nc convex set. Because this complexification is functorial this will give us an efficient way to generalize  the theory to the real case.  We give many foundational structural results for real operator systems and their associated nc convex sets.  In particular we elucidate how the complexification interacts with the basic convexity theory constructions. In addition, we include some results  about nc convex sets in the real and complex case that do not seem to be in the literature.   Most of our results and proofs about nc convex sets apply verbatim to matrix convex sets in an obvious way, but we  focus on the nc convex category.

 The differences between the real and complex case discovered in \cite{BR} show up for us too, here and in the sequel papers \cite{BMcII}.  
 For example, the casual reader may miss some of the following items, some of which also mention some of our novel contributions: 1)\ Some of the major techniques in the complex case are not  available in the real case.   For example,  a common trick 
 in \cite{DK} and its sequels, is that the complex nc affine function space $A(K)$ is  unitally order isomorphic to the classical affine function space on $K_1$ (the `first level' of $K$). In particular the classical state space faithfully reproduces the affine structure and order at level 1. 
  This is never true in the real case unless $K$ is what we call `symmetric', that is the corresponding real operator system has trivial (i.e.\ identity) involution  (see Theorem \ref{DK2.5.8} and 
Remark 2 after it). 
 This has significant ramifications for us;  we will often have to find other routes, here and in the sequels. 
This is related to 2),  the absence of a Min and Max functor for nonsymmetric  nc convex sets, e.g.\ those corresponding to general
        real operator systems such as the quaternions.   Indeed we shall see examples of minimal and maximal complex nc convex sets with a real counterpart  
    which  is not minimal nor maximal. 
        3)\ The use of complexification is usually pervasive for us as alluded to above.  We spend much time in our series of papers on developing  various technical features of the complexification, and developing   
        other effective tools in the real case.   4)\  There are places where the existing complex proofs do not work, and we will usually point such out.
        Sometimes one is then saved by complexification (point 3), as is the case for example in the proof of one of our categorical dualities.           Also `proof by complexification' is often much quicker, and sometimes safer.  5)\ 
   Even a cursory survey of our proofs will reveal that in the real case many calculations have their own technicalities which are quite different to the complex case.  
   As just one example of this, the reader will  notice in our papers a repeated use of a certain isometry $u$    and map $c$ 
   in many proofs.         
  
   With all this in mind we found it remarkable that things work out so well; so that we were able to establish the real case of so much of the complex theory.   
       The reader might keep in mind as illustrations when 
encountering the main results of our paper, a few basic examples (see Examples \ref{ConvexInterval1}, \ref{MatStates}, \ref{ConvexInterval2}, \ref{Quaternions}, \ref{ointi}, \ref{newbee}).   We often weave in such examples, however they may be exposited further  in a different section of the paper,  and 
 in later papers.    Our advances in the present paper are of course a primary foundation for the sequel papers.  In \cite{BMcII} we focus on the theory of nc extreme (and pure and maximal) points and the nc Choquet boundary in the real case, and on the theory of real nc convex and semicontinuous functions and real nc convex envelopes. Again our main emphasis there is on how these interact with complexification, for example complexifying nc convex functions and their convex envelopes.   A third paper under construction is focused on nc real Choquet theory and complementary topics.

        Turning to the structure of our paper, Section \ref{SecPreliminaries} gives some background on real and complex operator systems and their complexifications.
        In keeping with the task and nature of our paper we do however expect the reader to be reading alongside  with parts of \cite{BR,DK}.  
        The very recent textbook  \cite{Dav} contains an account of many results from \cite{DK}. 
        Because of this we also do not need to be very pedantic 
        or overly careful with definitions, preliminaries, or the history of the subject, which may usually be found in  detail in those sources.  In Section \ref{SecBasicDefinitions} we define real and complex non-commutative (nc) convex sets and give basic examples such as the real non-commutative state space. Section \ref{SecCompl} describes the complexification of a real nc convex set. This can be done intrinsically by specifying what elements will be in the complexification, or extrinsically by taking a suitable complex nc convex hull of a real nc convex set. These two constructions will be equivalent.   We show that there is a unique reasonable complexification of a real nc convex set.  We also prove functorial properties of the complexification. For instance, if $K$ is a nc convex set and $A(K)$ are the nc affine functions on $K$, then
        \[A(K_c) = A(K)_c .\]
        In Section \ref{SecCatDuality}, we show the real version of Davidson and Kennedy's categorical duality.  Some useful techniques here are using a real version of the nc separation theorem (see Theorem \ref{separationthrm}), or the functoriality of complexification from Section \ref{SecCompl}.  This has many applications.

        Section \ref{SecMinMax} begins with some facts about  function systems.
        A classical compact convex set $K$ may be turned into a compact nc complex convex set using the fact that the function system $A(K)$ can be given a minimum and maximum operator system structure OMIN and OMAX \cite{PTT,BXhabli,BR}, and then employing the categorical duality between compact nc convex sets and operator systems.  Thus we  define ${\rm Min}(K)$ and ${\rm Max}(K)$  via the relation
         $A({\rm Min}(K)) = {\rm OMIN}(A(K))$ and $A({\rm Max}(K)) = {\rm OMAX}(A(K))$.
        As with operator systems, the min and max structure given to a real convex set commutes with complexification. This process uses the bipolar of a nc convex set, and so we develop that in Section \ref{SecBipolarThrm}. Section \ref{SecNoncFunct} develops the important notion of {\em non-commutative functions} in the real case. 
                We may avoid many of the complications  in the proofs 
        by e.g.\  proving key theorems such as $4.3.3$ in \cite{DK} (or \cite[Theorem 16.8.10]{Dav}) in the real case by complexification.

        \section{Preliminaries}\label{SecPreliminaries}
        \subsection{Operator Systems and Operator Spaces}

        For general background on operator systems and spaces, and in particular on the definitions etc.\ in the rest of this section, we refer the reader to e.g.\ \cite{Pnbook,BLM,DK,Dav} and in the real case to e.g.\ \cite{BReal,BR}.  
It might also  be helpful to also browse some of the other  existing real operator space theory  e.g.\ \cite{ROnr,RComp,Sharma,BT,BCK}.  Some basic 
real $C^*$- and von Neumann algebra theory may be found in \cite{Li}.  

We write $M_n(\bR)$ for the real $n \times n$ matrices, or sometimes simply $M_n$ when the context is clear. 
  Similarly in the complex case. 
We sometimes use the quaternions $\bH$ as an example: this is simultaneously  a real operator system, a real Hilbert space, and a real C$^*$-algebra, usually thought of as a real $*$-subalgebra of $M_4(\bR)$ or $M_2(\bC)$. Its complexification is $M_2(\bC)$.  
 The letters $H, L$ are usually reserved for real or complex Hilbert spaces, and $K$ for (nc) compact convex sets.  Every complex Hilbert space $H$ is a real Hilbert space, i.e.\ we forget the complex structure.   More generally we write $X_r$ 
 for a complex Banach space regarded as a real Banach space. 
 We write $X_{\rm sa}$ for the selfadjoint elements  
 in a $*$-vector space $X$.  In the complex case 
 $M_n(X)_{\rm sa} \cong (M_n)_{\rm sa} \otimes X_{\rm sa}$, but this  fails for real spaces. A subspace of $B(H)$ is {\em unital} if contains the identity,   
and a map $T$ is unital if $T(1) = 1$.  Our identities $1$ always have norm $1$.  We write $\Rea \, a$ for $\frac{1}{2}(a + a^*)$, while for 
$z \in M_n(\bC)$ we write $\re z$ for $x \in M_n(\bR)$ where $z = x + iy$ for  $y \in M_n(\bR)$. Finally, for a cardinal $n$ we define the isometry $u_n = \frac{1}{\sqrt{2}} \begin{bmatrix} 
    1_n \\ -i \cdot 1_n 
\end{bmatrix}$ where $1_n$ is the $n$-dimensional identity operator.  We sometimes also write this as $u$.

 A {\em concrete complex (resp.\ real) operator system} $V$ is a unital selfadjoint subspace of $B(H)$ for $H$ a complex (resp.\ real) Hilbert space. For $n \in \bN$ we have the identification $M_n(B(H)) \cong B(H^{(n)})$ where $H^{(n)}$ is the $n$-fold direct sum of $H$. From this identification, $M_n(V)$ inherits a norm and positive cone. The latter is the set $M_n(V)^+ \coloneq \{x \in M_n(V): x = x^* \geq 0 $ in $ B(H^{(n)})\}$. For  $n \in \bN$ we define the amplification of a linear map $\varphi: V \to W$ by 
        \[\varphi^{(n)}: M_n(V) \to M_n(W)\]
        \[[x_{ij}] \mapsto [\varphi(x_{ij})] . \]
        The natural morphisms between operator systems are {\em  unital completely positive} (ucp) functions, which are linear maps $\varphi: V \to W$ that are unital and every amplification is positive (or equivalently selfadjoint and contractive). The isomorphisms (resp.\ embeddings) of operator systems which are used in this paper are bijective (resp.\ injective) ucp maps whose inverse (resp.\ inverse in its range) is ucp.  These are called unital complete order isomorphisms (resp.\ unital complete order embeddings); or {\rm ucoi} (resp.\ ucoe) for short.

        Similarly a concrete operator space $E$ is a subspace of $B(H)$ with norms on $M_n(E)$ inherited from $B(H^{(n)})$. The natural morphisms between operator spaces are the completely bounded maps, namely the linear maps $\varphi$ between operator spaces such that the amplifications of $\varphi$ are uniformly bounded. If the uniform bound is $\leq 1$ then $\varphi$ is called a {\em complete contraction}. If the amplifications of $\varphi$ are isometries then $\varphi$ is a {\em complete isometry}.  
     For a possibly infinite cardinal $n$, $M_n(E)$ is the space of matrices whose ‘finitely supported' submatrices have uniformly bounded norm. In the case $E$ is the scalar field we simply write $M_n$;  thus 
$M_n \cong B(l^2_n)$. Indeed for every Hilbert space $H$, $B(H) \cong M_n$ $*$-isomorphically for some $n$, after one chooses an orthonormal basis.
   The above definitions hold for both real and complex operator systems. 
        
        There are abstract characterizations of real/complex operator spaces and operator systems. An abstract real/complex operator space is a vector space $E$ with a sequence of matrix norms $\{||\cdot||_n\}_{n=1}^\infty$ satisfying Ruan's axioms.  Operator systems on the other hand         are usually abstractly characterized using the 
        notion of an {\em order unit} $e$: these satisfy  that for any selfadjoint elements $x$ there is a $t > 0$ such that $x + te \geq 0$.  We say that $e$ is {\em archimedean} if  $x+ \epsilon e \geq 0$ for  all $\epsilon > 0$ implies $x \geq 0$.
        For abstract complex operator systems, we begin with a complex $*$-vector space $V$ (a vector space with a period 2 conjugate linear map $*: V \to V$), with a {\em matrix ordering} $M_n(V)^{+}$ and an {\em archimedean matrix order unit} (or {\em AOU}) $e$. The definition is the same in the real case, with conjugate linear replaced by linear.  The matrix ordering consists of  cones $M_n(V)^+$ in the $n \times n$ matrices of $V$,  which are selfadjoint, proper, and closed under compressions by matrices $\beta \in M_{n,m}(\bC)$.  An archimedean matrix order unit is an element $e \in V$ such that $e \otimes 1_n$ (where $1_n$ is the identity of the $n\times n$ matrices) is an archimedean order unit for each $n$. These conditions define an abstract operator system.  One may then prove that there is a unital complete order embedding of this space into $B(H)$ for some $H$.  
        See e.g.\ \cite{Dav, Pnbook}, or \cite{BR} in the real case. 

        A real operator system can naturally be made into a complex operator system by complexification. To do this, we start with a real abstract operator system, call it $V$, with involution $*$, matrix ordering $M_{n}(V)^{+}$, and Archimedean matrix order unit $e$. The complexification of $V$ is the complex vector space $V_c$ consisting of elements $x+iy$ for $x,y \in V$. We give this a conjugate linear involution $(x+iy)^* = x^* - i y^*$. The matrix ordering $M_n(V_c)^+$ will be defined by
        \[M_n(V_c)^+ = \{x+iy \in M_n(V_c): c(x,y) \geq 0\}\]
        where $$c(x,y) = \begin{bmatrix}
            x & -y\\ y & x
        \end{bmatrix} .$$ 
  We also sometimes write $c(x+iy)$ for $c(x,y)$.       The element $e+i0$ will be an archimedean order unit. With this, the complexification becomes an abstract complex operator system.  Moreover one can show that this operator system structure on the complexification is the unique one satisfying Ruan's completely {\em reasonable} condition, namely that the map $\theta_V(x+iy)=x-iy$ is a ucoi (or equivalently, is completely contractive).

        In part of \cite{BR} it was checked that many of the basic theorems and constructions for complex operator systems also hold for real operator systems. Very many foundational
structural results for real operator systems were developed, and it was shown  how the complexification interacts
with the basic constructions in the subject. 
       In certain parts of our paper we will need real operator systems $V$ with trivial involution $x^*=x$.
         It is easy to see that these coincide with the operator systems which are the selfadjoint part of complex operator systems. (Or, of real operator systems.) Thus they form an important class of real operator systems. Note however that in this case $M_n(V)$ has a nontrivial involution for $n \geq 2$, the transpose.
         At the end of Section 4 we characterize the nc convex sets associated with such operator systems: they are the {\em symmetric} nc convex sets.
         Just as `level 1' (that is, $M_1(V)$) of the complex operator systems are exactly the complex function systems 
         (see e.g.\ Section 4.3 of 
\cite{KRI}) and \cite{PT,PTT}, 
the real function systems (or real (unital) function spaces) are exactly `level 1' of the real operator systems with trivial (i.e.\ identity) involution \cite[Section 9]{BR}.  
 It is shown in \cite{BR} that a real operator system $V$ can be given a
minimum or a maximum operator system structure if and only if $V$ has trivial
 involution.  
  More precisely,
 by Remarks after Examples 9.6 and 9.14 in that paper, and by  \cite[Proposition 9.19]{BR}, OMAX$(V )$and OMIN$(V )$ are operator systems if and only if the involution on the real operator system $V$ is trivial, i.e.\ the identity. 
 
	\subsection{Noncommutative real convex sets and affine functions}     \label{SecBasicDefinitions}

As stated in e.g.\ \cite{BReal,BR}, every positive functional on a real operator system is a multiple of a state, and every contractive unital functional is a state.  The norm of a positive functional (resp.\ cb norm of a completely positive map) is its (resp.\ the norm of its) value at 1.  The real states $\varphi$ on a real operator system $V$ are precisely the real parts of complex states on $V_c$ (such as $\varphi_c$), or of a complex $C^*$-algebra generated by $V_c$. However, the real parts of two different such complex states may coincide on $V$.  Similarly, the real matrix states $\varphi$ on $V$ are precisely the `real parts' $\re \circ \psi$ of complex matrix states $\psi$ on $V_c$ (such as $\varphi_c$).

      See \cite[Section 1.3 and 1.6.1--1.6.4]{BLM} for  basics about dual operator spaces and their theory.  The real case is almost identical (see e.g\ \cite{BReal}).    We say a little more about the 
      weak* topology: For $E$ a real dual operator space space with operator space predual $E_*$, $M_n(E)$ is also a dual operator space, and we have $$M_n(E) \cong M_n(CB(E_*, \bR)) \cong CB(E_*, M_n)$$
        So, for $[f_{st}^{\alpha}] \in M_n(CB(E_*, \bR))$ and $[f_{st}] \in M_n(CB(E_*, \bR))$ we have that $[f^{\alpha}_{st}] \to [f_{st}]$ weak* if and only if for all $[x_{kl}] \in M_{m}(E_*)$ we have $[f_{st}^{\alpha}(x_{kl})] \to [f_{st}(x_{kl})]$ in $M_{nm}$.
        Also, $M_n(E_*) \cong w*CB(E,M_n)$, the weak* continuous completely bounded maps.

        For a real operator space $E$, as in \cite{DK} we  define $\cM(E) = \bigsqcup_n \, M_n(E)$ (for $n$ cardinals bounded by some cardinal $\kappa$) with $M_n(E)$ the matrix space of $E$.  Here $\bigsqcup_n$ is the disjoint union.
        For $X \subseteq \cM(E)$ define $X_n = X \bigcap M_n(E)$. We call this the nth level of the nc set $X$. 
In the case $E = \bR$ write $\cM = \cM(\bR)$. A {\em real non-commutative convex set} over $E$ is a subset $K = \bigsqcup K_n \subseteq \cM(E)$ such that

        \begin{enumerate}
            \item $K$ is graded: $K_n \subseteq M_n(E)$ for all $n$
            \item Closed under direct sums: $\sum \alpha_i x_i \alpha_i^\tran \in K_n$ for all bounded families $\{x_i \in K_{n_i}\}$ and every family of isometries $\{\alpha_i \in M_{n,n_i}\}$ where $\sum \alpha_i \alpha_i^\tran = 1_n$.
            \item Closed under compressions: $\beta^\tran x \beta \in K_m$ for every $x \in K_n$ and isometry $\beta \in M_{n,m}$.
        \end{enumerate} 
        As in \cite{DK} we say that $K$ is {\em closed/compact} if $E$ is a dual operator space and $K_n$ is closed/compact in the         weak* topology in $M_n(E)$. 

        For $\{x_i \in M_{n_i}(E)\}$ bounded and $\{\alpha_i \in M_{n_i,n}(\bR)\}$ such that $\sum \alpha_i^\tran \alpha_i = 1_n$, a {\em nc convex combination} of $x_i$ is defined as $\sum \alpha_i^\tran x_i \alpha_i \in M_n(E)$. As in the complex case (see Proposition 2.2.8 in \cite{DK}
        or Remark 16.4.3 (3) in \cite{Dav}) a subset $K \subseteq \cM(E)$ is nc convex if and only if it is closed under nc convex combinations.

        \begin{example}
        \label{ConvexInterval1}
            Let $a,b \in \bR$ with $a < b$. Then for $n \in \bN$ let $K_n = [a1_n, b1_n]$ where
            $$[a1_n, b1_n] = \{\alpha \in (M_n(\bR))_{sa}: a 1_n \leq \alpha \leq b 1_n\}.$$  Then 
            $K = \bigsqcup_{n \in \bN} K_n$ is a real compact convex set over $\bR$ called the {\em real compact operator interval},            and written as $\bI = [[a,b]]$. If we replace $\bR$ in the above definition with $\bC$ then we get the {\em complex compact operator interval}. We have $M_n(\bR) \subseteq M_n(\bC)$ and $M_n(\bR)^+ \subseteq M_n(\bC)^+$ and therefore the real operator interval is a subset of the complex operator interval. Here we see that $K_1$ agree in the real and complex case because $K_1$ is the interval $[a,b] \subseteq \bR$.
        \end{example}
        
                \begin{example}
        \label{MatStates}
            Let $V$ be a real operator system, then the {\em real nc state space} $K = \ncS(V)= \bigsqcup_n \mathrm{UCP}(V,M_n(\bR))$ is a point-weak$^*$ compact nc convex set over $V^*$. 
            This norms $V$.  Indeed as in the complex case 
            $$\| [v_{ij} ] \| = \sup
            \{ \| [\varphi(v_{ij}) ] \| : n \in \bN, \varphi \in \mathrm{UCP}(V,M_n(\bR)) 
            \} . 
            $$  To see this quickly note that taking a ucoe  $\varphi : V \to B(H) \cong M_\kappa$ does this in one shot.
            For finite dimensional subspaces $K$ of $H$
            the compressions $P_K \varphi(\cdot)_{|K}$  achieve in the limit the norm above, identifying  $B(K) \cong M_n$.                \end{example}

  \begin{example}
        \label{Matball}
    For a real dual operator space $E$,   the `sequence' of matrix unit  balls clearly constitute a nc convex matrix set $\bB(E)$.
      \end{example}
     
        The following proposition is useful for extending arguments about matrix convex sets to non-commutative convex sets.
        
        \begin{prop} \label{dum} Suppose we have $K$ a closed nc convex set over a dual operator space $E$, a net $\{x_i \in K_{n_i}\}$, and a net of isometries $\{\alpha_i \in \cM_{n, n_i}\}$ such that $\lim \alpha_i \alpha_i^\tran = 1_n$ and $\lim \alpha_i x_i \alpha_i^\tran = x \in M_n(E)$. Then, $x \in K_n$.
        \end{prop}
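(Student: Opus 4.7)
The plan is to exhibit, for each $i$, an element $z_i \in K_n$ with $z_i \to x$ in weak$^*$, and then invoke the weak$^*$-closedness of $K_n$. The obstruction is that $\alpha_i x_i \alpha_i^\tran$ does not itself lie in $K_n$ in general: writing $p_i := \alpha_i \alpha_i^\tran$, this is a projection of rank $n_i$ in $M_n(\bR)$, so $\alpha_i x_i \alpha_i^\tran$ only lives in the corner $p_i M_n(E) p_i$, and a single term in the direct-sum axiom requires $p_i = 1_n$.

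To fix this, I would complete $\alpha_i x_i \alpha_i^\tran$ to a full nc convex combination by filling in the orthogonal corner with a fixed harmless element. First, fix some $y \in K_n$; since the hypothesis guarantees $K \neq \emptyset$, some $K_m$ is nonempty, and compressing to $K_1$ followed by axiom (2) with the standard-basis isometries $e_j \in M_{n,1}$ produces the diagonal element $y_0 \cdot 1_n \in K_n$. For each index $i$, let $q_i := 1_n - p_i$ and pick an isometry $\gamma_i \in M_{n, n - n_i}$ with $\gamma_i \gamma_i^\tran = q_i$ (trivially empty if $n_i = n$). By axiom (3), $\gamma_i^\tran y \gamma_i \in K_{n - n_i}$. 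Since $\alpha_i \alpha_i^\tran + \gamma_i \gamma_i^\tran = 1_n$, axiom (2) applied to the two-term family $\{\alpha_i, \gamma_i\}$ with elements $\{x_i,\, \gamma_i^\tran y \gamma_i\}$ gives
\[z_i := \alpha_i x_i \alpha_i^\tran + \gamma_i (\gamma_i^\tran y \gamma_i) \gamma_i^\tran = \alpha_i x_i \alpha_i^\tran + q_i y q_i \in K_n.\]

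Since $M_n(\bR)$ is finite-dimensional, the convergence $\alpha_i \alpha_i^\tran \to 1_n$ is in operator norm, so $q_i \to 0$ in norm. With $y$ fixed in $M_n(E)$, this forces $q_i y q_i \to 0$ in norm and hence in weak$^*$. Combined with the hypothesis $\alpha_i x_i \alpha_i^\tran \to x$ in weak$^*$, this gives $z_i \to x$ in weak$^*$, and weak$^*$-closedness of $K_n$ yields $x \in K_n$. The main conceptual step is recognizing that $\alpha_i x_i \alpha_i^\tran$ alone is insufficient and that one must invoke the direct-sum axiom with a complementary ``filler'' term supported on $q_i$; once that is done, the rest is a clean limiting argument that works uniformly in the varying $n_i \leq n$ because the filler vanishes in norm as $q_i \to 0$.
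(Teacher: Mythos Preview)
Your strategy---completing $\alpha_i x_i \alpha_i^\tran$ to a genuine nc convex combination by adding a filler $q_i y q_i$ on the complementary projection, then passing to the weak$^*$ limit---is the standard argument and is essentially what Davidson--Kennedy do in their Proposition 2.2.9 (which is all the paper cites here). Your construction of $z_i = \alpha_i x_i \alpha_i^\tran + q_i y q_i \in K_n$ is correct.

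The gap is in the limiting step. In this setting $n$ ranges over all cardinals up to $\kappa$, and the entire content of the proposition---in particular its immediate corollary that $K_n = L_n$ for all finite $n$ forces $K = L$---lies in the case of \emph{infinite} $n$. For finite $n$ your sentence ``since $M_n(\bR)$ is finite-dimensional, the convergence is in operator norm'' is literally true, but then norm convergence of projections $p_i \to 1_n$ forces $p_i = 1_n$ eventually (projections are norm-isolated), so $\alpha_i$ is eventually unitary and the statement is trivial without any filler at all. For infinite $n$, however, $M_n(\bR) = B(\ell_n^2)$ is not finite-dimensional and $p_i \to 1_n$ only weak$^*$ (equivalently strongly, as these are projections), never in norm; your claim that $q_i \to 0$ in norm is simply false there. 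What must be shown instead is that $q_i \to 0$ strongly still gives $q_i y q_i \to 0$ weak$^*$ in $M_n(E)$: viewing $y$ as a normal map $\psi \mapsto y(\psi)$ from $E_*$ into $M_n$, by uniform boundedness of the net $(q_i y q_i)$ it suffices to test against $\psi \in E_*$ and vectors $\xi, \eta \in \ell_n^2$, and then $\langle q_i \, y(\psi) \, q_i \, \xi, \eta \rangle = \langle y(\psi) \, q_i \xi, \, q_i \eta \rangle \to 0$ since $q_i \xi, q_i \eta \to 0$ and $y(\psi)$ is bounded. With this correction supplied, your argument goes through.
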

        \begin{proof}
            Same as in the complex case. See Proposition 2.2.9 in \cite{DK}. 
        \end{proof}
        This result implies as in  \cite[Proposition 16.4.5]{Dav} (or \cite[Proposition 2.2.10]{DK}) that
        
       \begin{prop} \label{nf} Suppose that $K$ and $L$ are closed nc convex set over a dual operator space $E$.  If $K_n = L_n$ for all $n < \infty$ then $K=L$.
        \end{prop}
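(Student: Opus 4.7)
The plan is to reduce the infinite-dimensional levels to the finite-dimensional ones using the preceding approximation proposition. By symmetry it suffices to show $K_n \subseteq L_n$ for every cardinal $n$ up to the ambient cardinal $\kappa$; the assumption handles finite $n$, so fix $n$ infinite and $x \in K_n$.

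First I would build an approximating net from compressions to finite-dimensional subspaces of $\mathbb{R}^n$. Let $\{F_i\}$ be the net of finite-dimensional subspaces of $\mathbb{R}^n$ directed by inclusion, with $\dim F_i = n_i < \infty$, and let $\alpha_i \in M_{n,n_i}(\mathbb{R})$ be the isometry onto $F_i$, so that $\alpha_i \alpha_i^{\tran} = p_i$ is the orthogonal projection onto $F_i$ and $p_i \to 1_n$ strongly (hence in the weak$^*$ topology of $M_n(E)$ when multiplied on either side of a bounded element).

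Next, by closure under compressions, $y_i := \alpha_i^{\tran} x \alpha_i \in K_{n_i}$. Since $n_i < \infty$, the hypothesis gives $y_i \in L_{n_i}$. I now apply the previous proposition to $L$ with the net $\{y_i\}$ and the isometries $\{\alpha_i\}$: we have $\alpha_i \alpha_i^{\tran} = p_i \to 1_n$ in weak$^*$, and $\alpha_i y_i \alpha_i^{\tran} = p_i x p_i$, which converges weak$^*$ to $x$ because multiplication by $p_i$ on either side is weak$^*$-continuous and the net $\{p_i x p_i\}$ is bounded by $\|x\|$. The proposition then yields $x \in L_n$, as desired.

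The only potential obstacle is the weak$^*$ convergence $p_i x p_i \to x$ in $M_n(E)$. Here I would use the identification $M_n(E) \cong CB(E_*, M_n)$ from the preliminaries: evaluating $p_i x p_i$ against any $\varphi \in E_*$ gives $p_i \, x(\varphi) \, p_i \in M_n$, which converges in the weak$^*$ topology of $M_n$ to $x(\varphi)$ because $p_i \to 1_n$ strongly and $x(\varphi)$ is a fixed bounded operator on $\ell^2_n$. Since the net is uniformly bounded, this pointwise convergence at the predual level is exactly weak$^*$ convergence in $M_n(E)$. Applying the symmetric argument exchanging $K$ and $L$ completes the proof.
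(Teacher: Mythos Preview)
Your proof is correct and is precisely the argument the paper has in mind: the paper does not write out a proof but simply notes that the result follows from the preceding approximation proposition ``as in \cite[Proposition 2.2.10]{DK},'' which is exactly the compression-to-finite-rank argument you give. Your justification of the weak$^*$ convergence $p_i x p_i \to x$ via $M_n(E) \cong CB(E_*, M_n)$ is the appropriate way to make this step rigorous in the present setting.
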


        The natural morphisms between real nc convex sets are {\em real nc affine functions}. These will be maps $\theta: K \to L$ between real nc convex sets which are graded, respect direct sums, and equivariant with respect to isometries. That is, for all $n$
        \begin{enumerate}
            \item $\theta(K_n) \subseteq L_n$,
            \item $\theta(\sum \alpha_i x_i \alpha_i^\tran) = \sum \alpha_i \theta(x_i) \alpha_i^\tran$ for all bounded families $\{x_i \in K_{n_i}\}$ and every family of isometries $\{\alpha_i \in M_{n,n_i}\}$ where $\sum \alpha_i \alpha_i^\tran = 1_n$,
            \item $\theta(\beta^\tran x \beta) = \beta^\tran \theta(x) \beta$ for every $x \in K_n$ and isometry $\beta \in M_{n,m}$.
        \end{enumerate}
    We say that     $\theta$ is continuous if $\theta|_{K_n}: K_n \to M_n(\bR)$ is continuous for every $n$. $A(K)$ is the space of all continuous affine nc functions from $K$ into $\cM(\bR)$.
        
        For $K,L$ classical convex sets and for $\varphi: K \to L$ a bijective function which is affine,  its inverse is easily seen to be affine. The same will be true in the non-commutative case. Indeed, $\varphi^{-1}$ is graded because for $l \in M_n(L)$ then $\varphi^{-1}(l) \in M_n(K)$, and for $y \in L_n$ and an isometry $\beta \in M_{n,m}$ then $\beta^\tran y \beta = \beta^\tran \varphi(\varphi^{-1}(y)) \beta = \varphi(\beta \varphi^{-1}(y) \beta)$. Taking $\varphi^{-1}$ of the left and right hand sides gives the result. A similar proof holds to show $(2)$.

        \begin{subsection}{Some relations between the real and complex case}

        \begin{lemma}\label{reisAffine}
                The function $\re: \cM(\bC) \to \cM(\bR)$  taking a complex matrix $A+iB$ (where $A,B \in M_n(\bR))$ to the real matrix $A$ is real affine and completely contractive. The same is true for the map $\im: \cM(\bC) \to \cM(\bR)$ taking $a+ib$ to $b$.
        \end{lemma}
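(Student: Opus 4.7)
The plan is to verify the three affineness axioms directly on the level of matrices, and then deduce contractivity from the standard embedding $M_n(\bC) \hookrightarrow M_{2n}(\bR)$ via the map $c(A,B) = \begin{bmatrix} A & -B \\ B & A \end{bmatrix}$ already introduced in the preliminaries. Nothing here is deep; the lemma is essentially a bookkeeping statement, so my task is mostly to check that the real/imaginary parts behave correctly under operations involving \emph{real} isometries (which is the reason $\cM(\bC)$ is being regarded as a real nc convex set).

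First I would unpack the axioms. Grading is immediate from the definition, since $\re$ sends $M_n(\bC)$ into $M_n(\bR)$. For a family $\{x_i = A_i + iB_i \in M_{n_i}(\bC)\}$ and real isometries $\alpha_i \in M_{n,n_i}(\bR)$ with $\sum \alpha_i \alpha_i^\tran = 1_n$, linearity of matrix multiplication gives
\[
\sum \alpha_i x_i \alpha_i^\tran = \sum \alpha_i A_i \alpha_i^\tran + i \sum \alpha_i B_i \alpha_i^\tran,
\]
and since $\sum \alpha_i A_i \alpha_i^\tran$ is real while $\sum \alpha_i B_i \alpha_i^\tran$ is real, applying $\re$ to the left side yields $\sum \alpha_i \re(x_i) \alpha_i^\tran$, as required. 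The compression axiom is the same one-line computation with a single real isometry $\beta \in M_{n,m}(\bR)$. The identical computation (reading off the coefficient of $i$ instead) handles $\im$. The crucial point that makes these manipulations transparent is that the isometries lie in $M_{n,n_i}(\bR)$ rather than $M_{n,n_i}(\bC)$; if complex isometries were allowed the statement would be false.

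For the contractivity claim, I would use the standard embedding $c : M_n(\bC) \to M_{2n}(\bR)$ sending $A+iB$ to $\begin{bmatrix} A & -B \\ B & A \end{bmatrix}$, which is a $*$-homomorphism of real $C^*$-algebras and in particular an isometry. Since $A$ occupies the $(1,1)$ corner of $c(A+iB)$, compressing by the isometry $\bigl[\begin{smallmatrix} 1_n \\ 0 \end{smallmatrix}\bigr]$ gives $\|\re(A+iB)\| = \|A\| \leq \|c(A+iB)\| = \|A+iB\|$, at every level $n$. The same corner argument (using the $(2,1)$ block) produces $\|\im(A+iB)\| \leq \|A+iB\|$. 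Thus both maps are completely contractive in the sense that they are contractive at every matrix level.

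The only step with any subtlety is confirming that the norm used on $M_n(\bC)$ as a real object agrees with its usual complex operator norm, which is immediate from the fact that $c$ is a $*$-isomorphism onto its image; so there is no genuine obstacle here, and the proof is really just a verification.
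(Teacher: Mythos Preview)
Your proof is correct and, for the affineness part, takes exactly the same direct-computation approach as the paper: verify grading, then check axioms (2) and (3) by expanding $\alpha_i(A_i+iB_i)\alpha_i^\tran$ (resp.\ $\beta^\tran(A+iB)\beta$) with \emph{real} $\alpha_i,\beta$ and reading off the real part. The paper does axiom (3) explicitly and says ``the same proof holds'' for (2) and for $\im$, just as you indicate.

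The one difference is that you actually supply an argument for complete contractivity, whereas the paper's written proof of this lemma does not. Your argument via the isometric $*$-embedding $c:M_n(\bC)\to M_{2n}(\bR)$ and corner compression is the natural one and is implicit in the paper's set-up (the map $c$ is exactly the tool used throughout Section~\ref{SecCompl}). A minor polish: for $\im$ it is slightly cleaner to note $\im(z)=\re(-iz)$ and use $\|-iz\|=\|z\|$, rather than extracting an off-diagonal block; but your $(2,1)$-block observation is also valid since any block of a matrix is a contraction of it.
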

        \begin{proof} The map 
            $\re$ is well defined and graded because it sends $N\times N$ matrices over the complex numbers to $N\times N$ matrices over the reals. 
           We leave it to the reader  that it is completely contractive. To show (3) in the definition of `affine', let $A+iB \in M_N(\bC)$ and $\beta \in M_{N,M}(\bR)$ be an isometry. Then we have
            \begin{align*}
                \re(\beta^\tran (A+iB) \beta)
                &= \re(\beta^\tran A \beta + i \beta^\tran B \beta)
                \\&= \beta^\tran A \beta = \beta^\tran \re(A+iB) \beta
            \end{align*}  The same proof holds to show condition (2) and for the imaginary part.
        \end{proof}
        
If $V = M_n(\bC)_{\rm sa}$  then $V_c$ may be identified 
(via a unital complex complete order isomorphism, and of course complete isometry) with a canonical subspace of
$M_ n(\bC) \oplus M_ n(\bC)$.   Namely 
if $x,y \in V$ then $z = x + iy$ in $V_c$ is   identified with $(z, z^T)  \in M_ n(\bC) \oplus M_ n(\bC)$. 
For a general complex operator system $W,$ if $V = W_{\rm sa}$ then the canonical complex linear map
$u : V_c \to W$ is  an isometric  and unital
 identification, since  $x + iy$ for $x, y \in V$ in both cases may be identified with $c(x,y) \in M_2(W)$.
  For a complex selfadjoint  operator space $E$, we claim that $(E_{\rm sa})_c = E^{\rm sym}$ completely isometrically.  Here $E^{\rm sym}$ is $E$ but with matrix norm $\| [x_{ij} ] \|_{\rm sym} = \max \{  \| [x_{ij} ] \| , \| [x_{ji} ] \| \}$.  Indeed 
$E_{\rm sa} \subset E^{\rm sym}$ and $E_{\rm sa} + i E_{\rm sa} = E^{\rm sym}$.   The map 
$x \mapsto x^*$ on $E^{\rm sym}$ is a period 2, conjugate linear, complete isometry with fixed points $E_{\rm sa}$.
So $(E_{\rm sa})_c = E^{\rm sym}$.   Moreover this identification is as operator systems if $E$ is an operator system.

\begin{lemma}\label{ccl}  For a complex operator system  $V$
the `identity map' taking 
$x + iy \in (V_{\rm sa})_c$ to $x + i y  \in V$, for $x, y \in V_{\rm sa}$, is ucp, and is a complex linear bijective isometric order isomorphism.            \end{lemma}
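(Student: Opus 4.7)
The plan is to identify the map of the lemma with the canonical complex linear map $u \colon (V_{\rm sa})_c \to V$ discussed in the paragraph preceding the statement (taking $W = V$ there and $V = V_{\rm sa}$). Complex linearity, bijectivity, and unitality are immediate from the decomposition $V = V_{\rm sa} + i V_{\rm sa}$ and from the fact that $u$ is just the identity on underlying sets. Isometry is essentially already recorded in the preceding paragraph via the shared identification $x + iy \leftrightarrow c(x,y) \in M_2(V)$, which yields the same norm whether $x + iy$ is viewed in $(V_{\rm sa})_c$ or in $V$.

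For ucp, I would invoke the claim proved just above the lemma, namely that $\Phi \colon z \mapsto (u(z), \overline{u(z)^*})$ is a unital complete order embedding from $(V_{\rm sa})_c$ into $V \oplus^\infty \bar V$. Since the coordinate projection $V \oplus^\infty \bar V \to V$, $(a,b) \mapsto a$, is a $*$-homomorphism and hence ucp, the composition with $\Phi$ is ucp. By construction this composition coincides with $u$, so $u$ is ucp.

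For the (first-level) order-isomorphism property, the forward direction is immediate from ucp. For the converse, suppose $u(z) \geq 0$ in $V$ with $z = x + iy$ and $x, y \in V_{\rm sa}$. Positivity of $u(z)$ in $V$ forces $u(z)$ to be selfadjoint, and combining this with $(x + iy)^* = x - iy$ on $V_{\rm sa} + i V_{\rm sa}$ gives $y = 0$. So $z = x$ with $x \in V_{\rm sa}$ and $x \geq 0$ in $V$. Unpacking the complexification positivity, $x + i0 \in (V_{\rm sa})_c^+$ is equivalent to $c(x,0) = \begin{bmatrix} x & 0 \\ 0 & x \end{bmatrix}$ being positive in $M_2(V_{\rm sa})$ (as a real operator system), and this holds because the positivity on the real operator system $V_{\rm sa}$ is the one inherited from $V$.

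The main obstacle is conceptual rather than technical: one must keep track of the fact that $(V_{\rm sa})_c$ and $V$ carry different complex operator system structures at higher matrix levels (which is why the lemma deliberately does not claim a complete order isomorphism), even though they agree at level one. All the heavy lifting on norms and matrix positivity is done in the paragraph above the lemma via the embedding into $V \oplus^\infty \bar V$, so once that is in hand the lemma reduces to careful bookkeeping of definitions.
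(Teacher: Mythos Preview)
Your proposal is correct and essentially the same as the paper's proof. The only cosmetic difference is in how ucp is obtained: the paper factors the identity map as the complexification of the inclusion $V_{\rm sa}\hookrightarrow V$ (a ucoe $(V_{\rm sa})_c\to V_c$) followed by the canonical complex quotient map $V_c\to V$ from \cite[Section~11]{BR}, whereas you use the explicit identification $V_c\cong V\oplus^\infty\bar V$ from the paragraph immediately preceding the lemma and project to the first coordinate. These two factorizations are the same map described in different language, and your treatment of isometry and the first-level order isomorphism via the shared identification $x+iy\leftrightarrow c(x,y)\in M_2(V)$ matches the paper's exactly.
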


            \begin{proof}  The complexification of the inclusion $V_{\rm sa} \to V$ is a 
canonical unital completely isometric complex map $(V_{\rm sa})_c \to V_c$.  If we compose this
with the canonical complex quotient map $V_c \to V$ (see e.g.\ the third paragraph of
\cite[Section 11]{BR}), we obtain a ucp map $(V_{\rm sa})_c \to V$.  This agrees with the `identity map'. It clearly is complex linear and bijective. To see that it is an  isometric order isomorphism  note that   $x + iy$ for $x, y \in V_{\rm sa}$ in both cases may be identified with $c(x,y) \in M_2(V)$.  \end{proof}

Note that the above is an order isomorphism, but not necessarily a complete order isomorphism.

\begin{lemma}\label{cspc}  For a complex operator system  $V$ the complex nc state space of $V$ is real nc affinely homeomorphic to the 
closed nc subset $\{ \varphi \in {\rm ncS}_{\bR}(V) : \varphi(i1) = 0 \}$ in the real nc state space, via the ‘real part' operation.  \end{lemma}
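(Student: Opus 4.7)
The plan is to exhibit an explicit inverse $\Psi$ to the real-part map $\Phi(\psi) = \re \circ \psi$, then verify bijectivity, real nc affineness, and continuity. For $\varphi$ in the target set, define $\Psi(\varphi)(v) = \varphi(v) - i\,\varphi(iv)$. The compositions $\Phi \circ \Psi = \mathrm{id}$ (immediate since $\varphi(v) \in M_n(\bR)$) and $\Psi \circ \Phi = \mathrm{id}$ (using $\re(i\,\psi(v)) = -\im \psi(v)$) yield the bijection once both maps are known to be well-defined. For well-definedness of $\Phi$, I would check that $\re : M_n(\bC) \to M_n(\bR)$ is completely positive: a self-adjoint positive $A+iB$ with $A,B \in M_n(\bR)$ has $A = A^\tran$ and $A \geq 0$, since $A$ is a principal block of the positive realification $\begin{bmatrix} A & -B \\ B & A \end{bmatrix}$. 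Combined with Lemma \ref{reisAffine} this shows $\Phi(\psi)$ is real ucp and nc affine, and $\Phi(\psi)(i\cdot 1) = \re(iI_n) = 0$. The target set is closed and nc convex in $\mathrm{ncS}_\bR(V)$ since weak$^*$ evaluation at $i\cdot 1$ is continuous and the condition $\varphi(i1) = 0$ is preserved under nc convex combinations.

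The main obstacle is verifying that $\Psi(\varphi)$ is complex ucp. Complex linearity follows from real-linearity of $\varphi$ alone, as $\Psi(\varphi)(iv) = \varphi(iv) + i\,\varphi(v) = i\,\Psi(\varphi)(v)$; unitality $\Psi(\varphi)(1) = I$ is precisely where the hypothesis $\varphi(i\cdot 1) = 0$ enters; and self-adjointness follows from $\varphi(v^*) = \varphi(v)^\tran$ and $\varphi(iv^*) = -\varphi(iv)^\tran$ (using $iv^* = -(iv)^*$) together with $(A-iB)^* = A^\tran + iB^\tran$ for real $A,B$. For complete positivity I would factor $\Psi(\varphi) = \varphi_c \circ \iota$, where $\varphi_c : (V_r)_c \to (M_n(\bR))_c = M_n(\bC)$ is the complex ucp complexification of $\varphi$ and $\iota : V \to (V_r)_c$ is defined by $\iota(v) = v - j\cdot(iv)$. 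Here $j$ denotes the new complex unit of $(V_r)_c$, distinguished from the original $i$ on $V$. A direct calculation $\iota(iv) = iv + jv = j\,\iota(v)$ shows $\iota$ is complex linear, and $\varphi_c(\iota(v)) = \varphi(v) - i\,\varphi(iv) = \Psi(\varphi)(v)$.

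To show $\iota$ is complex cp, I would apply the defining criterion of the matrix cone on $(V_r)_c$ from Section \ref{SecPreliminaries}: $\iota^{(m)}(X) = X - j(iX) \in M_m((V_r)_c)^+$ if and only if
\[ c(X, -iX) = \begin{bmatrix} X & iX \\ -iX & X \end{bmatrix} = \begin{bmatrix} 1 & i \\ -i & 1 \end{bmatrix} \otimes X \;\geq\; 0 \quad \text{in } M_{2m}(V_r). \]
The scalar matrix $\begin{bmatrix} 1 & i \\ -i & 1 \end{bmatrix}$ has eigenvalues $0,2$ and is therefore positive, so its Kronecker product with any $X \geq 0$ is positive; and the positive cones of $M_{2m}(V)$ as a complex and as a real operator system coincide as sets, since for complex-linear operators on $H$ self-adjointness and positivity are preserved under realification. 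Hence $\iota$ is complex cp, so $\Psi(\varphi) = \varphi_c \circ \iota$ is complex cp, and thus complex ucp.

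The remaining verifications are routine: both $\Phi$ and $\Psi$ preserve gradings, direct sums, and isometric compressions directly from their formulas, and point-weak$^*$ continuity in each direction is immediate since $\re$ and pointwise evaluation both commute with weak$^*$ limits. The crux of the argument is the positivity step above, specifically the factorization through $(V_r)_c$ and the key identity $c(X,-iX) = \begin{bmatrix} 1 & i \\ -i & 1 \end{bmatrix} \otimes X$.
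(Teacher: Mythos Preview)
Your proof is correct and follows essentially the same route as the paper: both define the inverse as $\varphi\mapsto(v\mapsto \varphi(v)-i\,\varphi(iv))$ and obtain its complete positivity by factoring through $\varphi_c$ composed with a complex-linear cp inclusion $V\to (V_r)_c$. The only difference is packaging: the paper writes this factorization as $2(\varphi_c\circ j)$ and defers the positivity of the inclusion $j$ to an external reference, whereas you construct $\iota(v)=v-j\cdot(iv)$ explicitly and verify its complete positivity via the identity $c(X,-iX)=\bigl[\begin{smallmatrix}1&i\\-i&1\end{smallmatrix}\bigr]\otimes X\geq 0$, making your argument self-contained.
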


            \begin{proof} Since $\re$ is completely contractive and nc affine by  Lemma \ref{reisAffine}, $\varphi \mapsto \re  \circ \varphi$ is a continuous nc affine map ncS$_{\bC}(V) \to {\rm ncS}_{\bR}(V)$.  
                         Conversely, we define a 
             map $\epsilon : {\rm ncS}_{\bR}(V) \to {\rm ncS}_{\bC}(V)$ 
            by  $$\epsilon(\varphi)(x) = 
            \varphi(x) - i \varphi(ix) = 2 (\varphi_c \circ j)(x), \qquad \varphi \in {\rm ncS}_{\bR}(V) x \in V,$$ where 
            $j : V \to V_c$ is the canonical complex linear inclusion (discussed e.g.\ in the third paragraph of \cite[Section 11]{BR}).
            It is easy to check that $\epsilon(\varphi)$ is selfadjoint and completely positive (since $j$ and $\varphi_c$ are),  and that $\re  \circ  \epsilon(\varphi) = \varphi$.  So  if $\varphi(i1) = 0$ then $\epsilon(\varphi) \in {\rm ncS}_{\bC}(V)$.  
            Clearly $\epsilon$ is a continuous nc affine map ncS$_{\bC}(V) \to {\rm ncS}_{\bR}(V)$.  
            \end{proof}
      \end{subsection}

        \begin{subsection}{Affine maps as an operator system}\label{AffAreOpSys}
        Let $K$ be a real compact nc convex set and let $A(K)$ be the collection of continuous real nc affine maps from $K$ to $\cM(\bR)$. As in the complex case this is a $*$-vector space with adjoint given by $f^*(k) = f(k)^\tran$ for $f \in A(K)$ and $k \in K$. 
        We identify $M_n(A(K))$ and $A(K,M_n(\bR))$.  We define  the positive cone $M_n(A(K))^+$ by saying $[f_{ij}] \in M_n(A(K))$ is positive if and only if $[f_{ij}(k)]$ is positive for all $k \in K$. 
        
        The matrix order unit  will be the constant function $1$ which sends everything in $K$ to the corresponding identity in $\cM(\bR)$. This is a matrix order unit because an element $a$ of $M_n(A(K))_{\rm sa}$ will be bounded by a number $c < \infty$ by the proof of \cite[Proposition 2.5.3]{DK} 
        (Remark 16.6.2 (2) in \cite{Dav}), which is the same in the real case.  Hence $-c I \leq a(k) \leq c I$
        for each $k \in K$, so that $c1 - a \geq 0$.         Suppose that for some $n \in \bN$ and $f \in M_n(A(K))_{\rm sa}$ we have $f+\epsilon 1_n \geq 0$ for all $\epsilon$.  Evaluating at  $k \in K$ we get $f(k) + \epsilon I \geq 0$. Taking $\epsilon$ to zero shows that $f(k) \geq 0$ for all $k$, and so $f$ is positive. Therefore, $1$ is an archimedian matrix order unit and $A(K)$ is a real operator system.

                    \bigskip

    {\bf Remark.}
  Because $M_n(\bR) \subseteq M_n(\bC)$, every complex nc convex set $K \subseteq \bigsqcup M_n(E)$ can be regarded as a real nc convex set $K \subseteq \bigsqcup M_n(E_r)$, where  $E_r$ is $E$ regarded as a real vector space.  
            Note that in this case complex affine functions with domain $K$ are real affine.
            We saw that the real nc affine functions on $K$ are a real operator system, and it contains the  complex affine functions as a real subsystem.

               \end{subsection}

        \section{Complexification}\label{SecCompl}
            Given an operator space $E$, we define its complexification $E_c$ to have matrix norms $M_n(E_c)$ inherited from the embedding $c: M_n(E_c) \to M_{2n}(E)$
            \[c : [x_{nm}+iy_{nm}] \mapsto \begin{bmatrix}[x_{nm}] & -[y_{nm}] \\ [y_{nm}] & [x_{nm}] \end{bmatrix}\]
            If $E$ is a dual space then $E_c$ will be too because $E_c = ((E_*)^*)_c = ((E_*)_c)^*$, and then it is easy to see that $c$ is a bicontinuous embedding for the weak* topologies. 
            
            Let $K$ be a real nc convex subset of $E$. Define the {\em complexification of K} as the set $K_c \subseteq \bigsqcup M_n(E_c)$ by $[z_{ij}] \in (K_c)_n$ if and only if $c([z_{ij}]) \in 
            K_{2n}$.

            \begin{theorem}\label{ncrealaffinecoml}
                Given a real nc convex set $K \subseteq \bigsqcup E_n$, the complexification $K_c \subseteq \bigsqcup (E_c)_n$ is a complex nc convex set with $K$ canonically embedded in $K_c$ via a real continuous nc affine map $\iota$.  We have that $K_c = {\rm co}_{\bC}(\iota(K))$, the noncommutative convex hull. 
                Also, 
                $x + iy \in K_c$ if and only if $x-iy \in K_c$, for $x, y \in M_n(E)$. 
                Moreover if $E$ is a dual operator space
                then  $K$ is closed (resp.\ compact) if and only if $K_c$ is closed (resp.\ compact). 
            \end{theorem}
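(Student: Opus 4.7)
The plan is to reduce every structural claim about $K_c$ to the corresponding real statement about $K$ via the embedding $c : M_n(E_c) \to M_{2n}(E)$. The crucial algebraic identity, which I would verify by direct computation from the definition of $c$, is that for any scalar matrix $\alpha \in M_{n_i,n}(\bC)$ and any $z \in M_{n_i}(E_c)$,
\[ c(\alpha^* z \alpha) = c(\alpha)^{\tran} c(z) c(\alpha), \qquad c(\alpha^*\alpha) = c(\alpha)^{\tran} c(\alpha). \]
In particular $c$ takes complex isometries in $M_{n_i,n}(\bC)$ to real isometries in $M_{2n_i,2n}(\bR)$ and sends $1_n$ to $1_{2n}$.

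From this, complex nc convexity of $K_c$ is immediate: given $z_i \in (K_c)_{n_i}$ and complex isometries $\alpha_i \in M_{n_i,n}(\bC)$ with $\sum_i \alpha_i^*\alpha_i = 1_n$, one has $c\bigl(\sum_i \alpha_i^* z_i \alpha_i\bigr) = \sum_i c(\alpha_i)^{\tran} c(z_i) c(\alpha_i)$, which lies in $K_{2n}$ by real nc convexity. The map $\iota(x) = x+i0$ is visibly continuous and nc affine in the real sense, and lands in $K_c$ because $c(\iota(x)) = x \oplus x \in K_{2n}$ by closure of $K$ under real direct sums.

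For $K_c = {\rm co}_{\bC}(\iota(K))$, the inclusion ${\rm co}_{\bC}(\iota(K)) \subseteq K_c$ is automatic. For the reverse, given $z \in (K_c)_n$, I would use the complex isometry $u_n \in M_{2n,n}(\bC)$ from the preliminaries; a short block matrix computation gives $u_n^* c(z) u_n = z$, so $z = u_n^* \iota(c(z)) u_n$ displays $z$ as a complex compression of $\iota(c(z)) \in \iota(K_{2n})$. The symmetry $x+iy \in K_c \iff x-iy \in K_c$ then falls out by observing that the orthogonal matrix $D := {\rm diag}(1_n, -1_n) \in M_{2n}(\bR)$ conjugates $c(x+iy)$ to $c(x-iy)$, so the assertion is just closure of $K_{2n}$ under a real unitary compression.

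For the topological assertions, the text has already noted that $c$ is a weak$^*$ bicontinuous embedding of $M_n(E_c)$ onto a weak$^*$ closed subspace of $M_{2n}(E)$. So $(K_c)_n$ is weak$^*$ closed/compact exactly when $K_{2n} \cap c(M_n(E_c))$ is, giving the forward direction; conversely, $\iota(K_n) = (K_c)_n \cap \{z : \im z = 0\}$ is weak$^*$ closed in $(K_c)_n$ and $\iota$ is a weak$^*$ homeomorphism onto its image. The main point requiring care will be the compression identity $u_n^* c(z) u_n = z$ and the sign bookkeeping inside $c$; beyond those, the argument is essentially formal once the intertwining identities for $c$ are in hand.
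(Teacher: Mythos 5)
Your proposal is correct and follows essentially the same route as the paper's proof: the intertwining identities $c(\alpha z)=c(\alpha)c(z)$, $c(\alpha^*)=c(\alpha)^\tran$ give nc convexity, the compression $u_n^*c(z)u_n=z$ gives $K_c={\rm co}_{\bC}(\iota(K))$, and conjugation by ${\rm diag}(1_n,-1_n)$ gives the symmetry under $x+iy\mapsto x-iy$. Your topological step is phrased via the weak$^*$-closed range of $c$ and the identification $\iota(K_n)=(K_c)_n\cap\{\im z=0\}$ rather than the paper's net argument using weak$^*$-continuity of the real and imaginary parts, but this is the same content in slightly different clothing.
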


            \begin{proof}
                Clearly $K_c$ is graded. To show (2) and (3) we need the map $c$ to behave well. Specifically, if $[x_{nm} + y_{nm}] \in (K_c)_{N}$ and $[a_{nm} + i b_{nm}] \in M_{K,N}(\bC)$ where $a_{nm}, b_{nm} \in \bR$, then
            \begin{align*}
                c([a_{nm} + i b_{nm}][x_{nm} + i y_{nm}]) 
                &= c([\sum_k a_{nk} x_{km} - b_{nk} y_{km} + i b_{nk} x_{km} + i a_{nk} y_{km}])
                \\&= \sum_k \begin{bmatrix}
                    [a_{nk} x_{km} - b_{nk} y_{km}]  & -[b_{nk}x_{km} + a_{nk} y_{km}]\\
                    [b_{nk}x_{km} + a_{nk} y_{km}] & [a_{nk} x_{km} - b_{nk} y_{km}]
                \end{bmatrix}
                \\&= \begin{bmatrix}
                    [a_{nm}] & - [b_{nm}] \\
                    [b_{nm}] & [a_{nm}]
                \end{bmatrix}
                \begin{bmatrix}
                   [ x_{nm}] & -[y_{nm}]\\ [y_{nm}] & [x_{nm}]
                \end{bmatrix}
                \\&= c([a_{nm} + i b_{nm}]) c([x_{nm} + iy_{nm}]).
            \end{align*}
            We also have
            \begin{align*}
                c([a_{nm} + i b_{nm}]^*) 
                &= c([a_{mn} - i b_{mn}])
                \\&=  \begin{bmatrix}
                    [a_{mn}] & [b_{mn}] \\
                    -[b_{mn}] & [a_{mn}]
                \end{bmatrix}
                \\&= \begin{bmatrix}
                    [a_{nm}]^\tran & [b_{nm}]^\tran \\
                    -[b_{nm}]^\tran & [a_{nm}]^\tran
                \end{bmatrix}
                \\&= c([a_{nm} + i b_{nm}])^\tran
            \end{align*}
            Let $x_i \in (K_c)_{n_i}$ and $\alpha_i \in M_{n,n_i}(\bC)$ be a family of isometries such that $\sum \alpha_i \alpha_i^* = 1_n$. Then we have $c(\sum \alpha_i x_i \alpha_i^*) = \sum c(\alpha_i) c(x_i) c(\alpha_i)^\tran$ where $c(\alpha_i)$ will be a family of real isometries such that $c(\alpha_i) c(\alpha_i)^\tran$ sum to 1. So, $c(\sum \alpha_i x_i \alpha_i^*) \in K_{2n}$ which means $\sum \alpha_i x_i \alpha_i^* \in (K_c)_n$.  This verifies condition (2). A similar proof works for condition (3) showing $K_c$ is a complex noncommutative convex set. Therefore, the complexification of a real nc convex set is a complex nc convex set.

            The map $\iota: K \hookrightarrow K_c$ taking $[x_{nm}] \mapsto [x_{nm} + i \, 0]$ is a real continuous nc affine map. Indeed, it is graded and satisfies properties (2) and (3) in the affine map definition because $\iota(\beta^\tran x \beta) = \beta^\tran \iota(x) \beta$, where the latter are viewed as elements of $K_c$. This map is well-defined because if $x \in K$ then $c(x+i0) = x \oplus x \in K$ and therefore $x+i0 \in K_c$.  It is also continuous at each level. 

            Clearly 
            ${\rm co}_{\bC}(\iota(K)) \subseteq K_c$  since $K_c$ is convex and contains $\iota(K)$.  For the reverse inequality, if $x + iy \in K_c$ then $x  + iy = u^* i_{2n}(c(x,y)) u$ where $u$ is the 
                        usual isometry $\frac{1}{\sqrt{2}} [ I_n \;  -i I_n ]^\tran$.  This is a nc convex combination of an element from $\iota(K)$.

            The first `if and only if'
             is clear from the definitions, the nc convexity, and the fact that $c(x,-y) = w c(x,y) w$ where $w$ is the selfadjoint unitary $I \oplus (-I)$.

            Finally, suppose that  $E$ is a dual real operator space and each $K_n$ is closed in the weak* topology.  Suppose that $(x_t + i y_t)$ is a net in $(K_c)_n$ with
            weak* limit $x+iy$ in $M_n(E_c) \cong M_n(E)_c$.  
            Then $x_t \to x$ and $y_t \to y$ weak* (see \cite[Lemma 5.2]{BReal} and its proof).  So $(c(x_t,y_t))$ is a net in $K_{2n}$ with 
            weak* limit $c(x,y)$.  Thus  $c(x,y) \in K_{2n}$ and $x+iy \in (K_c)_n$. Thus $K_c$ is closed.
            A similar argument works for  compactness. 
            The converse is easier.  (E.g.\ suppose that $(x_t)$ is a net in $K_n$ with
            weak* limit $x$.  Then $(\iota(x_t))$ is a net
            in $(K_c)_n$, and $\iota(x_t) \to x$ weak* in $M_n(E_c)$.)
            \end{proof}

        {\bf Remarks.}  1)\ Similar considerations show that if $K_c$ is nc convex then so is $K$. Define $r : K_c \to K$ by $r(x+iy) = x$.   By simple calculations (similar to the last proof and the proof of Lemma \ref{reisAffine}) this is continuous  and real nc affine, and $r \circ \iota = I_K$.  
        
             \smallskip 
             
        2)\  If $E$ is a dual operator space
                then $c : K_c \to K$ 
                is a bicontinuous embedding satisfying (2) and (3) in the definition of an affine function.

             \medskip 
             
 We may thus define $\theta_K : K_c \to K_c$ as the restriction of the canonical period 2 automorphism $\theta_E$ of $E_c$ taking $x + iy \to x-iy$. Then $\theta_K$  is easily seen to be a period 2 real nc affine homeomorphism of $K_c$ whose fixed points are $K$.  Conversely if $C$ is a complex nc convex set in $E_c$ possessing a period 2 real nc affine homeomorphism, then the set $K$ of its fixed points is easily seen to be a real nc convex set.  
 
    \begin{lemma}\label{Affineext} For real  nc convex sets $K, L$, every real  nc affine map $f : K \to L$ has a unique complex  nc affine extension $f_c : K_c \to L_c$. 
            If $L$ is complex nc convex there is a unique complex  nc affine extension $K_c \to L$. These extensions are continuous if $f$ is continuous.

            In particular, every real  nc affine  isomorphism (resp.\ homeomorphism) $f : K \to L$ has a unique  complex  nc affine bijective (resp.\ homeomorphism) extension $f_c : K_c \to L_c$.
            \end{lemma}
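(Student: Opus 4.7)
The plan hinges on the formula from the proof of Theorem \ref{ncrealaffinecoml}: every $x+iy \in (K_c)_n$ is a compression
\[ x+iy = u_n^* \, \iota(c(x,y)) \, u_n, \]
where $u_n = \frac{1}{\sqrt{2}}\begin{bmatrix} iI_n \\ I_n \end{bmatrix}$ is a complex isometry and $c(x,y) \in K_{2n}$. This immediately yields uniqueness: any complex nc affine $g : K_c \to L_c$ extending $f$ (i.e.\ $g \circ \iota = \iota \circ f$) must satisfy
\[ g(x+iy) = g(u_n^*\, \iota(c(x,y))\, u_n) = u_n^* \, g(\iota(c(x,y))) \, u_n = u_n^* \, \iota(f(c(x,y))) \, u_n, \]
so this expression is forced, and I take it as the definition of $f_c(x+iy)$.

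It remains to verify that $f_c$ so defined is really a continuous complex nc affine extension. Well-definedness and landing in the right level are automatic: $c(x,y) \in K_{2n}$ gives $f(c(x,y)) \in L_{2n}$, hence $\iota(f(c(x,y))) \in (L_c)_{2n}$, and compressing by the complex isometry $u_n$ lands in $(L_c)_n$ by the complex nc convexity of $L_c$ established in Theorem \ref{ncrealaffinecoml}. The extension property $f_c \circ \iota = \iota \circ f$ reduces to the identities $c(x,0) = x \oplus x$, $f(x \oplus x) = f(x) \oplus f(x)$, and $u_n^*(w \oplus w) u_n = w$. The central step is complex nc affinity, whose workhorse is the identity
\[ c(\beta) \, u_m = u_n \, \beta \qquad (\beta \in M_{n,m}(\bC)), \]
a two-line check obtained by splitting $\beta = a + ib$. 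Combined with the multiplicativity $c(AB) = c(A)c(B)$ and $c(A^*) = c(A)^\tran$ from Theorem \ref{ncrealaffinecoml}, this translates a complex compression $\beta^*(x+iy)\beta$ on $K_c$ into a real compression $c(\beta)^\tran c(x,y) c(\beta)$ of $c(x,y)$ on $K$, which $f$ respects. Direct sums are handled the same way. Continuity is inherited from that of $c$, $\iota$, $f$, and compression by $u_n$.

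For the variant where $L$ is already complex nc convex, I use the same definition with the outer $\iota$ omitted, namely $f_c(x+iy) = u_n^* f(c(x,y)) u_n$, where the compression takes place directly in $L$; all arguments go through verbatim. For the isomorphism case, applying the construction to $f$ and to $g = f^{-1}$ yields complex nc affine extensions $f_c$ and $g_c$; then $g_c \circ f_c$ is a complex nc affine extension of $g \circ f = I_K$, so by uniqueness $g_c \circ f_c = I_{K_c}$, and symmetrically $f_c \circ g_c = I_{L_c}$. The homeomorphism case is a free consequence of the continuity clause already proved.

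The main obstacle I anticipate is the complex nc affinity calculation, essentially the intertwining identity $c(\beta) u_m = u_n \beta$ together with the bookkeeping that threads it through $c$, $f$, and $\iota$ so that a complex compression on $K_c$ matches the corresponding compression on $L_c$. Once that algebraic bridge between the real and complex compression operations is established, the remainder of the lemma unfolds mechanically.
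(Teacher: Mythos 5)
Your proposal is correct and follows essentially the same route as the paper: you define $f_c(x+iy)$ by the same compression formula $u_n^*\,f(c(x,y))\,u_n$ (with the embedding into $L_c$ made explicit), prove nc affinity via the same intertwining identity $c(\beta)u = u\beta$ together with multiplicativity of $c$, and obtain uniqueness, continuity, and the isomorphism case exactly as in the paper's proof.
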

            \begin{proof} Define $f_c(x+iy) = u^* f(c(x,y)) u$ if $x + iy \in (K_c)_n$, 
            where $u = u_n$ is the isometry above ($u_n = 1/\sqrt{2} [  I_n \; -i I_n ]^\tran$).   Note that $c(\beta) u_n = u_m \beta$ for $\beta \in M_{m,n}(\bC)$. 
            Then $$f_c(\beta^* (x+iy) \beta) = 
            u^* f(c(\beta^* (x+iy) \beta) u = u^* f(c(\beta^*) c(x,y) c(\beta)) u.$$ 
            Since $c(\beta^*) = c(\beta)^\tran$, and $c(\beta) u = u \beta$, we obtain $f_c(\beta^* (x+iy) \beta) = \beta^* f_c(x+iy) \beta$.  So $f_c$ is affine. 
            A similar argument works if $L$ is complex  nc affine.
            If $f$ is continuous and $x_n + i y_n \to x + iy$ in $(K_c)_n$ then $x_n \to x, y_n \to y, c(x_n,y_n) \to c(x,y)$.  So it is clear from the formula at the start of the proof that $f_c(x_n+iy_n) \to f_c(x+iy)$, hence $f_c$ is continuous.

            The uniqueness is clear from the above and the relation $f_c(x+iy) = f_c(u^* c(x,y) u)$.  The isomorphism case evidently follows.
            \end{proof}

         \begin{lemma}\label{Affineemb}   If $f : K \to L$ is  a one-to-one  continuous nc affine map between closed nc convex sets, and if $K$ is compact, then  $f_n$ is a homeomorphism onto its (compact) range for all $n$, and $f(K)$ is a compact nc convex set.
    \end{lemma}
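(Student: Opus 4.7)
The plan is to break the statement into two routine topological/algebraic pieces and handle each separately, exploiting the fact that compactness and closedness of nc convex sets are defined levelwise.

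First I would unpack the topology. By the definition of compact/closed nc convex set recalled earlier, $K$ compact means each $K_n$ is weak$^*$-compact in $M_n(E)$, and $L$ closed means each $L_m$ is weak$^*$-closed in $M_m(E')$, hence in particular Hausdorff. Continuity of $f$ means $f_n : K_n \to M_n(E')$ is continuous for each $n$; one-to-oneness of $f$ forces $f_n$ to be one-to-one on each level. So $f_n$ is a continuous injection from a compact space to a Hausdorff space, and the standard point-set fact (a continuous bijection from a compact space to a Hausdorff space is a homeomorphism) gives that $f_n$ is a homeomorphism onto $f_n(K_n)$, which is compact.

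Next I would show that $f(K) = \bigsqcup_n f_n(K_n)$ is a closed nc convex set. Closedness on each level is immediate since compact subsets of the Hausdorff space $L_n$ are closed. For nc convexity, take a bounded family $\{y_i \in f(K)_{n_i}\}$ and a family of isometries $\{\alpha_i \in M_{n,n_i}\}$ with $\sum \alpha_i \alpha_i^{\tran} = 1_n$. Writing $y_i = f_{n_i}(x_i)$ with $x_i \in K_{n_i}$, the family $\{x_i\}$ is bounded (indeed $K_{n_i}$ is compact, and a priori uniformly bounded as $K$ sits inside a fixed weak$^*$-compact set via the definition); since $K$ is nc convex we have $\sum \alpha_i x_i \alpha_i^{\tran} \in K_n$, and applying $f$, which is nc affine, gives
\[
\sum \alpha_i y_i \alpha_i^{\tran} \;=\; \sum \alpha_i f(x_i) \alpha_i^{\tran} \;=\; f\bigl(\sum \alpha_i x_i \alpha_i^{\tran}\bigr) \;\in\; f(K)_n .
\]
The same calculation with a single isometry $\beta \in M_{n,m}$ proves closure under compressions. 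Thus $f(K)$ satisfies (1)--(3) in the definition of a real nc convex set, and it is compact since each level is.

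The only genuine subtlety is the boundedness needed for condition (2): one must ensure that arbitrary bounded families of $y_i$'s pull back to bounded families of $x_i$'s so that the nc convex combination in $K$ is defined. This is automatic here because $K$ is compact, so each $K_{n_i}$ sits in a common bounded region (the weak$^*$-compact set $K$ is norm-bounded by Banach--Steinhaus / Alaoglu considerations), making any family in $\bigsqcup_n K_n$ automatically bounded. Apart from this bookkeeping, the proof is a direct application of the compact-to-Hausdorff principle together with the affineness of $f$.
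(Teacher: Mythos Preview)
Your argument is correct and its core coincides with the paper's: the paper's entire proof is the single sentence ``A continuous one-to-one map on a compact space is a homeomorphism onto its compact range,'' which is exactly your first step. You supply the explicit verification that $f(K)$ is nc convex, which the paper leaves implicit.

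One minor imprecision: your justification of the boundedness subtlety via ``the weak$^*$-compact set $K$ is norm-bounded by Banach--Steinhaus / Alaoglu considerations'' is loosely phrased, since $K$ is a disjoint union over all levels rather than a single weak$^*$-compact set, and levelwise compactness gives a bound on each $K_n$ but not a priori a uniform one across $n$. In practice this is not an issue here (and the paper does not address it either), but if you want to be fully rigorous you should either invoke the uniform boundedness of compact nc convex sets established elsewhere in the Davidson--Kennedy framework, or simply note that the affineness identity $f(\sum \alpha_i x_i \alpha_i^{\tran}) = \sum \alpha_i f(x_i)\alpha_i^{\tran}$ already holds whenever the left side is defined in $K$, so one only needs closure of $K$ under the nc convex combination of the $x_i$, which is part of the hypothesis.
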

            
            \begin{proof}  A continuous one-to-one map on a compact space is a homeomorphism onto its compact range. \end{proof}

        We call $f$ in the last result a {\em nc topological affine embedding}.  Note that if $f$ is a real nc topological affine embedding, then  $f_c$ is one-to-one and is a complex nc topological affine embedding.

We say that a  complex   compact nc convex set $L$ is an (abstract) {\em reasonable complexification} of a 
 real compact nc convex set $K$ if it 
(or more properly, $(L, \epsilon_L)$) satisfies any one of the equivalent conditions in the next result.
         
 \begin{thm} \label{unco}  Let $\epsilon = \epsilon_L : K \to L$ be a real nc  affine topological  embedding
 from a  real compact nc convex set  to a complex compact nc convex set, with $$L_n = \{ u^* \epsilon_L(c(x,y)) u : c(x,y) \in K_{2n} \}, $$ for each $n$.   (That is,  $L_n$ consists of the $x + iy$ for $c(x,y) \in K_{2n}$.)  
 The following statements are equivalent:
    \begin{enumerate}
        \item [{\rm (1)}] $L$ is a complex compact nc convex set in a complex space $F$, and  $F$ has a real subspace $Y$      with $Y \cap iY = 0$ such that  $\epsilon_L(K) \subset Y$.
        \item [{\rm (2)}]   The map  $$u^* \epsilon_L(c(x,y)) u \mapsto  x$$
      is well defined on $L$.        I.e.\ the `real part function' on $L$ is well defined. 
         \item [{\rm (3)}] The map $\theta_L : L \to L$ taking
       $$u^* \epsilon_L(c(x,y)) u \mapsto  u^* \epsilon_L(c(x,-y)) u, \qquad c(x,y) \in K_{2n}, $$
       is well defined.
          \item [{\rm (4)}]  The map $\theta_L$ is a  well defined period 2 real nc affine
        homeomorphism with  fixed point set   $\epsilon_L(K)$.
    \end{enumerate}  
    Up to real nc affine    homeomorphism there is a unique $L$ satisfying these conditions.
    That is, $K$ has a unique reasonable complexification.
    \end{thm}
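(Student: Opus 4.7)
The plan is to pass through a canonical map $\Phi = (\epsilon_L)_c : K_c \to L$. By Lemma \ref{Affineext}, the real nc affine map $\epsilon_L$ extends uniquely to a continuous complex nc affine map $\Phi$ with the explicit formula $\Phi(x+iy) = u^*\epsilon_L(c(x,y))u$, and the hypothesis on $L_n$ is exactly the assertion that $\Phi$ is surjective. Following Section \ref{AffAreOpSys} (and ultimately the categorical duality of Section \ref{SecCatDuality}), $\epsilon_L$ arises from a continuous real linear map $\tilde\epsilon_L : E \to F$ between the ambient operator spaces, so that $\epsilon_L(c(x,y)) = c(\tilde\epsilon_L(x), \tilde\epsilon_L(y))$ and hence $\Phi(x+iy) = \tilde\epsilon_L(x) + i\,\tilde\epsilon_L(y)$ inside $M_n(F)$. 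The strategy is to show each of (1)--(4) is equivalent to $\Phi$ being injective, which (by compactness of $K_c$) promotes $\Phi$ to a real nc affine homeomorphism, and then to read off uniqueness.

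For $(1) \Rightarrow (2)$: assuming $\epsilon_L(K) \subset Y$ with $Y \cap iY = 0$, we also have $M_n(Y) \cap iM_n(Y) = 0$, so from two presentations $\tilde\epsilon_L(x_1) + i\tilde\epsilon_L(y_1) = \tilde\epsilon_L(x_2) + i\tilde\epsilon_L(y_2)$ we get $\tilde\epsilon_L(x_1) = \tilde\epsilon_L(x_2)$, and injectivity of $\epsilon_L$ forces $x_1 = x_2$. For $(2) \Rightarrow (3)$: given the real parts agree, subtracting yields $\tilde\epsilon_L(y_1) = \tilde\epsilon_L(y_2)$ and so $\tilde\epsilon_L(x_1) - i\tilde\epsilon_L(y_1) = \tilde\epsilon_L(x_2) - i\tilde\epsilon_L(y_2)$, which is exactly well-definedness of $\theta_L$. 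Both (2) and (3) translate into injectivity of $\Phi$.

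For $(3) \Rightarrow (4)$: injectivity of $\Phi$ combined with its continuity and the compactness of $K_c$ gives a complex nc affine homeomorphism at each level. We then transport the canonical period 2 real nc affine homeomorphism $\theta_{K_c} : x+iy \mapsto x-iy$ of $K_c$ (whose fixed point set is $\iota(K)$, by Theorem \ref{ncrealaffinecoml} and the remark following it), setting $\theta_L := \Phi \circ \theta_{K_c} \circ \Phi^{-1}$; comparing formulas shows this is the $\theta_L$ of (3), it inherits being a period 2 real nc affine homeomorphism, and its fixed point set is $\Phi(\iota(K)) = \epsilon_L(K)$. For $(4) \Rightarrow (1)$: the bijection $\Phi$ identifies $L$ with $K_c \subset \cM(E_c)$, and then $F := E_c$ with real subspace $Y := E$ satisfies $Y \cap iY = 0$ and $\iota(K) = \epsilon_L(K) \subset Y$. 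Uniqueness of $L$ up to real nc affine homeomorphism follows: for any two $L, L'$ satisfying the conditions, both $\Phi : K_c \to L$ and $\Phi' : K_c \to L'$ are canonical, and $\Phi' \circ \Phi^{-1} : L \to L'$ furnishes the desired homeomorphism.

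The main technical obstacle lies in justifying the passage from the real nc affine $\epsilon_L$ to a real linear extension $\tilde\epsilon_L : E \to F$ satisfying $\epsilon_L(c(x,y)) = c(\tilde\epsilon_L(x), \tilde\epsilon_L(y))$, which underwrites the clean identity $\Phi(x+iy) = \tilde\epsilon_L(x) + i\,\tilde\epsilon_L(y)$ used throughout. With that in hand the decomposition arguments in $(1) \Leftrightarrow (2) \Leftrightarrow (3)$ are routine; without it, one would have to redo these implications by manipulating compressions of $\epsilon_L(c(x,y))$ against the complex isometries $u_n$ and $\bar u_n$ directly inside $L_{2n}$, which is feasible but substantially less transparent.
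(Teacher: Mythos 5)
Your overall skeleton is the same as the paper's: pass to the canonical extension $\Phi=\widetilde{\epsilon_L}:K_c\to L$ of Lemma \ref{Affineext}, observe that the hypothesis on $L_n$ is surjectivity of $\Phi$, show each of (1)--(4) is equivalent to injectivity of $\Phi$, and get uniqueness from $L\cong K_c$ via Lemma \ref{Affineemb}. But the step on which everything else in your argument rests is not proved, and you have flagged it yourself: the claim that $\epsilon_L$ ``arises from a continuous real linear map $\tilde\epsilon_L:E\to F$ between the ambient operator spaces'' with $\epsilon_L(c(x,y))=c(\tilde\epsilon_L(x),\tilde\epsilon_L(y))$, hence $\Phi(x+iy)=\tilde\epsilon_L(x)+i\tilde\epsilon_L(y)$. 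This does not follow from Section \ref{AffAreOpSys} or from the categorical duality: the duality produces a linear (adjoint) map between the \emph{duals of the affine function systems}, not between the originally given ambient spaces $E$ and $F$; nc affine maps may involve translations and need not extend linearly to $E$; and condition (1) is a statement about the given ambient $F$ and its real subspace $Y$, so replacing $F$ by $A_{\bR}(L)^*$ (or similar) through a nc affine homeomorphism is not an innocuous move when proving $(1)\Rightarrow(2)$. So as written, the implications $(1)\Rightarrow(2)\Rightarrow(3)$ and the derivation of injectivity of $\Phi$ all hang on an unestablished identity. The route you dismiss as ``feasible but substantially less transparent'' is in fact the paper's short fix and needs no linearization at all: since $W^\tran c(x,y)W=c(x,y)$ for the orthogonal matrix $W$ with block rows $[0,-I]$ and $[I,0]$, nc affineness of $\epsilon_L$ gives $\epsilon_L(c(x,y))=W^\tran\epsilon_L(c(x,y))W$, which forces the block form $\epsilon_L(c(x,y))=c(a,b)$; compressing by the isometry $e_1=[I\;0]^\tran$ and using equivariance gives $a=\epsilon_L(x)$, whence $u^*\epsilon_L(c(x,y))u=\epsilon_L(x)+ib$. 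With this intrinsic decomposition in hand, your arguments for $(1)\Rightarrow(2)$, $(2)\Rightarrow$ injectivity, and $(2)\Rightarrow(3)$ go through essentially verbatim (for (1) one notes $\epsilon_L(c(x,y))\in M_{2n}(Y)$ so $\epsilon_L(x),b\in M_n(Y)$ and uses $Y\cap iY=0$).

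A secondary point: in your cyclic scheme $(1)\Rightarrow(2)\Rightarrow(3)\Rightarrow(4)\Rightarrow(1)$, the step $(3)\Rightarrow(4)$ invokes ``injectivity of $\Phi$'' but no argument is given that (3) alone yields it; asserting that (3) ``translates into injectivity'' is not a proof. The paper gets this by applying $\theta_L$ to an equality $u^*\epsilon_L(c(x,y))u=u^*\epsilon_L(c(x',y'))u$ and averaging to conclude $\epsilon_L(x)=\epsilon_L(x')$ (equivalently, using $c(x,-y)=wc(x,y)w$ with $w=I\oplus(-I)$ to see the image of $c(x,-y)$ is $c(\epsilon_L(x),-b)$), which reduces (3) to (2) and then to injectivity; you should supply this. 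The remaining parts of your proposal --- transporting $\theta_{K_c}$ to define $\theta_L=\Phi\circ\theta_{K_c}\circ\Phi^{-1}$ with fixed point set $\epsilon_L(K)$, recovering (1) from $L\cong K_c$ with $F=E_c$, $Y=E$, and uniqueness via $\Phi'\circ\Phi^{-1}$ --- match the paper's proof and are fine once injectivity is properly established.
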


            \begin{proof}  For  $\epsilon_L : K \to L$ as in the statement, let $\widetilde{\epsilon_L} : K_c \to L$ be 
             the continuous  nc affine extension from Lemma \ref{Affineext}, which we also write 
                 as $\tilde{\epsilon}$.  It satisfies  $$\tilde{\epsilon}(x + iy) = u^* \epsilon_L(c(x,y)) u , \qquad c(x,y) \in K_{2n} ,$$
             and in particular $\widetilde{\epsilon_L} \circ \iota =  \epsilon_L$. 
             Also   $\tilde{\epsilon}$ is surjective, since 
             $$u^* \epsilon_{L}(c(x,y)) u =  \tilde{\epsilon}(u^* i_{2n}(c(x,y)) u).$$
             We will show that $\widetilde{\epsilon_L}$ is one-to-one if and only if any one of conditions (1)--(4) hold.
             Indeed if $\widetilde{\epsilon_L}$ is one-to-one then it
             is a   nc affine homeomorphism by Lemma \ref{Affineemb}.   Thus $L \cong K_c$, via a map taking $\epsilon_L$ to the canonical embedding $K \to K_c$.  Hence (1)--(4) all hold since they hold for $K_c$ (e.g.\ in (1) one may take $Y = E$ and $F = E_c$, using notation from the definition of $K_c$ above). 
             
             Clearly (4) implies (3).    If (3) holds and $u^* \epsilon_L(c(x,y)) u = u^* \epsilon_L(c(x',y')) u$
             then applying $\theta_L$ to this condition gives  $u^* \epsilon_L(c(x,-y)) u = u^* \epsilon_L(c(x',-y')) u$.  Averaging these we obtain $$\epsilon_L(x) = u^* \epsilon_L(c(x,0)) u = u^* \epsilon_L(c(x',0)) u = \epsilon_L(x').$$ So $x = x'$.   Thus (2) holds.
              
                Since $\epsilon_L$ is nc affine we have $\epsilon_L(c(x,y)) = W^*  \epsilon_L(c(x,y)) W$ 
     where $W$ is the matrix with rows $[0,-I]$ and $[I , 0]$.   It follows that 
     $\epsilon_L(c(x,y)) =  c(a,b)$ for some $a,b$.  We have 
           $$a = \vec e_1^\tran \epsilon_L(c(x,y)) \vec e_1= \epsilon_L(\vec e_1^\tran c(x,y) \vec e_1) = \epsilon_L(x).$$
                Thus  $u^* \epsilon_L(c(x,y)) u = \epsilon_L(x) + i \, b$.  Supposing $u^* \epsilon_L(c(x,y)) u = u^* \epsilon_L(c(x',y')) u$,
                     $$u^* \epsilon_L(c(x', y')) u = \epsilon_L(x') + i \, z, \qquad {\rm if} \; \epsilon_L(c(x',y')) = c(\epsilon_L(x'),z).$$  If (2) holds then $x = x'$ and so $b = z$, so that $\epsilon_L(c(x,y)) = c(a,b) = \epsilon_L(c(x,y')),$ hence  $y = y'$.
     Thus $\widetilde{\epsilon_L}$ is one-to-one. 
Similarly, assuming (1) note that $\epsilon_L(c(x,y)) \in M_{2n}(Y)$, so that $b$ in the last lines is in $M_n(Y),$ as is $\epsilon(x)$.    Thus $\epsilon(x) + i b = \epsilon(x') + i z$ implies $b = z$ and $x = x'$, and $y = y'$.  
      \end{proof}
               
            Example    \ref{Quaternions} shows that complexification can be complicated, and for instance can change the first level of a nc convex set quite a bit. For now we give a simpler example.

            \begin{example}
            \label{ConvexInterval2}

            Consider the real compact operator interval from Example \ref{ConvexInterval1}. The complexification of the real operator interval will be the complex compact operator interval. To see this, first let $a,b$ be real numbers. Let $\bigsqcup [a1_n, b1_n]_{\bR}$ be the real operator interval and $\bigsqcup [a1_n,b1_n]_{\bC}$ be the complex operator interval. Take $x+iy \in [a1_n,b1_n]_c$ and we want to show it is in $[a 1_n, b 1_n]_{\bC}$. By the definition of the complexification, $c(x+iy) \in [a 1_{2n}, b1 _{2n}]$ and so we have
            \[a1_{2n} \leq \begin{bmatrix}
                x & -y \\ y & x
            \end{bmatrix} \leq b 1_{2n}\]
            However, the above will hold if and only if
            \[a 1_{n} \leq x+iy \leq b 1_{n}\]
            and therefore $x+iy \in [a 1_{2n}, b 1_{2n}]_\bC$. Conversely, if we take $z \in [a1_{n}, b 1_{n}]_\bC$ then $z$ can be written as the sum of a real and imaginary part, say $z=x+iy$, satisfying the centered equation above and therefore $z \in [a 1_{n}, b 1_n]_c$.
                          \end{example}

            As in Theorem 2.4.1 of \cite{DK} (Corollary 16.5.3 of \cite{Dav}) we have a real noncommutative separation theorem.

            \begin{theorem}\label{separationthrm}
                Let $K$ be a real closed nc convex set over a real dual operator space $E$. Suppose there is an $n$ and $y \in M_n(E)$ such that $y \not \in K_n$. Then there exists $\gamma \in M_n(\bR)_{\rm sa}$ and a normal completely bounded map $\varphi: E \to M_n(\bR)$ such that $\Rea \, \varphi_n(y) \not \leq  1_n \otimes \gamma$ but for all $p$ and $x \in K_p$ we have $\Rea \, \varphi_p(x) \leq 1_p \otimes \gamma$.  
                If $0_E \in K$ we can take $\gamma = 1_n$.
                If $E$ is a real operator system and $K \bigcup \{y\}$ consists of selfadjoint elements, then $\varphi$ can be chosen to be selfadjoint.
            \end{theorem}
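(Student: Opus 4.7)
The plan is to avoid redoing the real Hahn--Banach argument from scratch and instead bootstrap from the complex version, Theorem 2.4.1 of \cite{DK}, using the complexification machinery of Section \ref{SecCompl}. First I form $K_c \subseteq \cM(E_c)$, which is a complex closed nc convex set by Theorem \ref{ncrealaffinecoml}, and verify that $y \notin (K_c)_n$: if it were, then $c(y,0) = y \oplus y \in K_{2n}$, and compressing by the isometry $[I_n \; 0]^\tran$ would give $y \in K_n$, contradicting the hypothesis. Applying the complex separation theorem to the pair $(y, K_c)$ then yields $\tilde\gamma \in M_n(\bC)_{\rm sa}$ and a normal completely bounded complex-linear $\psi \colon E_c \to M_n(\bC)$ with $\Rea \psi_n(y) \not\leq 1_n \otimes \tilde\gamma$ and $\Rea \psi_p(z) \leq 1_p \otimes \tilde\gamma$ for every $z \in (K_c)_p$ and every $p$.

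Next I realify via the canonical embedding $c \colon M_n(\bC) \to M_{2n}(\bR)$. Set $\hat\varphi = c \circ \psi|_E \colon E \to M_{2n}(\bR)$, which is real-linear, normal, and completely bounded, and set $\hat\gamma = c(\tilde\gamma) \in M_{2n}(\bR)_{\rm sa}$. The identity $c(A^*) = c(A)^\tran$ gives $c(\Rea A) = \Rea c(A)$, and the defining property $c(M) \geq 0 \iff M \geq 0$ of complex positivity (for Hermitian $M$), combined with a block-permutation relating $c(\psi_p(x))$ with the amplification $\hat\varphi_p(x)$ and $c(1_p \otimes \tilde\gamma)$ with $1_p \otimes \hat\gamma$, transfer the complex separation to $\Rea \hat\varphi_p(x) \leq 1_p \otimes \hat\gamma$ for $x \in K_p$ (viewed in $K_c$ via $\iota$) and $\Rea \hat\varphi_n(y) \not\leq 1_n \otimes \hat\gamma$.

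Since $\hat\varphi$ takes values in $M_{2n}(\bR)$ while the theorem demands $M_n(\bR)$, the last step compresses the codomain down. The failure at $y$ is witnessed by some unit vector $\xi \in \bR^n \otimes \bR^{2n}$; writing $\xi = \sum_{k=1}^n e_k \otimes w_k$, the subspace $W = \mathrm{span}\{w_1, \ldots, w_n\}$ has dimension at most $n$, so I may choose an isometry $V \in M_{2n,n}(\bR)$ whose range contains $W$, giving $\xi = (1_n \otimes V)\eta$ for some $\eta \in \bR^{n^2}$. Setting $\varphi(e) = V^\tran \hat\varphi(e) V \colon E \to M_n(\bR)$ and $\gamma = V^\tran \hat\gamma V \in M_n(\bR)_{\rm sa}$, the bound $\Rea \varphi_p(x) \leq 1_p \otimes \gamma$ for $x \in K_p$ follows because compression by $1_p \otimes V$ preserves $\leq$, and $\Rea \varphi_n(y) \not\leq 1_n \otimes \gamma$ because $\eta$ witnesses the failure (using $V^\tran \Rea(\cdot) V = \Rea(V^\tran (\cdot) V)$). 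For the $0 \in K$ case, complex DK gives $\tilde\gamma = I_n$, hence $\hat\gamma = I_{2n}$ and $\gamma = V^\tran V = I_n$; for the selfadjoint case, the selfadjoint version of complex DK combined with $c(\psi(z^*)) = c(\psi(z))^\tran$ and preservation of symmetry under compression by $V$ keeps $\varphi$ selfadjoint.

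The main technical obstacle is the dimension-reduction step: an arbitrary isometry $V$ would likely destroy the strict failure at $y$, so one must engineer $V$ around the witnessing vector. The construction succeeds precisely because $\xi$ has only $n$ tensor components $w_k$, forcing $\dim W \leq n$; this is the critical matching of dimensions that makes the realification land in $M_n(\bR)$ as required.
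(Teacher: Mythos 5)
Your proof is correct, but it takes a genuinely different route from the paper. The paper simply proves the theorem the same way Davidson--Kennedy do, i.e.\ it re-runs the Effros--Winkler separation argument over $\bR$ (the real version of \cite[Theorem 5.4]{EW}, using the real GNS theorem applied to a faithful real state of $M_n$), so the real statement is obtained directly rather than deduced from the complex one. You instead bootstrap from the complex Theorem 2.4.1 of \cite{DK}: pass to $K_c$ (closed by Theorem \ref{ncrealaffinecoml}), check $y\notin (K_c)_n$ via $c(y,0)=y\oplus y$ and a compression, realify the resulting $\psi\colon E_c\to M_n(\bC)$ through $c\colon M_n(\bC)\to M_{2n}(\bR)$ (the canonical-shuffle bookkeeping and the facts $c(A^*)=c(A)^\tran$, $M\ge 0\iff c(M)\ge 0$ are exactly right), and then cut the codomain back down from $M_{2n}(\bR)$ to $M_n(\bR)$. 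The dimension-reduction step is the genuinely new ingredient, and it is handled correctly: a naive compression could destroy the strict failure at $y$, but since the witnessing vector lies in $\bR^n\otimes\bR^{2n}$ its second legs span a subspace of dimension at most $n$, so an isometry $V\in M_{2n,n}(\bR)$ containing that span in its range preserves the negativity, while positivity of $1_p\otimes\hat\gamma-\Rea\hat\varphi_p(x)$ is preserved under any compression; the $0\in K$ case ($\gamma=V^\tran V=I_n$) and the selfadjoint case (selfadjointness of $\psi$ survives $c$ and compression by a real $V$, and $K_c\cup\{y\}$ is selfadjoint when $K\cup\{y\}$ is) also go through. What each approach buys: the paper's route is self-contained on the real side and yields the real Effros--Winkler theorem itself as a reusable tool, while yours shows the real separation theorem is a formal consequence of the complex one via the complexification functor of Section \ref{SecCompl}, at the modest cost of the shuffle identities, the (standard, implicit in the paper) weak*-continuity of $E\hookrightarrow E_c$ needed for normality of $c\circ\psi|_E$, and the rank trick.
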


            \begin{proof}   This follows  as in the complex case in \cite{DK} from the Effros-Winkler 
              separation theorem in \cite[Theorem 5.4]{EW} (see \cite[Theorem 16.5.2]{Dav}) for a different account of the latter).
              The real version of the latter is proved exactly as in the complex case (needing the real version of the GNS representation theorem \cite[Theorem 3.3.4]{Li} 
              applied to a faithful real state of $M_n$). 
            \end{proof}

            Let $V$ be a real operator system and consider $\ncS(V)_c$. This will be a complex nc convex set, as is $\ncS(V_c)$. There is a canonical map
            \[\psi: \ncS(V)_c \to \ncS(V_c)\]
            induced by the canonical isomorphism 
            $CB(V,W)_c \cong CB(V_c,W_c)$ (see for example \cite[Theorem 2.3]{BCK}).  Indeed for $x,y \in V$ and $f,g \in M_{n}(\mathrm{CB}(V,\bC))$ we have  \[\psi(f  + i g )(x+iy) = f (x) - g (y) + i f(y) + i g(x).\]
            The inverse map takes $u \in CB(V_c,W_c)$ to 
            ${\rm Re} \, u_{|V} + i {\rm Im} \, u_{|V}$, 
            where ${\rm Re}, {\rm Im}$ here denote the two canonical projections $W_c \to W$. This map will be called $\gamma$.

            \begin{lemma}\label{ncSCommC} The map 
                $\psi: \ncS(V)_c \to \ncS(V_c)$ is a bijective continuous complex affine nc function with continuous inverse $\gamma$.  
            \end{lemma}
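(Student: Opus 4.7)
The plan is to reduce the four assertions (well-definedness of $\psi$ and $\gamma$, bijectivity, the complex nc affine property, and continuity) to the universal property of the complexification of real operator systems from \cite{BR}. For well-definedness of $\psi$, given $f + ig \in (\ncS(V)_c)_n$, Theorem~\ref{ncrealaffinecoml} says this means exactly that $c(f,g) : V \to M_{2n}(\bR)$ is real ucp. I would then observe that $c(f,g) = c \circ (f + ig)$, where on the right $c : M_n(\bC) \to M_{2n}(\bR)$ is the canonical real unital complete order embedding arising from $M_n(\bC) = (M_n(\bR))_c$. Since $c$ is a real ucoe, composing with $c$ reflects real complete positivity, so $f + ig : V \to M_n(\bC)$ is real ucp as a map into the complex operator system $M_n(\bC)$. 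Its unique complex linear extension to $V_c$ is then complex ucp by the universal property of $V_c$, and this extension is precisely $\psi(f+ig)$, so $\psi(f+ig) \in \ncS(V_c)_n$. Well-definedness of $\gamma$ is symmetric: for $u \in \ncS(V_c)_n$, the restriction $u|_V = u \circ j$ is real ucp into $M_n(\bC)$, so $c \circ u|_V = c(\gamma(u))$ is real ucp, giving $\gamma(u) \in (\ncS(V)_c)_n$.

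Next I would verify that $\psi$ and $\gamma$ are mutual inverses. By uniqueness of complex linear extension, $\psi(\gamma(u))$ is the complex linear extension of $u|_V$, which is $u$; and restricting $\psi(f + ig)$ to $V$ returns $f + ig : V \to M_n(\bC)$, whose real and imaginary parts recover $f$ and $g$, giving $\gamma(\psi(f + ig)) = f + ig$. For the complex nc affine property, $\psi$ is graded by construction (each $\psi_n$ lands in $\ncS(V_c)_n$), and the formula $\psi(f + ig)(x + iy) = f(x) - g(y) + i(f(y) + g(x))$ is complex linear in $(f + ig)$ for fixed $(x + iy)$. A direct matrix computation using this formula shows that $\psi$ commutes with complex nc convex combinations and compressions by complex isometries, so $\psi$ is complex nc affine; the analogous check for $\gamma$ follows the same pattern.

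Finally, for continuity at each level $n$: the weak* topology on $(\ncS(V)_c)_n$ corresponds, under the bicontinuous embedding $c$ from Theorem~\ref{ncrealaffinecoml} and the remarks following it, to the point-weak* topology on $\ncS(V)_{2n}$, which in turn is equivalent to simultaneous point-weak* convergence $f_\alpha \to f$ and $g_\alpha \to g$ on $V$. Substituting into the formula for $\psi(f + ig)(x + iy)$ immediately gives point-weak* convergence of $\psi(f_\alpha + i g_\alpha)$ on all of $V_c$, hence continuity of $\psi$; continuity of $\gamma$ follows from continuity of restriction to $V$ followed by extraction of real and imaginary parts. The main obstacle is the well-definedness step, which genuinely needs the universal property that real ucp maps $V \to W$ (for $W$ a complex operator system) extend uniquely to complex ucp maps $V_c \to W$, as developed in \cite{BR} and closely related to the identification $CB(V,W)_c \cong CB(V_c, W_c)$ from \cite{BCK}. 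Once this is in hand, the remaining verifications are essentially formal.
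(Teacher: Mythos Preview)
Your proposal is correct and follows essentially the same approach as the paper's proof. The paper likewise reduces well-definedness to the fact that $f+ig \in (\ncS(V)_c)_n$ iff $c(f,g)$ is real ucp iff $f+ig:V\to M_n(\bC)$ is real ucp, then invokes the universal property (via \cite[Lemma 3.1]{BReal}) to identify such maps with complex matrix states on $V_c$; continuity and the nc affine property are obtained from $\psi$ being a restriction of the canonical isomorphism $CB(V,\bR)_c \cong CB(V_c,\bC)$, and the paper also supplies a longer direct computational verification paralleling your outline.
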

            
             \begin{proof} For the reader's convenience and because we will need some of the details later, such as certain specific maps, we give two proofs, and mention a third.   Since these are closed nc convex sets in a dual operator space we may use 
                         the idea in \cite[Proposition 2.2.10]{DK} (or  \cite[Proposition 16.4.5]{Dav})
            to see that it suffices to check this at the $n$th level, for all $n \in \bN$. Since $\psi$ is a restriction of 
               the canonical isomorphism 
            $CB(V,\bR)_c \cong CB(V_c,\bC)$, 
            it is a continuous complex nc affine  function with continuous inverse.   To see that this takes
            $\ncS(V)_c$ onto $\ncS(V_c)$, note that by \cite[Lemma 3.1]{BR}  a map 
            $u : V_c \to M_n(\bC)$ is a complex matrix state
            if and only if its restriction to $V$ is  
            real ucp.  However the real ucp maps 
            $h: V \to M_n(\bC)$ are identifiable with the elements $f + ig \in \ncS(V)_c$.  
            To see this, notice that 
 the latter are precisely the  $f + ig$ such that  $c(f,g)(x) = c(f(x),g(x))$
            defines a real matrix state on $V$.
            Indeed these matrix states are precisely the ones which can be identified (via composition with the canonical identification  $c_n : M_n(\bC) \to M_{2n}(\bR)$)  with a real ucp  map $h : V \to M_n(\bC)$ with $h(x) = f(x) + i g(x)$ for $x \in V$.
            These identifications are another way of describing the map $\psi$ above, and its inverse. 
            
            More detailed proof: That $\psi$ is well defined and continuous is as above.  Similarly,   $\psi$ is graded because it sends certain elements of $M_{n}(CB(V, \bR)_c)$ to elements of $M_n(CB(V_c, \bC))$, and  is complex
nc affine being the restriction of a $\bC$-linear map.
 Let $f+ig \in (\ncS(V)_c)_N$ so that $c(f+ig)$  will be a ucp map from $V$ to $M_{2N}(\bR)$. The complexification of  $c(f+ig)$ is a ucp map from $V_c$ to $M_{2N}(\bC)$ given by
            \[\big(c(f+ig)_c\big)(x+iy) = \begin{bmatrix}
                f (x) & -g (x)\\  g (x) & f (x)    \end{bmatrix} + i \begin{bmatrix}  f (y) & -g (y)\\   g (y) & f (y)   \end{bmatrix}\]
            Taking the $r$th amplification shows that for $0 \leq [x_{nm} + i y_{nm}] \in M_r(V_c)$ we have 
            \begin{align*}            0 &\leq                 \big(c(f+ig)_c \big)^{(r)}            ([x_{nm} + i y_{nm}])            \\&=  
 \begin{bmatrix}  \begin{bmatrix}   f (x_{nm}) & -g (x_{nm})\\   g (x_{nm}) & f (x_{nm}) \end{bmatrix} + i \begin{bmatrix}
               f (y_{nm}) & -g (y_{nm})\\                g (y_{nm}) & f (y_{nm})            \end{bmatrix}            \end{bmatrix} .
 \end{align*} Compressing the last matrix by $\frac{1}{\sqrt{2}}\begin{bmatrix} 1_n \\ -i \cdot 1_n \end{bmatrix}$ 
gives that   \begin{align*}                0 &\leq \frac{1}{\sqrt{2}}\begin{bmatrix}                    1_n & i \cdot 1_n
\end{bmatrix}\Big(\begin{bmatrix}   f(x_{nm}) & -g(x_{nm}) \\ g(x_{nm}) & f(x_{nm})   \end{bmatrix} 
 + i  \begin{bmatrix}    f(y_{nm}) & -g(y_{nm}) \\ g(x_{nm}) & f(y_{nm})   \end{bmatrix} \Big) \frac{1}{\sqrt{2}}
\begin{bmatrix}   1_n \\ -i \cdot 1_n   \end{bmatrix} \\&= [f(x_{nm}) - g(y_{nm}) + i f(y_{nm}) + i g(x_{nm})]
            \\&= \psi(f+ig)^{(M)}([x_{nm} + i y_{nm}])            \end{align*}
            or $\psi(f+ig)$ is completely positive. It is unital because $c(f+ig)$ is unital and therefore $\psi$ sends elements of $\ncS(V)_c$ to matrix states.
                     
         The inverse of $\psi$ is $\gamma$ since for $\omega \in \ncS(K_c)$ and $x,y \in V$ we have
            \begin{align*}
                \psi(\re \omega+i \im \omega)(x+iy) 
                &= \re \omega(x)  - \im\omega(y) + i \re\omega(y) + i \im\omega(x)
                \\&= (\re\omega + i \im\omega)(x+iy)
                \\&= \omega(x+iy)
            \end{align*}
            and conversely for $f+ig \in \ncS(V)_c$ we get
            \begin{align*}
                \gamma(\psi(f+ig))
                &= \re \psi(f+ig) + i \im \psi(f+ig)
                \\&= f+ig .
            \end{align*}
            The inverse is a well defined map because $\re \omega + i \im \omega$ is in $\ncS(V)_c$. Indeed $c(\re \omega + i \im \omega)$ is ucp because for $N \in \bN$ and $0 \leq [x_{nm}] \in M_N(V)$ we have
            \begin{align*}
                c(\re \omega+i\im \omega)^{(N)}([x_{nm}])
                &= \begin{bmatrix}
                    \re(\omega(x_{nm})) & -\im(\omega(x_{nm})) \\ \im(\omega(x_{nm})) & \re(\omega(x_{nm}))
                \end{bmatrix} \in M_{2N}(\bR)
            \end{align*}
            and the latter is positive if and only if $[\re(\omega(x_{nm})) + i \im (\omega(x_{nm}))] = [\omega(x_{nm})]$ is positive. However, this is the $K$th amplification applied to $[x_{nm}]$ which is positive. Finally, the inverse is continuous because $\psi$ restricted to any $(\ncS(V)_c)_N$ is a bijective continuous map with compact domain.
             \end{proof}
             
          {\bf Remarks.} 1)\        The last result also follows from Theorem   \ref{unco}, and checking that $\ncS(V_c)$ is a reasonable complexification of  $\ncS(V).$  Indeed take 
          $Y = V^*$ and $F = (V_c)^*$ there, with $Y \subset F$ via $\psi \mapsto \psi_c$.
          
          \smallskip

          2)\ A similar proof shows that for a real operator system $V$, and cardinal $n$, on $M_n((V_c)^*) = CB(V_c , M_n(\bC))$  the canonical cone $({\mathfrak C}_c)_n$  coincides with 
the expected cone CP$(V_c , M_n(\bC))$.   Here $({\mathfrak C}_n)$ are the usual cones $({\rm CP}(V,M_{n}(\bR)))$. 
 Thus if $\varphi \in M_n((V_c)^*) \cong CB(V_c, M_n(\bC)) = CB(V,M_n(\bR))_c$, so that (uniquely) 
$\varphi = \psi_c + i \rho_c$ for $\psi, \rho \in CB(V,M_n(\bR))$, then 
$\varphi \in M_n((V_c)^*)^+ = {\rm CP}(V_c,M_n(\bC))$ if and only if $c(\varphi)  = c(\psi, \rho) \in 
M_{2n}(V^*)^+ = {\rm CP}(V,M_{2n}(\bR))$.   This also may be deduced from the statement of  Lemma \ref{ncSCommC}. 

            \begin{example}\label{Quaternions}
                Consider the quaternions $\bH$ as a real $C^*$-algebra. This example will recur 
                                repeatedly through the series of papers, so we will be brief here.
                The state space of $\bH$ is trivial, a singleton containing only the map $(a+ib+jc+kd \mapsto a)$.  However the non-commutative state space at the higher levels make up for this deficit, and is much more interesting. The complexification of $\bH$ is $M_2(\bC)$. So the first level of $\ncS(\bH_c) \cong \ncS(M_2(\bC))$ is all states on $M_2(\bC)$, which correspond to positive $2 \times 2$ trace one matrices -- i.e.\ the first level is affine isomorphic to the Bloch sphere. By Lemma \ref{ncSCommC} we have $\ncS(\bH_c) \cong \ncS(\bH)_c$ and so through complexification the first level of $\ncS(\bH)$ went from having a single element to containing a three dimensional ball's worth of elements.
            \end{example}

            Next, for $K$ real compact nc convex we consider $A(K)_c$ as a complexification of an operator system versus $A(K_c)$ as a complex operator system. Let $f,g: K \to \cM(\bR)$ be real affine maps and $x+iy \in M_N(K_c)$. Define the map $\Psi: A(K)_c \to A(K_c)$ by 
            \[\Psi(f+ig)(x+iy) = u_n^*
                \big(f(c(x+iy)) + i g(c(x+iy))\big)
                u_n . \]
            We will see this has inverse
            $\Gamma: A(K_c) \to A(K)_c$ taking $\omega \in A(K_c)$ to $\re \omega_{|K} + i \im \omega_{|K}$.

           For a real compact nc convex set $K$ there is a canonical map $\epsilon : A(K) \to A_{\bC}(K_c)$
           defined by $\epsilon(f) = f_c$, where $f_c$ is as above.            

            \begin{thm}\label{AffineCommC}
                The map $\Psi: A(K)_c \to A(K_c)$ is a ucoi with inverse $\Gamma$.
                Indeed $A(K)$ may be identified with the fixed points of the period 2 conjugate linear complete order automorphism $a_\theta$ of 
                $A(K_c)$ defined by $a_\theta(f) = \theta_{\bC} \circ f \circ \theta_K$, where $\theta_K$ is as defined after Theorem  {\rm \ref{ncrealaffinecoml}}.
            \end{thm}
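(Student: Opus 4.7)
The plan is to show that $\Psi$ and $\Gamma$ are mutually inverse unital complex linear complete order isomorphisms, and then derive the $a_\theta$ statement by transporting the canonical involution on $A(K)_c$ along $\Psi$. First I observe that for $f, g \in A(K)$ the map $\Psi(f+ig)$ coincides with $f_c + i g_c$, where $f_c, g_c$ are the unique continuous complex nc affine extensions to $K_c$ provided by Lemma \ref{Affineext}, so $\Psi(f+ig) \in A(K_c)$; complex linearity and unitality are immediate. For $\Gamma$, Lemma \ref{reisAffine} together with the real nc affine inclusion $\iota : K \to K_c$ ensures that $\re \omega \circ \iota$ and $\im \omega \circ \iota$ lie in $A(K)$, so $\Gamma$ lands in $A(K)_c$. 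To see these are inverse, for $k \in K$ the identity $u^*(A \oplus A) u = A$ (used in Lemma \ref{ccl}) gives
\[\Psi(f+ig)(\iota(k)) = u^*\bigl(f(k \oplus k) + i g(k \oplus k)\bigr) u = f(k) + i g(k),\]
so $\Gamma \circ \Psi = \mathrm{id}$; conversely, $x + iy = u^* \iota(c(x,y)) u$ combined with complex nc affinity of $\omega \in A(K_c)$ gives $\omega(x+iy) = u^* \omega(\iota(c(x,y))) u = \Psi(\Gamma(\omega))(x+iy)$.

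The complete order isomorphism splits into two directions. If $[f_{ij} + i g_{ij}] \in M_n(A(K)_c)^+$ then by definition of the complexification, $[f_{ij}(k) + i g_{ij}(k)] \geq 0$ for every $k \in K$. Evaluating at $k = c(x,y) \in K_{2m}$ for $x+iy \in (K_c)_m$ and compressing by $U = I_n \otimes u_m$ gives
\[[\Psi(f_{ij} + ig_{ij})(x+iy)] = U^*\, [f_{ij}(c(x,y)) + ig_{ij}(c(x,y))]\, U \geq 0,\]
so $\Psi$ is completely positive. Conversely, if $[\Psi(f_{ij} + i g_{ij})] \in M_n(A(K_c))^+$, evaluating at $\iota(k) \in K_c$ and using the displayed formula from the previous paragraph yields $[f_{ij}(k) + i g_{ij}(k)] \geq 0$ for all $k \in K$, i.e.\ positivity in $M_n(A(K)_c)$. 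Hence $\Gamma = \Psi^{-1}$ is also completely positive and $\Psi$ is a ucoi.

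For the final assertion, the canonical period 2 conjugate linear complete order automorphism of the operator system complexification $A(K)_c$ is $\theta_{A(K)}(f+ig) = f - ig$, with fixed point set $A(K)$. Its transport $\Psi \circ \theta_{A(K)} \circ \Psi^{-1}$ along the ucoi $\Psi$ is automatically a period 2 conjugate linear complete order automorphism of $A(K_c)$ whose fixed points are $\Psi(A(K)) = \{f_c : f \in A(K)\}$, providing the required identification. To match this transport with $a_\theta$, I would use the identity $c(x,-y) = J c(x,y) J$ for the real selfadjoint unitary $J = I \oplus (-I)$, together with $u^* J = \bar u^*$ and $J u = \bar u$, and the fact that nc affine functions commute with conjugation by unitaries. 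A short calculation then gives
\[\Psi \circ \theta_{A(K)} \circ \Psi^{-1}(\omega)(x+iy) = u^* \overline{\omega(\iota(c(x,y)))}\, u = \overline{\omega(x-iy)} = (\theta_\bC \circ \omega \circ \theta_K)(x+iy).\]
I expect the main technical friction to lie in this last calculation, carefully tracking how the conjugate-linear flip $\theta_\bC$ on values interacts with the coordinate flip $\theta_K$ on $K_c$; everything else reduces to applying the formula $\Psi(f+ig) = f_c + i g_c$ and unwinding the definition of the complexification of operator systems.
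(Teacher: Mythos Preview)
Your proof is correct. The ucoi part is essentially the paper's ``more detailed proof'' compressed by the observation $\Psi(f+ig)=f_c+ig_c$, which lets you import well-definedness and affinity directly from Lemma~\ref{Affineext} rather than reverifying them; the positivity arguments are then the same compression-by-$U=I_n\otimes u_m$ computations. Where you genuinely diverge is the $a_\theta$ statement: the paper proves it \emph{directly}, checking by hand that $a_\theta$ is conjugate linear, selfadjoint, completely positive, period~2, and that its fixed points are exactly $\epsilon(A(K))$ (using the uniqueness clause of Lemma~\ref{Affineext}). You instead transport the canonical involution $\theta_{A(K)}$ on $A(K)_c$ through the ucoi $\Psi$ and then identify the transported map with $a_\theta$ via the $J$-calculation. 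Your route is shorter and more conceptual (all the structural properties come for free from transport along a ucoi), at the cost of the somewhat delicate bookkeeping with $u$, $\bar u$, $u^T$, and $J$ that you flag at the end; the paper's route avoids that bookkeeping but has to verify each property of $a_\theta$ from scratch.
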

            \begin{proof} For the same reason as before, and also to exhibit a complementary result, we give two proofs. Since $\theta_K$ is affine and continuous it follows that $a_\theta : A(K_c) \to A(K_c)$.  Since $\theta_K$ is  period 2, so clearly is
$a_\theta$ too.  Clearly $a_\theta$ is unital, and it is not hard to see that it is conjugate linear since $\theta_{\bC}$ is 
conjugate linear: for example if $f \in A(K_c)$ then $$a_\theta(if)(k_1 + i k_2) = \theta_{\bC} ( (if) (k_1 - i k_2))
= i a_\theta(f)(k_1 + i k_2).$$  

For $x, y \in M_n(\bR)$ we have $$\theta_{\bC}((x+iy)^*) = 
\theta_{\bC}(x^* - i y^*) = x^* + i y^*,$$ which equals $(\theta_{\bC}(x+iy))^* = (x-iy)^*$.
Thus $a_\theta(f^*) = a_\theta(f)^*$, since for example $$a_\theta(f^*)(k_1 + i k_2) = \theta_{\bC}(f^* ((k_1 - i k_2))= \theta_{\bC}(f(k_1 - i k_2)^*).$$ 
If $f \in M_n(A(K))^+ = A(K,M_n)^+$ then 
$$[a_\theta(f_{ij})(k)] = [\theta_{\bC} (f_{ij} (\theta_{K}(k)))], \qquad k \in K_c .$$  Since $\theta_{\bC}$ and its amplifications are completely positive, and $f(K) \geq 0$, we have 
$$[a_\theta(f_{ij})(k)] \geq 0,$$ so that $a_\theta$  is completely positive.

The fixed points of $a_\theta$ clearly include 
$\epsilon(A(K))$.  Indeed $a_\theta(f_c) = \theta_{\bC} \circ f_c 
\circ \theta_{K}$ is an affine extension of $f$ and so $a_\theta(f_c) = f_c$ by the uniqueness in Lemma \ref{Affineext}.
Conversely, suppose that $a_\theta(g) = g$ for $g \in A(K_c)$.
Then $\theta_{\bC}(g(\iota(k))) = g(\iota(k))$ for $k \in K$.
Thus $g(\iota(k)) \in \iota(K)$.  Let $f = Re \, g_{|K}$, a real nc affine map on $K$.  Then 
$g = f_c = \epsilon(f)$ by the uniqueness in Lemma \ref{Affineext}, since these are both nc affine extensions of $f$. 

    More detailed proof: 
                The function $\Psi$ is complex linear. We show that this map is well defined. First, $\Psi(f+ig)$ will be continuous because $f,g$ are continuous, and $\Psi(f+ig)$ is graded because $f,g$ are graded. Now, notice that for matrices $a,b \in M_N(\bR)$ we have  $$u_n^* c(a+ib)^\tran = (a+ib)^*u_n^*.$$ Similarly $c(a+ib)u_n =u_n(a+ib)$. Let $x+iy \in M_N(K_c)$ and $a,b \in M_{N,k}(\bR)$ such that $a+ib$ is an isometry, then using facts about the function $c$ in Theorem \ref{ncrealaffinecoml} and that $f,g$ are real affine gives
                \begin{align*}
                    \Psi(f+ig)&((a+ib)^*(x+iy)(a+ib))
                    \\&=u_n^*(f(c(a+ib)^\tran c(x+iy)c(a+ib)) \\&\quad+ i g(c(a+ib)^\tran c(x+iy)c(a+ib)))
                 u_n
                \\&= (a+ib)^* \big(\Psi(f+ig)(x+iy)\big)(a+ib)
               . \end{align*}
                Therefore $\Psi(f+ig)$ preserves compressions. A similar proof shows that it preserves direct sums and therefore $\Psi(f+ig)$ is affine. If $f+ig$ is positive in $A(K)_c$ then $c(f+ig)$ is positive in $M_2(A(K))$, or for any $k \in K$ we have $c(f+ig)(k) \geq 0$. Compressing this matrix by $u_n$ gives that $f(k)+ig(k) \geq 0$ for all $k \in K$. From this we see that for any $x+iy \in K_c$ we have $\Psi(f+ig)(x+iy) \geq 0$ as we are just taking the adjoint of a positive matrix. Therefore, $\Psi$ is positive and a similar proof shows that our map is completely positive. The unit of $A(K)_c$ is $1+i0$ where $1$ is the constant function on $K$. Note $\Psi(1+i0)(x+iy) = 1_N$ and so $\Psi$ is unital.

                To show that $\Psi$ has inverse $\Gamma$ we see that for bounded $\omega: K_c \to \cM(\bC)$ and $x+iy \in (K_c)_n$ we have
                \begin{align*}
                    \omega(x+iy) 
                    &= \omega(u_n^* c(x+iy) u_n)
                \\&=   u_n^*\omega( c(x+iy))  u_n
                \\&= \Psi((\re \circ \omega_{|K})+ i (\im \circ \omega_{|K}))(x+iy) = \Psi(\Gamma (\omega))(x+iy) .
                \end{align*}
                Conversely for $f+ig \in A(K)_c$ we have
                \begin{align*}
                    \Gamma(\Psi(f+ig))
                    &= \re \Psi(f+ig)_{|K} + i \im \Psi(f+ig)_{|K}
                    \\&= f+ig
                \end{align*}
                where the last equality comes for instance from the fact that 
                \begin{align*}
                    \re \Psi(f+ig)(x) 
                    &= \re \frac{1}{\sqrt{2}} \begin{bmatrix}
                    1_n & i \cdot 1_n
                \end{bmatrix}
                \big(f(x \oplus x) + i g(x \oplus x)\big)
                \frac{1}{\sqrt{2}} \begin{bmatrix}
                1_n \\ -i \cdot 1_n
            \end{bmatrix}
            \\&= \re (f(x) + ig(x)) = f(x) .
                \end{align*}
                Finally, we need to show that $\Gamma$ is ucp. It is unital because $\Psi$ is unital. If $[\omega_{nm}] \in M_N(A(K_c))$ is positive then it maps all $x+iy \in K_c$ to positive matrices. So, for $x \in K$ we have 
                
                \[c([\re \circ \omega_{nm} + i\im \circ \omega_{nm}])(x) = c([\re(\omega_{nm}(x)) + i \im (\omega_{nm}(x))]) = c([\omega_{nm}(x)])\]
                Now $[\omega_{nm}(x)]$ is positive in $M_N(\bC)$ and so $c([\omega_{nm}(x)]$ is positive in $M_{2N}(\bR)$. Therefore, $c([\Gamma(\omega_{nm})])$ is a positive element of $M_2(A(K))$ which by definition means $[\Gamma(\omega_{nm})]$ is positive in $A(K)_c$.            \end{proof}

        \section{Real Categorical Duality}\label{SecCatDuality}

            For any real compact nc convex set $K$, $A(K)$ will be a real operator system by Section
             \ref{AffAreOpSys}. On the other hand, given a real operator system $V$, $\ncS(V)$ will be a real compact nc convex set. We have the following duality extending the complex case in Theorem 3.2.2 of \cite{DK}:
            
            \begin{theorem}\label{AffineCateg}
                Let K be a (real or complex) compact nc convex set, then $K \cong \ncS(A(K))$ via the complex affine homeomorphism $\Lambda: K \to \ncS(A(K))$ where
                \[\Lambda(x)(\varphi)=\varphi(x)\]
                for $x \in K$ and $\varphi \in A(K)$.
            \end{theorem}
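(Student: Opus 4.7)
The plan is to adapt Davidson and Kennedy's proof of Theorem 3.2.2 of \cite{DK} to the real setting, with Theorem \ref{separationthrm} providing the key geometric input; the authors also indicate that one could instead deduce the real case from the complex one via complexification, and I will describe that alternative at the end. The essential content is that the evaluation map $\Lambda$ is bijective, continuous, and nc affine; the soft part of the argument is the easy direction.

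First I would check that $\Lambda$ has the right codomain and good functional properties. For $x \in K_n$, the map $\varphi \mapsto \varphi(x)$ sends $A(K)$ to $M_n(\bR)$ linearly and unitally (the unit of $A(K)$ evaluated at $x \in K_n$ is $1_n$), and preserves matrix positivity by the very definition of the matrix order on $A(K)$: $[\varphi_{ij}] \in M_m(A(K))^+$ means $[\varphi_{ij}(x)] \geq 0$ for every $x \in K$, which is exactly $\Lambda^{(m)}(x)([\varphi_{ij}]) \geq 0$. Thus $\Lambda(x) \in \ncS(A(K))_n$. That $\Lambda$ is graded and nc affine is immediate since each $\varphi \in A(K)$ is nc affine: e.g., $\Lambda(\beta^\tran x \beta)(\varphi) = \varphi(\beta^\tran x \beta) = \beta^\tran \Lambda(x)(\varphi) \beta$. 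Point-weak$^*$ continuity of $\Lambda|_{K_n}$ follows from the level-wise continuity of each $\varphi$.

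The heart of the proof is bijectivity. For injectivity, one observes that $A(K)$ separates points of $K$: a normal continuous linear functional $\psi$ on the ambient dual operator space $E$ amplifies to an element of $A(K)$ sending $x \in K_n$ to $[\psi(x_{ij})] \in M_n(\bR)$, and enough such $\psi$ separate entries of matrices in $M_n(E)$. Hence if $\varphi(x) = \varphi(y)$ for all $\varphi \in A(K)$ then $x = y$. For surjectivity, $\Lambda(K)$ is a closed (by compactness of each $K_n$ and continuity of $\Lambda$) nc convex subset of $\ncS(A(K))$. If some $\rho \in \ncS(A(K))_n$ failed to lie in $\Lambda(K)_n$, Theorem \ref{separationthrm} applied inside the dual operator space $A(K)^*$ would produce a normal cb map $\Phi : A(K)^* \to M_n(\bR)$ and $\gamma \in M_n(\bR)_{\rm sa}$ separating $\rho$ from $\Lambda(K)$. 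Identifying $\Phi$ via normality with an element $[\varphi_{ij}] \in M_n(A(K))$, the separation inequality $\Rea \, \Phi_p(\Lambda(x)) \leq 1_p \otimes \gamma$ for every $x \in K_p$ translates into $1 \otimes \gamma - \Rea \, [\varphi_{ij}]$ being matrix-positive in $A(K)$; then complete positivity of $\rho$ forces $\Rea \, \Phi_n(\rho) \leq 1_n \otimes \gamma$, a contradiction. The main obstacle is this last step: one has to be careful about exactly how a normal cb map on $A(K)^*$ is represented by a matrix in $M_n(A(K))$, and about the matrix-order interpretation of the inequality.

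Alternatively, the real duality can be deduced from Davidson and Kennedy's complex version applied to $K_c$, then pulled back using Theorem \ref{AffineCommC} ($A(K_c) \cong A(K)_c$) and Lemma \ref{ncSCommC} ($\ncS(A(K)_c) \cong \ncS(A(K))_c$) to yield $K_c \cong \ncS(A(K))_c$. Taking fixed points of the canonical period-$2$ real nc affine homeomorphism $\theta_{K_c}$ and its counterpart on $\ncS(A(K))_c$, Theorem \ref{unco} recovers $K \cong \ncS(A(K))$. Here the obstacle is purely bookkeeping: one must verify that the composite isomorphism intertwines the two involutions, i.e., that all the identifications above are natural with respect to complexification.
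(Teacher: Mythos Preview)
Your proposal is correct and covers the same two routes the paper uses, just with the emphasis reversed: the paper's main argument is the complexification diagram chase (your alternative), showing that the composite $K_c \to \ncS(A(K_c)) \to \ncS(A(K)_c) \to \ncS(A(K))_c$ carries $\iota(K)$ onto $\iota(\ncS(A(K)))$, while your primary route (adapting Davidson--Kennedy's separation argument directly via Theorem~\ref{separationthrm}) is exactly what the paper mentions in its Remark as the other valid method. Your identification of the delicate step---matching the normal cb map on $A(K)^*$ with an element of $M_n(A(K))$ and reading the separation inequality as matrix positivity---is accurate, and in the complexification route your ``bookkeeping'' obstacle (intertwining the involutions) is precisely what the paper's explicit diagram chase handles.
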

            
            Conversely we have (extending Theorem 3.2.3 in \cite{DK}):
            
            \begin{theorem}\label{ncSCateg}
                Let $V$ be a closed (real or complex) operator system. For $v \in V$ define the function $\hat{v}: \ncS(V) \to \cM(\bF)$ by
                \[\hat{v}(\varphi) = \varphi(v)\]
                for $\varphi \in \ncS(V)$. This map is a continuous nc affine function. The map $\hat{\phantom{v}}: V \to A(\ncS(V))$ taking $v$ to $\hat{v}$ is a ucoi.
            \end{theorem}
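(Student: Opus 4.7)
The plan is to handle the elementary pieces directly and then upgrade to a ucoi by reducing to the complex case (\cite[Theorem 3.2.3]{DK}) via complexification.

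First I would verify the basic claims: that $\hat{v}$ is a continuous nc affine function on $\ncS(V)$, and that $v \mapsto \hat{v}$ is linear, unital, selfadjoint, and completely positive. Grading and continuity of $\hat{v}$ are immediate. The compression equivariance $\hat{v}(\beta^\tran \varphi \beta) = \beta^\tran \hat{v}(\varphi) \beta$ and its direct-sum analogue come straight from the definition of nc convex combinations of matrix states. Linearity is obvious; unitality follows from $\hat{1}(\varphi) = \varphi(1) = 1_n$; and complete positivity follows because matrix states are themselves ucp, so if $[v_{ij}] \in M_n(V)^+$ then $[\hat{v}_{ij}(\varphi)] = \varphi^{(n)}([v_{ij}]) \geq 0$ for every $\varphi \in \ncS(V)$.

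The main work is to upgrade this to a ucoi. In the complex case this is \cite[Theorem 3.2.3]{DK}, which I would exploit. Applied to $V_c$ it yields a complex ucoi $V_c \to A(\ncS(V_c))$, $w \mapsto \hat{w}$. Lemma \ref{ncSCommC} and Theorem \ref{AffineCommC} provide further complex ucoi
\[A(\ncS(V_c)) \cong A(\ncS(V)_c) \cong A(\ncS(V))_c,\]
the first being composition with $\psi$ and the second being $\Gamma$. Composing gives a complex ucoi $\Phi : V_c \to A(\ncS(V))_c$. A short calculation with the explicit formulas for $\psi$ and $\Gamma$ shows that for $v \in V \subset V_c$ one has $\Phi(v) = \hat{v} + i 0$, so $\Phi$ restricts on $V$ to $v \mapsto \hat{v}$ and in particular lands in $A(\ncS(V))$.

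To finish, I would invoke the uniqueness of reasonable complexifications (Theorem \ref{unco} and its operator system analogue in \cite{BR}): $V \subset V_c$ is the fixed-point set of the canonical conjugation $\theta_{V_c}$, and $A(\ncS(V)) \subset A(\ncS(V))_c$ is the fixed-point set of the canonical conjugation on the complexified real operator system. It then suffices to check that $\Phi$ intertwines these two conjugations, whence $\Phi$ restricts to a real ucoi $V \to A(\ncS(V))$, which is $v \mapsto \hat{v}$ by the previous step. The main obstacle is this intertwining verification; both involutions act by sign-flipping the imaginary part, and the bridging maps $\psi$ and $\Gamma$ were designed with exactly the compatible formulas involving $\re$ and $\im$, so the verification reduces to a diagram chase. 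As an alternative I would note one can bypass complexification entirely and reprove \cite[Theorem 3.2.3]{DK} in the real setting, using the real nc separation theorem (Theorem \ref{separationthrm}) in place of its complex counterpart.
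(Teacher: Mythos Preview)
Your proposal is correct and follows essentially the same route as the paper: reduce to the complex case \cite[Theorem 3.2.3]{DK} applied to $V_c$, bridge via Lemma \ref{ncSCommC} and Theorem \ref{AffineCommC}, and verify by a diagram chase that the composite restricts on $V$ to $v \mapsto \hat v$. One small simplification: once you have established $\Phi(v)=\hat v + i0$ for $v\in V$, complex linearity of $\Phi$ immediately gives $\Phi = (\hat{\phantom{v}})_c$, so the intertwining of conjugations and the appeal to Theorem \ref{unco} are automatic rather than something requiring a separate verification.
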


            We first prove Theorem \ref{AffineCateg} in the real case.  For a real compact nc convex set, we have that $K_c$ is a complex nc convex set and therefore isomorphic to $\ncS(A(K_c))$ which in turn is isomorphic to $\ncS(A(K))_c$. We have the embedding $\iota$ of our real nc convex sets into their complexification, so we just need to make sure $\iota(K)$ is mapped onto $\iota(\ncS(A(K))$ through the above isomorphisms. Or, the following diagram commutes
            \[
            \begin{tikzcd}
            K_c \arrow[r, "\Lambda"] & \ncS(A(K_c)) \arrow[r, "\Psi^*"] & \ncS(A(K)_c) \arrow[r, leftarrow, "\psi"] & \ncS(A(K))_c \\
            K \arrow[u, hook, "\iota"] \arrow[rrr, "\Lambda"] & & & \ncS(A(K)) \arrow[u, hook', "\iota"]
            \end{tikzcd}
            \]
            Here, $\Psi^*: \ncS(A(K_c))$ is defined by $\Psi^*(f)(x+iy) = f(\Psi(x+iy))$. So now we just diagram chase. Let $k \in K_N$, then going to the right, for $f+ig \in A(K)_c$ we have
            \begin{align*}
                \psi(\Lambda(k)+i0)(f+ig)
                &= \Lambda(k)(f) + i \Lambda(k)(g) = f(k) + ig(k) .
            \end{align*}
            Going up we have:
            \begin{align*}
                \Psi^*(\Lambda(k+i0))(f+ig)
                &= \Lambda(k+i0)(\Psi(f+ig))
                \\&= \Psi(f+ig)(k+i0)
                \\&=  \frac{1}{2} \begin{bmatrix}
                    1_N & i \cdot 1_N
                \end{bmatrix}
                \big(f(c(k+i0)) + i g(c(k+i0))\big) \begin{bmatrix}
                1_N \\ -i \cdot 1_N
            \end{bmatrix}
            \\&= \frac{1}{2} \begin{bmatrix}
                    1_N & i \cdot 1_N
                \end{bmatrix}
                \big(f(k \oplus k) + i g(k \oplus k)\big) \begin{bmatrix}
                1_N \\ -i \cdot 1_N
            \end{bmatrix}
            \\&= \frac{1}{2} \begin{bmatrix}
                    1_N & i \cdot 1_N
                \end{bmatrix}
                \big(f(k) \oplus f(k) + i g(k) \oplus g(k)\big) \begin{bmatrix}
                1_N \\ -i \cdot 1_N
            \end{bmatrix}
            \\&= f(k) + ig(k) .
            \end{align*}
        Therefore, starting at the bottom left and going clockwise, is the same as going right and then anticlockwise in the diagram.  That is, the diagram commutes,   resulting in a nc affine homeomorphism between $K$ and $\ncS(A(K))$.
        For the reader's convenience we check surjectivity. 
        If $\varphi \in \ncS(A(K))$ then $\iota(\varphi) \in \ncS(A(K))_c = 
            \ncS_{\bC}(A_{\bC}(K_c))$.  (Indeed 
            $\varphi_c$ is a matrix state of 
            $A(K)_c$, and thus gives a matrix state of 
            $A_{\bC}(K_c)$ by composition with the canonical map 
            $A_{\bC}(K_c) \to A(K)_c$.)  Thus, by Theorem 3.2.2 of \cite{DK} (or Theorem 16.6.4 of \cite{Dav})
            there exists $x + iy \in K_c$ 
                        mapping to $\iota(\varphi)$.  That is, for all
            $f \in A_{\bC}(K_c)$ we have 
            $$f(x + iy) = \iota(\varphi)(f) = \varphi({\rm Re} f_{|K}) + i \varphi({\rm Im} f_{|K}).$$  Here ${\rm Re}, {\rm Im}$ here are the real affine functions coming from Lemma \ref{reisAffine}. 
            In particular, replacing $f$ by $\epsilon(f) = f_c$ for $f \in A(K)$,  taking real parts,
            and remembering that $x \in K_n$, we have
            $f(x) = {\rm Re} (f(x) + i f(y)) = \varphi(f)$ for $f\in A(K).$
            That is $\Lambda(x) = \varphi$, so that $\Lambda$ is surjective.  

\bigskip

        {\bf Remark.} Alternatively, the
            the proof of Theorem 3.2.2 of \cite{DK} works in the real case.  
            
                      \bigskip

        The proof of Theorem \ref{ncSCateg} in the real case has a similar idea (although the proof in  \cite{DK,Dav} does not 
        work in the real case).  We want to show the following diagram commutes
        \[
            \begin{tikzcd}
            V_c \arrow[r, "\wedge"] & A(\ncS(V_c)) \arrow[r, "\psi^*"] & A(\ncS(V)_c) \arrow[r, leftarrow, "\Psi"] & A(\ncS(V))_c \\
            V \arrow[u, hook, "\iota"] \arrow[rrr, "\wedge"] & & & A(\ncS(V)) \arrow[u, hook', "\iota"]
            \end{tikzcd}
            \] 
        To show the diagram commutes, let $v \in V$. Going to the right we have
        \begin{align*}
            \Psi(\hat{v}+i0)(x+iy)
            &= \frac{1}{2} \begin{bmatrix}
                    1_n & i \cdot 1_n
                \end{bmatrix}
                \big(\hat{v}(c(x+iy)\big) \begin{bmatrix}
                1_n \\ -i \cdot 1_n
                \end{bmatrix}
            \\&= \frac{1}{2} \begin{bmatrix} 
                    1_n & i \cdot 1_n 
                \end{bmatrix} 
                \big(c(x(v)+iy(v))\big) \begin{bmatrix} 
                1_n \\ -i \cdot 1_n 
                \end{bmatrix} 
            \\&= x(v) + iy(v) 
        \end{align*} 
        and going up we get 
        \begin{align*} 
            \psi^*(\widehat{v+i0})(x+iy) 
            &= (\widehat{v+i0})(\psi(x+iy)) 
            \\&= \psi(x+iy)(v+i0) 
            \\&= x(v) + iy(v) 
        \end{align*} 
        To show the surjectivity of $\wedge$ from the diagram, first take $f+i0 \in A(\ncS(V))_c$. Via the diagram, $f+i0$ corresponds to evaluation at some $x+iy \in V_c$. Via the canonical evaluation map and $\psi^*$ we have that
    \[\psi^*(\widehat{x+iy})(\varphi + i \theta) = \varphi(x) - \theta(y) + i \varphi(y) + i \theta(x).\]
    Applying $\Gamma$ to this gives
    $(\varphi \mapsto \varphi(x)) + i (\varphi \mapsto \varphi(y)) \in A(\ncS(V))_c$.
    This must equal $f+i0 \in A(\ncS(V))_c$.
    This forces $f = \hat{x}$ and $y = 0$.   Indeed for any element $\varphi \in \ncS(V)$ we have $\varphi(y) = 0$. This implies $y = 0$ by the fact in Example 2.2 (or e.g.\ \cite[Corollary 3.2]{BR}). This shows surjectivity.

        It follows as in \cite{DK,Dav} that  the categories NCConv$_{\bR}$ (real compact nc convex sets and continuous affine nc maps) and OpSy$_{\bR}$ (real operator systems
        and ucp maps) are dually equivalent via a contravariant functor.  Thus for example 
        $f \mapsto f \circ \tau :  A(K_2) \to A(K_1)$, composition of $f \in A(K_2)$ with a continuous affine nc map $\tau : K_1 \to K_2$, is ucp.  For compact nc convex sets $K$ and $L$ we have  $A(K)$  and $A(L)$ unitally complete order isomorphic if and only if $K$ and $L$ are affinely homeomorphic. Hence two operator systems are isomorphic if and only if their nc state spaces are affinely homeomorphic.

\smallskip

 {\bf Remark.} If we view a complex nc compact convex $K$ as a real nc compact convex $K_r,$ then $K_r$ corresponds by this duality to a real operator system $W$.    Can one describe $W$ simply in terms of the  complex operator system $V$ associated with $K$? 

\smallskip

        We now characterize the real compact nc convex sets which correspond to real operator systems which are the selfadjoint part of another operator system.

        \begin{cor} \label{coti} A real compact  nc set $K$  corresponds under the duality above to a real operator system with trivial involution if and only if every $k \in K$ is symmetric (that is, $k = k^\tran$). 
            \end{cor}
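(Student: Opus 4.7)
The plan is to use the duality $K \cong \ncS(A(K))$ from Theorem \ref{AffineCateg} to translate the condition ``every $k \in K$ is symmetric'' into the triviality of the involution on $V = A(K)$. First, I identify $K$ with $\ncS(V)$ sitting canonically inside $\cM(V^*)$, so each $k \in K_n$ corresponds to a real ucp map $\phi = \Lambda(k) : V \to M_n(\bR)$ via the identification $M_n(V^*) \cong \mathrm{CB}(V, M_n(\bR))$. Under this identification, matrix transpose on $M_n(V^*)$ is simply post-composition with transpose on $M_n(\bR)$: $\phi^\tran(v) = \phi(v)^\tran$. Since transpose on $M_n(\bR)$ is itself ucp, $\phi^\tran$ is ucp whenever $\phi$ is, so $K$ is closed under transpose and the condition $k = k^\tran$ makes intrinsic sense on $K$.

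Next, I use that $\phi$ is selfadjoint as a ucp map of real operator systems (this may be verified by complexifying $\phi$ and appealing to the complex case), together with the fact that the involution on $M_n(\bR)$ is the transpose. This yields $\phi(v^*) = \phi(v)^* = \phi(v)^\tran$ for every $v \in V$, and hence the chain of equivalences
\[
k = k^\tran \;\iff\; \phi = \phi^\tran \;\iff\; \phi(v)^\tran = \phi(v) \text{ for all } v \;\iff\; \phi(v^*) = \phi(v) \text{ for all } v.
\]

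Every $k \in K$ being symmetric is therefore equivalent to $\phi(v - v^*) = 0$ for every matrix state $\phi$ of $V$ and every $v \in V$. By Example \ref{MatStates}, the real matrix states norm $V$ and in particular separate its points, so this last condition is equivalent to $v = v^*$ for every $v \in V$, i.e.\ $V = A(K)$ has trivial involution. Both directions of the corollary fall out of this single chain of equivalences. The only real step requiring care is the setup in the first paragraph, namely interpreting $k^\tran$ intrinsically on $K$ and checking that $K$ is closed under transpose; once that is in place, the remainder of the argument is a clean chain of equivalences driven by the selfadjointness of matrix states and the separation property from Example \ref{MatStates}, and I do not expect any further obstacle.
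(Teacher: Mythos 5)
Your core argument is the same as the paper's: for a ucp map $\varphi : V \to M_n(\bR)$ one has $\varphi(v^*) = \varphi(v)^* = \varphi(v)^\tran$ (selfadjointness of ucp maps, plus the fact that the involution on $M_n(\bR)$ is the transpose), so under $K \cong \ncS(A(K))$ the symmetry of every point is equivalent to $\varphi(v - v^*) = 0$ for all matrix states $\varphi$ and all $v$, and since the matrix states separate points (Example \ref{MatStates}) this forces trivial involution; the converse is the same identity read in the other direction. This is precisely the proof in the paper, written out in more detail.

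One side remark in your setup is, however, false: the transpose map on $M_n(\bR)$ is unital and positive but \emph{not} completely positive for $n \geq 2$. The standard complex counterexample is entirely real: the positive matrix $\sum_{i,j} e_{ij} \otimes e_{ij}$ (a multiple of a rank-one projection) has block (partial) transpose $\sum_{i,j} e_{ij} \otimes e_{ji}$, the flip operator, which has eigenvalue $-1$. Consequently $\phi^\tran = \phi(\cdot)^\tran = \phi \circ *$ need not be ucp when $\phi$ is, and $\ncS(V)$ need not be closed under transpose (take $V = M_2(\bR)$ and $\phi$ the identity map, whose ``transpose'' is the transpose map itself). Fortunately this closure claim, which you single out as the step requiring care, is not actually needed: the transpose is defined on all of $M_n(V^*)$, so the condition $k = k^\tran$ makes sense for each $k \in K$ regardless of whether $k^\tran$ lies in $K$, and the remainder of your chain of equivalences goes through unchanged.
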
 

            \begin{proof}  For ucp $\varphi : V \to M_n$ we have $$\varphi(x)^\tran = \varphi(x^*) \qquad x \in V.$$  Thus if $V$ has trivial involution then 
            $K = \ncS(V)$ is symmetric. 
            Similarly, if the latter is symmetric 
            then $\varphi(x^*) = \varphi(x)$ for all 
            such $\varphi$ and $x \in V$, so that $x = x^*$. 
                \end{proof} 
                
                                We shall call real compact  nc sets satisfying the condition in the last result {\em symmetric}.

If  $K$ is a real nc convex set then the complexification $K_c$ of $K$ consists of elements $z= x + iy$ where $c(x,y) \in K$.   We call the set of such 
$x$ (resp.\ $y$) the {\em real part} (resp.\ {\em imaginary part}) of $K_c$.  These are also nc convex sets. 
The imaginary part of $K_c$ is a bit mysterious, and the following discussion is intended to clarify part of the mystery. 
If $K$ is also compact then we may assume by the duality above  that 
$K$ is the nc state space ${\rm ncS}(V)$ of a real  operator system $V$.  That is, 
$K_n = {\rm UCP}(V,M_n(\bR))$.  Let $W = B(H) = M_n(\bR)$ for a cardinal $n$.  Let $C$ be the 
real nc convex set with $C_n = {\rm CP}(V,M_n(\bR))$. 
Write $D = \{ u \in {\rm CB}(V,M_n(\bR)) : u^* = -u , u(1) = 0 \}$,  with its canonical operator space structure, 
and let $\bB_0$ be the set of matrix unit balls for $D$ (see Example \ref{Matball}), a real nc convex compact set.  
We mention a result from \cite{BP}: 

\begin{theorem} \label{coco} {\rm \cite[Theorem 4.2]{BP} } We have that $\bB_0$ is the `imaginary part' 
of the complexification $K_c$ of the real nc compact convex set $K$ above. 
Also, $D$ is the imaginary part of the complexification $C_c$ of the real nc convex set $C$ above. 
\end{theorem} 

Of course the `real parts' of these nc sets are respectively $K$ and $C$. 
  It seems remarkable that every 
map in $\bB(D)$ is the imaginary part of a ucp map.

        \section{Bipolar Theorem}\label{SecBipolarThrm}

        \begin{subsection}{Real Bipolar Theorem}

            Let $E$ be a real dual operator space and $K$ a real nc set over $E$. The polar of $K$ is a real nc convex set over $E^*$ defined by 
            \[K_n^\circ = \{ \varphi \in M_n(E^*):  \Rea \, \varphi^{(m)}(v) \leq 1_{nm} \text{ for all } v \in K_m, m \leq \kappa\}\]
            The set $K^\circ = \bigsqcup_n K_n^\circ$ is a closed real nc convex set. This definition is the same as in the complex case, except that Effros and Winkler only consider finite $n, m$ in their definition.  Note though that  if 
            $\Rea \, \varphi^{(m)}(v) \leq 1_{nm}$ for all finite $m$ and $v \in K_m,$ then it is easy to argue that 
            we have the same relation
            for infinite $m$.
            Thus we may take $m < \infty$ in the definition above.  Hence the definition makes sense
            and produces a closed real nc convex set even if $K$ is only a matrix convex set.
            
            \begin{prop}
                Let $K$ be a real nc set (or real matrix set) in the real dual operator space $E$. Then $(K^\circ)_c \cong (K_c)^\circ$ via the same maps as in Lemma {\rm \ref{ncSCommC}}.
            \end{prop}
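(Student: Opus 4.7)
The plan is to exhibit the desired isomorphism via the canonical bijective $w^*$-continuous complex nc affine map $\psi$ from Lemma \ref{ncSCommC}, whose proof goes through verbatim with an arbitrary real dual operator space $E$ in place of $V^*$ (since it only uses the isomorphism $CB(E,\bR)_c \cong CB(E_c,\bC)$). What needs to be verified is that $\psi$ restricts at each matrix level to a bijection $(K^\circ)_c \to (K_c)^\circ$.

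The main computation is the block identity, valid for $f+ig \in M_n((E^*)_c)$ and $v \in M_q(E)$,
\begin{equation*}
\Rea\, c(f,g)^{(q)}(v) \;=\; c\bigl(\Rea\, (f+ig)^{(q)}(v)\bigr),
\end{equation*}
where $\Rea$ on the left denotes the real-symmetric part $(\,\cdot\, + \,\cdot\,^\tran)/2$ and on the right the complex-Hermitian part $(\,\cdot\, + \,\cdot\,^*)/2$. This follows by a direct block calculation using $c(A+iB) = \bigl[\begin{smallmatrix} A & -B \\ B & A \end{smallmatrix}\bigr]$, after observing that $c(f,g)^{(q)}(v)$ rearranges as $c(f^{(q)}(v), g^{(q)}(v))$ in $M_{2nq}(\bR)$. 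Since $c \colon M_{nq}(\bC) \hookrightarrow M_{2nq}(\bR)$ is a real unital $*$-embedding (so that $c(X) \leq 1_{2nq}$ if and only if $X \leq 1_{nq}$ for Hermitian $X$), combining these gives the key equivalence
\begin{equation*}
\Rea\, c(f,g)^{(q)}(v) \leq 1_{2nq} \iff \Rea\, \psi(f+ig)^{(q)}(\iota(v)) \leq 1_{nq},
\end{equation*}
for every $q$ and $v \in M_q(E)$ (using that $\psi(f+ig)^{(q)}(\iota(v)) = (f+ig)^{(q)}(v)$).

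For the inclusion $\psi^{-1}((K_c)^\circ) \subseteq (K^\circ)_c$, specialize the polar condition on $\psi(f+ig)$ to points $\iota(v)$ for $v \in K_q$ and all $q$; the equivalence then yields $c(f,g) \in (K^\circ)_{2n}$, i.e.\ $f+ig \in ((K^\circ)_c)_n$. For the opposite inclusion $\psi((K^\circ)_c) \subseteq (K_c)^\circ$, the equivalence gives the polar condition for $\psi(f+ig)$ on $\iota(K)$; observe that the set of $w \in \cM(E_c)$ satisfying $\Rea\, \psi(f+ig)^{(m)}(w) \leq 1_{nm}$ at each level $m$ is closed under complex nc convex combinations, since $\beta^*\Rea(A)\beta = \Rea(\beta^* A \beta)$ for any complex matrix $\beta$. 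By Theorem \ref{ncrealaffinecoml}, $K_c$ is the complex nc convex hull of $\iota(K)$, so the polar condition extends from $\iota(K)$ to all of $K_c$.

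The main obstacle is simply the bookkeeping: keeping straight the two distinct meanings of $\Rea$ (real-symmetric vs.\ complex-Hermitian part) and the reshuffling between $M_q(M_{2n}(\bR))$ and $M_{2n}(M_q(\bR))$ hidden inside the amplification $c(f,g)^{(q)}$. Once the block identity above is established, the forward and reverse inclusions follow formally, and the matrix-convex variant is covered by the same argument restricted to finite $m, q$.
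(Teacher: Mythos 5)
Your proposal is correct and follows essentially the same route as the paper: both use the maps $\psi$ and $\gamma$ from Lemma \ref{ncSCommC}, the identity $\Rea\, c(f,g)^{(q)}(v) = c(\Rea\,(f+ig)^{(q)}(v))$ together with the fact that $c$ is a unital order embedding, to get both inclusions. Your only deviation is packaging the second inclusion via $K_c = {\rm co}_{\bC}(\iota(K))$ and closure of the polar condition under nc convex combinations, which is just a conceptual restatement of the paper's explicit compression of $c(f+ig)^{(2N)}(c(x+iy))$ by the isometry $u_N$.
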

            \begin{proof} Of course 
                $K_c$ is a complex nc convex set containing $0_{E_c} \in E_c$. 
                Let $\gamma: (K_c)^\circ \to (K^\circ)_c$ taking $\omega$ to $\re \omega_{|E} + i \im \omega_{|E}$, which will have inverse $\psi$.   To see that $\gamma$ maps into $(K^\circ)_c$, let  $\omega \in (K_c)^\circ_n$ and $v \in K_m$.  We have 
                \begin{align*}
                    \Rea \, c(\gamma(\omega))^{(m)}(v) 
                    &= \Rea \, c(\re \omega^{(m)}(v+i0) + i \im \omega^{(m)}(v+i0))
                    \\&= \Rea \, c(\omega^{(m)}(v+i0))
                    \\&= c(\Rea \, \omega^{(m)}(v+i0)).
                \end{align*}
                We also have
                \[c(\Rea \, \omega^{(m)}(v+i0)) \leq 1_{m\cdot2n} \iff \Rea \, \omega^{(m)}(v+i0) \leq 1_{mn} .\]
                However if $v \in K$ then $v+i0 \in K_c$, and so this is true. So $\gamma$ maps into $(K^\circ)_c$.  We saw in Lemma $\ref{ncSCommC}$ that $\gamma$ is a bijective continuous affine map with  continuous inverse $\psi$.  Hence we will be done if   $\psi((K^\circ)_c) \subseteq (K^\circ)_c$.

                The inverse map $\psi: (K^{\circ})_c \to (K_c)^\circ$ takes $f+ig \in ((K^{\circ})_c)_N$ and $x+iy 
                \in E_c$ to  $$\psi(f+ig)(x+iy) = f(x) - g(y) + i f(y) + i g(x).$$ To show this indeed maps into $(K_c)^\circ$, suppose that $x+iy = [x_{nm} + i y_{nm}] \in (K_c)_M$,  so that $c(x+iy) \in K_{2M}$. We want to show that $$\Rea \, [f(x_{nm}) - g(y_{nm}) + i f(y_{nm}) + i g(x_{nm})] \leq 1_{NM}.$$ However, as shown in Lemma \ref{ncSCommC} we have
                \begin{align*}
                \Rea \, [&f(x_{nm}) - g(y_{nm}) + i f(y_{nm}) + i g(x_{nm})] 
                \\&= \Rea \, u_N^*\Big(\begin{bmatrix}
                f(x_{nm}) & -g(x_{nm}) \\ g(x_{nm}) & f(x_{nm})
                \end{bmatrix} 
                + i 
                \begin{bmatrix}
                    f(y_{nm}) & -g(y_{nm}) \\ g(y_{nm}) & f(y_{nm})
                \end{bmatrix} \Big) u_N
                \\&= \Rea \,u_N^*\big(c(f+ig)^{(M)}(x)+ic(f+ig)^{(M)}(y)\big) u_N\\
                &= u_N^* \Rea \, \big(c(f+ig)^{(M)}(x)+ic(f+ig)^{(M)}(y)\big) u_N .
                \end{align*}
                Because $u_n$ is an isometry we see that the latter is $\leq 1_{NM}$
                             as for $f+ig \in (K^\circ)_c$ and $x+iy \in K_c$ we have that
                \begin{align*}
                \Rea \, c(f+ig)^{(2N)}(c(x+iy)) = 
                    \Rea \, \begin{bmatrix}
                        c(f+ig)^{(N)}(x) & - c(f+ig)^{(N)}(y)\\
                        c(f+ig)^{(N)}(y) & c(f+ig)^{(N)}(x)
                    \end{bmatrix}  \leq 1_{4NM} .
                \end{align*}
                This completes the proof.            \end{proof}

            \begin{theorem}[Bipolar Theorem] \label{bipo} 
                Let $K\subseteq E$ be a closed real or 
                                complex nc convex set containing $0_E \in E$. Then $K^{\circ \circ } \cong K$.
            \end{theorem}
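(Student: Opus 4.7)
The plan is to establish the two inclusions $K \subseteq K^{\circ\circ}$ and $K^{\circ\circ} \subseteq K$ separately, the first by unwinding definitions and the second by invoking the real nc separation theorem (Theorem \ref{separationthrm}). The underlying identification is that $K^{\circ\circ}$ lives naturally in $\cM(E)$ via the canonical isometric embedding $E \hookrightarrow E^{**}$, which is where the statement $K^{\circ\circ} \cong K$ is to be interpreted.

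For the easy inclusion I would just swap the roles of the two arguments in the defining inequality of the polar: if $v \in K_m$, then for every $n$ and every $\varphi \in K^\circ_n$ one has $\Rea \, \varphi^{(m)}(v) \leq 1_{nm}$ by definition of $K^\circ$, and this is exactly the membership condition for $v \in K^{\circ\circ}_m$. Nothing more than pairing symmetry is needed here.

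For the reverse inclusion I would argue contrapositively. Suppose $y \in M_n(E) \setminus K_n$. Because $K$ is closed and real nc convex and contains $0_E$, Theorem \ref{separationthrm} yields a normal completely bounded map $\varphi : E \to M_n(\bR)$ with $\gamma = I_n$, so that $\Rea \, \varphi^{(p)}(x) \leq 1_{np}$ for every $p$ and every $x \in K_p$, while $\Rea \, \varphi^{(n)}(y) \not\leq 1_{n^2}$. Under the identification $M_n(E^*) \cong CB(E, M_n(\bR))$ the first condition says precisely that the element $\tilde{\varphi} \in M_n(E^*)$ corresponding to $\varphi$ belongs to $K^\circ_n$, and the second says that $y$ fails the defining inequality for $K^{\circ\circ}_n$ tested against $\tilde{\varphi}$. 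So $y \notin K^{\circ\circ}_n$, as required.

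The main obstacle will be bookkeeping: matching the normalization in the separation theorem (where the bound on the real part is $1_p \otimes \gamma$) with the normalization in the definition of the polar (where the bound is $1_{nm}$), which is exactly why the hypothesis $0_E \in K$ is used to take $\gamma = I_n$. One should also verify that $K^{\circ\circ}$ does not pick up spurious elements from the bidual $E^{**} \setminus E$; this is handled by testing only against $y \in M_n(E)$ in the separation step, and noting that under the natural embedding $E \hookrightarrow E^{**}$ the bipolar is really computed inside $\cM(E)$. As a cross-check, one can also give a complexification proof: applying the preceding proposition twice gives $(K^{\circ\circ})_c \cong ((K_c)^\circ)^\circ = (K_c)^{\circ\circ}$, and the complex bipolar theorem of \cite{EW} together with the uniqueness of the reasonable complexification (Theorem \ref{unco}) then forces $K^{\circ\circ} \cong K$.
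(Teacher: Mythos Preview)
Your proposal is correct and follows essentially the same approach as the paper: both establish $K \subseteq K^{\circ\circ}$ trivially and then use the separation theorem (Theorem \ref{separationthrm}), with $\gamma = I_n$ thanks to $0_E \in K$, to show that any $y \notin K_n$ is separated from $K^{\circ\circ}_n$ by some $\varphi \in K^\circ_n$. Your additional bookkeeping remarks about the bidual and the complexification cross-check are sound extras not present in the paper's terse proof.
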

            \begin{proof}  
            Clearly  $K \subseteq K^{\circ \circ}$.
            If $x \in (K^{\circ \circ})_n \setminus K_n$ then by Theorem \ref{separationthrm} there exists  a normal completely bounded map $\varphi: E \to M_n(\bF)$ such that $\Rea \, \varphi_n(x) \not \leq  1_n \otimes 1_n$ but for all $p$ and $k \in K_p$ we have $\Rea \, \varphi_p(k) \leq 1_p \otimes 1_n$.  
            Then $\varphi \in (K^\circ)_n$, and we obtain the contradiction 
            $\varphi_n(x) \leq  1_n \otimes 1_n$. So 
                $K = K^{\circ \circ}$.            \end{proof}

        Effros and Winkler's application of  the bipolar theorem in \cite[Section 5]{EW} essentially works for us too.  That is a weakly compact nc convex set $L$ with $L_1=K$ is sandwiched between  minimal and maximal nc convex sets which are $K$ at level 1.   Effros and Winkler write these as
        $\hat{L}$ and $\check{L}$. 
        We recall their construction.   We will assume that $L$ is a weak* closed (or weak* compact) real or complex nc convex set in a dual operator space $E$.   By translating if necessary by a fixed vector in $K$ 
        one may assume without loss that $0 \in L$.    We then define the desired minimal nc convex set $\hat{L}$  to be the closed convex hull $C$ in $E$ of 
   $L_1=K$.   The    `maximal one' is defined as the prepolar of  ``the `minimal one'  of the polar of $L$''.
   That is, $\check{L} = \widehat{L^\circ}$.  If we had previously translated to ensure that $0 \in L$, we must then apply the inverse of the above 
   translation to these minimal and maximal nc sets.        
   
   By the Bipolar Theorem \ref{bipo} as in \cite{EW} we see that 
   $\hat{L}$ and $\check{L}$ are respectively the smallest and largest closed nc convex set $D$ in $E$ with $D_1 = K$. 
          In the real case, though, this is sometimes more helpful under the restriction that the compact nc convex set corresponds to an operator system with trivial involution, as we will explain at the end of the next section.    Under this last restriction $\hat{L}$ agrees with a familiar nc set.   On the other hand,
    suppose that $L$ is     the noncommutative state space of the quaternions or of 
       the real $C^*$-algebra $\bC_r$ (these systems have nontrivial involution). 
       The reader can   check in these cases that $\hat{L}$ is a trivial compact nc convex set (a translate of 
   $(0)$), which  conveys little  useful information. 
   
         \end{subsection}

        \begin{section}{Max and Min nc convex sets}\label{SecMinMax}

        By a real function system (in Kadison's original sense)
        we mean (concretely) a unital subspace $\cS$ of $C_{\bR}(K)$ for a compact Hausdorff $K$, or abstractly (via Kadison's theorem), a (real) ordered vector space 
        $V$ with an archimedean order unit.  Note that no involution is mentioned here, indeed the canonical involution on $\cS$ is trivial (the identity map).
        We may view $V$ as having this trivial invoution.         Similarly a complex function system is (concretely) a unital selfadjoint subspace of $C_{\bC}(K)$ for compact  $K$, or abstractly 
                a complex $*$-vector space which is ordered
        (i.e.\ with a proper selfadjoint cone $E^+
                \subset E_{\rm sa}$), and possesses an archimedean order unit for $E_{\rm sa}$.
        The complex $*$-vector space version of Kadison's         characterization of Archimidean order unit spaces as function systems follows immediately from the real         case.
                (See e.g.\ Lemma 1.2 and the lines after it in the last paper mentioned in the Acknowledgements.  See also Theorem 1.1 and Lemma 1.2 in the other paper mentioned there.)  
                
        These form categories, with the morphisms being unital selfadjoint positive maps,
        or equivalently (by basic results in e.g.\ \cite[Section 2]{Pnbook} or \cite[Section II.1]{Alfsen}) unital selfadjoint contractions.  

     \begin{prop}\label{omincat}  
        The category of complex function systems is equivalent  to the category of real function systems.  Moreover every real function system has a unique reasonable  function system complexification, and every complex function system has unique real structure, that is, is the complexification of an essentially unique real function system. \end{prop}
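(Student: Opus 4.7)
The plan is to set up explicit inverse functors between the two categories and then verify the uniqueness claims. For a real function system $V$ (with trivial involution) with positive cone $V^+$ and archimedean order unit $e$, I would first form the complex vector space $V_c := V + iV$ and equip it with the conjugate-linear involution $(x+iy)^* = x - iy$, positive cone $V_c^+ := V^+$ (viewed inside $V_c$ via $x \mapsto x + i0$), and order unit $e$. Since the involution on $V$ is trivial, the selfadjoint elements of $V_c$ are precisely those with zero imaginary part, so $(V_c)_{\rm sa}$ is identified with $V$ as a real vector space. The cone $V_c^+$ is then automatically selfadjoint and proper (inheriting these from $V^+$), and $e$ remains an archimedean order unit since the defining inequality is only tested on selfadjoint elements, all of which live in $V$. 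Thus $V_c$ is a complex function system. The inverse functor sends a complex function system $W$ to $W_{\rm sa}$ with the restricted (trivial) involution, positive cone $W^+$ (which lies in $W_{\rm sa}$ by definition of a complex function system), and order unit $e$; this is manifestly a real function system.

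Next I would verify that these functors are naturally inverse. The map $(W_{\rm sa})_c \to W$ given by $x + iy \mapsto x + iy$ for $x, y \in W_{\rm sa}$ is bijective via the usual decomposition $w = \mathrm{Re}\, w + i\, \mathrm{Im}\, w$ where $\mathrm{Re}\, w = (w + w^*)/2$ and $\mathrm{Im}\, w = (w - w^*)/(2i)$. It preserves the involution, the order unit, and the cones (positive elements on both sides are selfadjoint elements of $V^+ = W^+$). The identification $(V_c)_{\rm sa} \cong V$ is analogous. For morphisms, a unital positive selfadjoint map $\varphi : V \to V'$ of real function systems extends to a complex linear, unital, $*$-preserving positive map $\varphi_c : V_c \to V'_c$ by $\varphi_c(x + iy) = \varphi(x) + i\varphi(y)$; positivity is immediate since positive elements of $V_c$ have zero imaginary part. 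Conversely, a unital positive selfadjoint map $\Phi : W \to W'$ restricts to $W_{\rm sa} \to W'_{\rm sa}$, and these two assignments are mutually inverse on objects and morphisms. This yields the categorical equivalence.

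Finally I would address uniqueness. A \emph{reasonable} complexification of $V$ should mean a complex function system structure on $V + iV$ with the standard involution $(x+iy)^* = x - iy$, under which the inclusion $V \to V_c$ is an order embedding and the conjugation $\theta(x+iy) = x - iy$ is an order isomorphism (the function system analogue of Ruan's reasonableness condition, paralleling Theorem \ref{unco}). Since the positive cone of any complex function system lies inside its selfadjoint part, and the selfadjoint part of $V_c$ is forced to be $V$ by the form of the involution, the reasonableness condition forces the cone on $V_c$ to coincide with $V^+$, giving uniqueness. For uniqueness of real structure, any real function system $V'$ with $V'_c \cong W$ reasonably must satisfy $V' \cong (V'_c)_{\rm sa} \cong W_{\rm sa}$ canonically, so the real form is essentially unique.

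The main conceptual point, which is essentially automatic in this level-1 setting, is that positivity in a complex function system lives entirely in the selfadjoint part, making complexification at the level of function systems nearly tautological; accordingly the main obstacle is simply pinning down the correct notion of reasonableness and checking it is equivalent to the natural cone condition. Once this is in hand, the proof reduces to bookkeeping already modeled on the operator system case treated earlier in the paper (in the spirit of Theorem \ref{unco} specialized to level $1$).
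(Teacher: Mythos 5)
Your proposal is correct, and its backbone — the two functors $V \mapsto V_c$ and $W \mapsto W_{\rm sa}$, shown to be mutually inverse on objects and morphisms — is the same as the paper's. The differences are in how the two uniqueness assertions are handled. For existence of the complexification the paper simply invokes the Taylor complexification (citing \cite{MMPS}) and a concrete realization of $W_{\rm sa}$ inside $C(K,\bC)$, whereas you build $V_c$ abstractly with the tautological cone $V_c^+ = V^+ \subseteq (V_c)_{\rm sa} = V$; this is legitimate given the paper's abstract definition of a complex function system, and it also makes the uniqueness of the reasonable complexification transparent, since once the involution is the standard one the cone must lie in $(V_c)_{\rm sa}=V$ and the order-embedding requirement pins it down to $V^+$ (you do have to supply a precise meaning of ``reasonable,'' which the paper leaves informal, but your choice is the natural level-one analogue of Ruan's condition and is consistent with the paper's usage). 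For uniqueness of the real structure the paper argues internally: it takes the conjugation $u$ whose fixed points are the given real form, restricts it to $w = u|_{V_{\rm sa}}$, and composes to get $u\circ w_c = \theta$, the standard conjugation fixing $V_{\rm sa}$. Your route is more direct and arguably cleaner: an isomorphism $V'_c \cong W$ in the category is unital, positive and (automatically, via the order unit) selfadjoint, hence restricts to a unital order isomorphism of selfadjoint parts, giving $V' \cong W_{\rm sa}$ outright; implicitly this uses that the cone generates the selfadjoint part, which is exactly what makes the level-one situation ``nearly tautological,'' as you say. Both arguments are sound; yours trades the paper's external citation and conjugation bookkeeping for a self-contained and slightly stronger-looking canonical identification.
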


            \begin{proof}    For real valued functions $f, g$ on a set $K$ we have $$\| f + ig \| = \sup \{ \sqrt{f(x)^2 + g(x)^2} : x \in K \} = \sup \{ \| s f + t g \|_{C(K,{\mathbb R})} : s^2 + t^2 \leq 1 \}.$$  Here $s, t$ are real.  It follows that the `function system complexification' of a 
 real function system $S$ is unique and  reasonable (we  assume that 
 the embedding of $S$ into the function system complexification is as real valued functions).  This complexification is the so-called Taylor complexification (see e.g.\ \cite{MMPS}). 

 Conversely, every complex function system  $V$ is a reasonable complexification of a real function system.  Indeed suppose that $V$ is a unital selfadjoint subspace of $C(K,\bC)$.
 Then $V_{\rm sa}$ is a real function system with the inherited cone (for example 
 it is clearly an ordered real space with archimedean order unit $1_V$).  
  Moreover $V$ is a reasonable complexification of $V_{\rm sa}$.

Clearly the above defines two functors between the categories. Notice that 
a unital positive map $T : V \to W$ in the complex category is completely positive, and selfadjoint, so $T(V_{\rm sa}) \subseteq W_{\rm sa}$. 
Thus it is clear that the category of complex function systems is isomorphic to the category of real function systems.

Any operator system $S$ whose complexification is $V$, is the set of fixed points for some  period 2 conjugate linear unital order 
isomorphism $u : V \to V$.
Note that $u$ is selfadjoint since it is positive.  Let $w = u_{|V_{\rm sa}}$, which is a period 2 order automorphism of $V_{\rm sa}$.   Then $u \circ w_c$ is a period 2 conjugate linear  unital order isomorphism $V \to V$ whose fixed points are exactly $V_{\rm sa}$.  That is, $u(w_c(v)) = \bar{v}$ for $v \in V$.  Thus up to the unital order isomorphism $w_c$,  the complexification of $S$ can be identified with the  complexification of $V_{\rm sa}$. \end{proof}

{\bf Remark.}  Of course the analogues for operator systems  of most of the assertions in the last result are (badly) false in general.  
In particular a complex operator system  $V$ need not be a reasonable complexification of  $V_{\rm sa}$.  For example, $M_2(\bC)$ is 
not a reasonable complexification of  $M_2(\bC)_{\rm sa}$ (see the discussion after Lemma \ref{reisAffine}).  
Lemma \ref{ccl} however is a partial result along these lines.
We  also always have a canonical one-to-one real continuous nc affine map ${\rm ncS}_{\bC}(V) \to {\rm ncS}_{\bR}(V_{\rm sa})$, taking 
$\varphi \mapsto \re \, \varphi$.
Since $\re$ is completely contractive this map is well defined.  It is one-to-one since Re $\varphi = 0$ implies that Re $(i \varphi) = 0$.   It is surjective if $V = {\rm OMAX}(V)$ for example. 

\medskip

We now consider the nc convex sets canonically associated with the function system  and its complexification.

        Let $K$ be a classical compact convex set in a real dual Banach space $E$.   Since the beginnings of the subject of matrix and nc convexity, authors have shown that in the complex case there is a smallest and largest matrix/nc convex set which agrees at the first level with $K$ (see e.g.\ \cite[Section 5]{EW}, or \cite[Section 1.2.3]{PSS} and references therein).  It seems to us that in general
         these sets depend on the particular embedding of $K$ into a LCTVS operator space.   In this paper 
        we will define, in both the real and complex case, Min$(K)$ to be the closed nc convex hull in 
        Max$(E)$ of $(K, \emptyset,  \emptyset, \cdots)$.   (For the definition of Min and Max of operator spaces
        and their properties see e.g.\ \cite{BLM,Sharma}). This is the smallest compact nc convex set containing $(K, \emptyset,  \emptyset, \cdots)$.  
We remark that our notation conflicts with that in \cite[Section 5]{KS}, who call this max$(K)$ perhaps 
because they want to regard it as the largest compact nc convex in their ordering.

         \begin{lemma}\label{ominw}  Let $K$ be a classical compact convex set as above. 
        At the first level ${\rm Min}(K)$ is simply $K$, at the $n$-th level it (that is, $({\rm Min}(K)_n$) is the weak* closure in 
        $M_n(E)$ of the ordinary convex hull ${\rm co}(C)$ 
        of the set $C$ of terms $a \otimes k$ for a (trace 1 positive selfadjoint) density matrix $a \in M_n(\bF)^+$ and $x \in K$. 
            \end{lemma}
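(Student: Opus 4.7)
The plan is to unpack the nc convex hull of $(K,\emptyset,\emptyset,\ldots)$ directly from the definition and then match the resulting set with the stated density-matrix description. For the level-$1$ claim, the constraint $\sum_i \alpha_i^{\mathsf T}\alpha_i = 1$ reduces to a scalar equation, so nc convex combinations of level-$1$ elements are ordinary convex combinations in $K$. Since $K$ is already compact and convex, $({\rm Min}(K))_1 = K$ with no closure required.

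For level $n$, because the generating set $(K,\emptyset,\emptyset,\ldots)$ is empty above level $1$, every nc convex combination landing at level $n$ must be built from level-$1$ elements and so takes the form
\[
\sum_i \alpha_i^{\mathsf T} k_i \alpha_i = \sum_i (\alpha_i^{\mathsf T}\alpha_i)\otimes k_i, \quad \alpha_i \in M_{1,n}(\bF),\ k_i \in K,\ \sum_i \alpha_i^{\mathsf T}\alpha_i = 1_n.
\]
Each $p_i := \alpha_i^{\mathsf T}\alpha_i$ is a rank-one positive matrix; by spectral refinement, one may equivalently allow arbitrary PSD $p_i$'s summing to $1_n$. Thus the algebraic (pre-closure) nc convex hull at level $n$ is the set of all finite sums $\sum_i p_i\otimes k_i$ with $p_i \in M_n(\bF)^+$, $\sum p_i = 1_n$, and $k_i \in K$.

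To match this with ${\rm co}(C)$, I would decompose each $p_i = {\rm tr}(p_i)\cdot a_i$, where $a_i = p_i/{\rm tr}(p_i)$ is a density matrix, and set $\mu_i = {\rm tr}(p_i)/n$, so that $\sum \mu_i = 1$. The identity $\sum_i p_i\otimes k_i = n\sum_i \mu_i(a_i\otimes k_i)$ exhibits each element as a convex combination of terms $a\otimes k$ from $C$ after the natural scaling; conversely, any $\sum_j \mu_j a_j\otimes k_j \in {\rm co}(C)$ is of the form $\sum_j (\mu_j a_j)\otimes k_j$ with positive weights and so lies in the algebraic nc convex hull. The weak*-closure step absorbs nc convex combinations with infinite index sets, invoking the Proposition immediately preceding Proposition~\ref{nf} on limits of compressions of bounded families, together with the fact that the graded set we obtain is already closed under compressions and direct sums.

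The main obstacle is the normalisation bookkeeping: the nc-convex-combination condition $\sum p_i = 1_n$ carries total trace $n$, while a scalar convex combination $\sum \mu_i = 1$ is unit-normalised. Threading this factor cleanly through the density-matrix identification, and verifying that the weak*-closure of ${\rm co}(C)$ really reproduces the full level-$n$ nc convex hull (rather than a proper subset with a smaller trace constraint), is where the argument demands the most care.
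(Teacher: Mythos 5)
Your level-$1$ argument, the rank-one/spectral identification of the algebraic nc convex hull at level $n$ with the finite sums $\sum_i p_i\otimes k_i$ ($p_i\in M_n(\bF)^+$, $\sum_i p_i=1_n$, $k_i\in K$), and the plan of showing that the weak* closures form a closed nc convex set which is then the smallest one agreeing with $K$ at level $1$, all run parallel to the paper's proof (which writes this same set as $\{\sum_j v_j^\tran k_j v_j : v_j\in M_{1,n},\ \sum_j v_j^\tran v_j=1_n\}$). The genuine gap is exactly the step you flag as the ``main obstacle'': the asserted identification of this set with ${\rm co}(C)$ for $C=\{a\otimes k:\ a\ \text{a trace-one density matrix},\ k\in K\}$. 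That identification is false and cannot be repaired by rescaling, because the defining constraint $\sum_i p_i=1_n$ is an operator equation, not a trace normalization. Your forward computation only places an nc convex combination in $n\cdot{\rm co}(C)$, which is strictly larger in general (it only remembers that the coefficient matrices have total trace $n$), and your converse claim---that $\sum_j\mu_j a_j\otimes k_j$ ``lies in the algebraic nc convex hull''---fails outright, since $\sum_j\mu_j a_j$ has trace $1$ and so cannot equal $1_n$ once $n\ge 2$. Concretely, take $E=\bR$ and $K=\{1\}$: the nc convex hull of $(K,\emptyset,\emptyset,\dots)$ has $n$-th level $\{1_n\}$, while ${\rm co}(C)$ is the set of all $n\times n$ density matrices; no scalar factor matches these two sets.

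What your argument actually establishes is the correct intended description: $({\rm Min}(K))_n$ is the weak* closure of $\{\sum_j a_j\otimes k_j:\ a_j\in M_n(\bF)^+,\ \sum_j a_j=1_n,\ k_j\in K\}$. The ``trace one'' phrasing in the statement is a normalization slip, and the paper's own proof works throughout with sums normalized by $\sum_j v_j^\tran v_j=1_n$, never with convex combinations of single tensors $a\otimes k$ having trace-one coefficient; so the right move is to correct the target set rather than try to thread the factor $n$ through the bookkeeping. Two smaller points you leave implicit but should supply, as the paper does: the weak* closures $W_n$ must be checked to be nc convex (compressions and direct sums of weak* convergent nets converge weak*), and compactness needs an argument---after arranging $K\subseteq{\rm Ball}(E)$, the combinations lie in the weak* compact set ${\rm Ball}(M_n({\rm Max}(E)))$---before minimality identifies $W$ with ${\rm Min}(K)$.
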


            \begin{proof} To see this first note that $${\rm co}(C) = \{ \sum_{j=1}^m \, v_j^* k_j v_j : m \in \bN, k_j \in K, v_j \in M_{1,n}  , \sum_j \, v_j^*  v_j = I_n \}.$$
Indeed if $x$ is in this weak* closure $W_n$ in $M_n(E)$, a limit of $x_t$, where $x_t \in {\rm co}(C)$, and $\beta \in M_{m,n}$ is an isometry, then $\beta^* x_t \beta \to \beta^* x \beta$ weak*.
Thus these weak* closures $(W_n)$  satisfy (3) in the definition of a nc convex set, and similarly it 
satisfies (2) there, if we also use the fact that $\sum_i \, \alpha_i x_i \alpha_i^*$ converges weak*.   Hence $(W_n)$ 
is a nc convex set.  It is closed, and hence compact. 
Indeed we may assume that $K \subseteq {\rm Ball}(E)$, and hence 
in $M_n(E)$ the nc convex combinations are in 
${\rm Ball}(M_n({\rm Max}(E)))$.  Since the latter ball is weak* compact, so is $W$. 
Clearly then this is the smallest closed nc convex set which agrees at the first level with $K$. \end{proof}

We remark that the weak* closure is unnecessary if $A(K)$ is finite dimensional and $n < \infty$. For the convex hull of a compact set in a finite dimensional LCTVS is compact, and the set $C$ above is easily seen to be compact in $M_n(E)$. 

If $K$ is the classical state space 
of an operator system and $n$ is a cardinal then the weak* closed convex hull $W_n$ of the set $C$ 
 defined in Lemma \ref{ominw}, may be called 
 the {\em separable} (i.e.\ {\em nonentangled}) matrix states of  $V$.
 The lemma asserts that these 
 nonentangled states (for all levels $n$, i.e.\ $W = (W_n)$) are the elements of the nc convex set Min$(K)$. 

We shall see next that 
this is also exactly  the nc/matrix state space of OMIN$(A(K))$. 

       \begin{lemma}\label{CEu} Let $\varphi : V \to B(H)$ be a completely positive selfadjoint map on a real operator system.  Let $a = \varphi(1)^{\frac{1}{2}}$.  
 Then 
there exists ucp  $\Psi : V \to B(H)$ such that $\varphi = a \Psi(\cdot) a$.  \end{lemma}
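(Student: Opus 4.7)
The plan is to run the standard ``Choi--Effros rescaling'' argument in the real setting, where the only tweak needed is to handle the case that $b := \varphi(1)^{1/2}$ is non-invertible. (I read $b$ in the statement as the $a$ defined just before it.) First I would invoke the real Arveson extension theorem to extend $\varphi$ to a completely positive selfadjoint map on a real $C^*$-algebra containing $V$, and then apply the real Stinespring dilation theorem (the extension to cp maps of the real GNS theorem cited in the paper as \cite[Theorem 3.3.4]{Li}), obtaining a real Hilbert space $K$, a unital $*$-representation $\pi$ on $K$, and $W \in B(H,K)$ with $\varphi(v) = W^* \pi(v) W$ for all $v \in V$. Then $W^* W = \varphi(1) = b^2$, so the polar part of $W$ is exactly $b$: we may write $W = u b$ in a real polar decomposition, with $u$ a partial isometry whose initial projection $P := u^* u$ is the projection onto $\overline{\mathrm{ran}(b)}$, so that $b(I - P) = (I - P)b = 0$.

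Next I would fix any real state $\omega$ on $V$ (such a state exists: extend the functional $t \cdot 1 \mapsto t$ on $\bR \cdot 1 \subseteq V_{\mathrm{sa}}$ to $V_{\mathrm{sa}}$ via the real Hahn--Banach theorem, then to $V$ by killing the anti-selfadjoint part), and define
\[
\Psi(v) \;:=\; u^* \pi(v) u \;+\; \omega(v)(I - P), \qquad v \in V.
\]
This is completely positive as a sum of cp maps --- the first summand because $v \mapsto \pi(v)$ is a $*$-representation and $u^*(\cdot) u$ is cp, the second because a real state is automatically cp and $I - P \geq 0$. Unitality is immediate: $\Psi(1) = u^* u + (I - P) = P + (I - P) = I$. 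Finally, since $b(I-P) = 0$,
\[
b \, \Psi(v) \, b \;=\; b u^* \pi(v) u b \;+\; \omega(v)\, b(I-P)b \;=\; W^* \pi(v) W \;=\; \varphi(v),
\]
as required.

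The main obstacle is really just the bookkeeping: making sure the real analogues of Arveson extension, Stinespring dilation, and polar decomposition are correctly quoted from the real operator algebra literature (e.g.\ \cite{Li, BR}). An alternative I would briefly mention is to complexify: apply the complex version of the lemma to the cp map $\varphi_c : V_c \to B(H)_c \cong B(H_c)$ (noting $b_c = b$ is already in $B(H)$), and argue that the construction descends to a ucp map $\Psi : V \to B(H)$; this requires choosing the auxiliary ingredients in the complex proof in a selfadjoint way, and once that is arranged the selfadjointness of $\varphi$ guarantees that $\Psi_c$ restricts to $V \to B(H)$. Either route isolates the same essential point, namely that the ``defect'' $I - P$ must be absorbed into an auxiliary unital cp summand because $b$ need not be invertible.
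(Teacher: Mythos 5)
Your argument is correct, but it takes a genuinely different route from the paper. The paper simply complexifies: $\varphi_c : V_c \to B(H)_c$ is completely positive, the complex Choi--Effros result \cite[Lemma 2.2]{CEinj} produces a ucp $\Psi$ with $\varphi_c = b\Psi(\cdot)b$, and one then inspects that proof to see that $\Psi(V)\subseteq B(H)$, so the restriction does the job (this is essentially the ``alternative'' you sketch in your last paragraph). What you propose instead is a direct real proof of the Choi--Effros rescaling lemma itself: extend $\varphi$ by the real Arveson theorem, dilate by real Stinespring, take the real polar decomposition $W = ub$ with $b = \varphi(1)^{1/2} = |W|$, and absorb the defect $I-P$ (where $P = u^*u$ projects onto $\overline{\mathrm{ran}\, b}$) into the unital cp summand $\omega(\cdot)(I-P)$ built from a real state $\omega$; the verification that $\Psi = u^*\pi(\cdot)u + \omega(\cdot)(I-P)$ is ucp, selfadjoint, and satisfies $b\Psi(\cdot)b = W^*\pi(\cdot)W = \varphi$ is exactly right, since $b(I-P) = (I-P)b = 0$. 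Two small remarks: the real-case ingredients you quote (Arveson extension for completely positive \emph{selfadjoint} maps, Stinespring dilation, real states being ucp) do hold and are available in the real literature (\cite{BR,Li,ROnr}), but they are precisely the kind of machinery the paper's complexification trick is designed to avoid redoing, so if you use the direct route you should cite or verify these carefully in the real category (selfadjointness does not come for free from positivity over $\bR$); and the state $\omega$ can be obtained more cheaply as a vector state of any concrete representation of $V$, with no Hahn--Banach extension needed. In short, your proof is self-contained and makes the support/defect issue explicit, while the paper's proof is shorter and consistent with its overall strategy of transferring complex results through the complexification.
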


            \begin{proof} Note that
          $\varphi : V_c \to B(H)_c$
            is completely positive, so by e.g.\ \cite[Lemma 2.2]{CEinj}  there exists ucp  $\Psi : V_c  \to B(H)_c$
           with $\varphi_c = a \Psi(\cdot) a$.  Inspecting the proof of the last cited result we 
see that $\Psi(V) \subseteq B(H)$ and $\varphi = a \Psi(\cdot) a$. \end{proof}
        
        \begin{prop}\label{omin}  Let $K$ be a classical compact convex set. Then $A({\rm Min}(K)) = {\rm OMIN}(A(K))$.
        That is, ${\rm Min}(K)$ is the nc convex set corresponding to 
${\rm OMIN}(A(K))$ via the functorial correspondence between compact nc convex sets and operator systems. In particular, ${\rm Min}(K)$ consists of the ucp maps 
${\rm OMIN}(A(K)) \to M_n$, for all $n \leq \kappa$. 
            \end{prop}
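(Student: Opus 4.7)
The strategy is to establish the identification $\mathrm{Min}(K)\cong \mathrm{ncS}(\mathrm{OMIN}(A(K)))$ as compact nc convex sets; by the categorical duality (Theorems~\ref{AffineCateg} and \ref{ncSCateg}) this is equivalent to the operator system identification $A(\mathrm{Min}(K))=\mathrm{OMIN}(A(K))$, and the ``In particular'' assertion is immediate. Concretely, I will realize $\mathrm{OMIN}(A(K))$ as the unital real subsystem of $C_{\bR}(K)$ with matrix order inherited from $C(K)$: namely $[g_{ij}]\in M_m(\mathrm{OMIN}(A(K)))^+$ iff $[g_{ij}(k)]\in M_m^+$ for every $k\in K$. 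Taking $E=A(K)^*$, the standard identification $M_n(E)\cong \mathrm{CB}(A(K),M_n)$ then allows me to compare elements of $M_n(E)$ directly with linear maps $A(K)\to M_n$.

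\medskip
For the inclusion $\mathrm{Min}(K)_n\subseteq \mathrm{ncS}(\mathrm{OMIN}(A(K)))_n$, I use Lemma~\ref{ominw}: every $x\in\mathrm{Min}(K)_n$ is a weak*-limit of finite nc convex combinations $x=\sum_l v_l^{\tran}k_l v_l$ with $v_l\in M_{1,n}$, $k_l\in K$, and $\sum_l v_l^{\tran}v_l=I_n$. Such an $x$ corresponds under the identification to the map $\psi_x:g\mapsto \sum_l g(k_l)\,v_l^{\tran} v_l\in M_n$. This map is unital (taking $g=1$ gives $I_n$), and it is ucp on $\mathrm{OMIN}(A(K))$ because for $[g_{ij}]\in M_m(\mathrm{OMIN}(A(K)))^+$ the amplification yields
\[
[\psi_x(g_{ij})] \;=\; \sum_l [g_{ij}(k_l)]\otimes (v_l^{\tran}v_l)\;\in\; M_m(M_n)\cong M_m\otimes M_n,
\]
a sum of tensor products of positive matrices, hence positive. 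Since the ucp maps form a weak*-closed subset of $\mathrm{CB}(A(K),M_n)$, this containment passes to the weak*-closure $\mathrm{Min}(K)_n$.

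\medskip
For the reverse inclusion, I argue that every ucp map $\psi:\mathrm{OMIN}(A(K))\to M_n$ arises this way. Applying the real Arveson extension theorem to $\mathrm{OMIN}(A(K))\hookrightarrow C(K)$ yields a ucp extension $\tilde\psi:C(K)\to M_n$. By the real Riesz representation theorem for matrix-valued measures, $\tilde\psi(f)=\int_K f\,d\mu$ for a positive $M_n$-valued Radon measure $\mu$ with $\mu(K)=I_n$. Such measures are weak*-approximated by atomic ones $\sum_j A_j\delta_{k_j}$ with $A_j\in M_n^+$ and $\sum_j A_j=I_n$; spectrally decomposing each $A_j=\sum_s v_{j,s}^{\tran}v_{j,s}$ with $v_{j,s}\in M_{1,n}$, the atomic measure represents the element $\sum_{j,s} v_{j,s}^{\tran}k_j v_{j,s}$, precisely of the form in Lemma~\ref{ominw}. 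Passing to the weak* limit, the element of $M_n(E)$ corresponding to $\psi$ lies in $\mathrm{Min}(K)_n$. Combining both inclusions gives $\mathrm{Min}(K)\cong \mathrm{ncS}(\mathrm{OMIN}(A(K)))$, and Theorem~\ref{ncSCateg} then yields $A(\mathrm{Min}(K))\cong \mathrm{OMIN}(A(K))$ as operator systems.

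\medskip
The main technical obstacle is the third paragraph: establishing real Arveson extension and real Riesz representation for matrix-valued measures (which specialize to $M_n(\bR)$-valued measures on the compact $K$). Both facts are standard but need to be cited or verified explicitly in the real setting; the first is immediate from the real Stinespring/Arveson theory in \cite{BR}, while the second reduces to the scalar real Riesz theorem applied entrywise together with positivity of $[\mu_{ij}]$ as an $M_n$-valued set function. The remaining ingredients — the tensor-product positivity in Paragraph~2 and the weak* density of atomic measures — are routine.
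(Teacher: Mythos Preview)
Your argument is correct, but the route differs from the paper's. The paper proves $A(\mathrm{Min}(K))=\mathrm{OMIN}(A(K))$ directly at the level of operator systems: it shows that restriction $A(\mathrm{Min}(K))\to A(K)$ is a bijection (each $f\in A(K)$ extends uniquely to $\hat f\in A(\mathrm{Min}(K))$), computes $\|\hat f_n\|=\|f\|$ on the dense combinations $\sum_j v_j^{\tran}k_j v_j$ to get a unital isometry, then invokes the defining property of $\mathrm{OMIN}$ (any unital isometry into it is automatically ucp) for one direction and a direct computation $[\widehat{f_{ij}}(x)]=V^{\tran}\mathrm{diag}(f(k_1),\ldots,f(k_m))V\ge 0$ for the other. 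No Arveson extension, no measures.

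You instead prove the dual identity $\mathrm{Min}(K)=\mathrm{ncS}(\mathrm{OMIN}(A(K)))$ and appeal to the categorical duality. Your forward inclusion is essentially the same positivity calculation the paper does. Your reverse inclusion replaces the paper's one-line use of the $\mathrm{OMIN}$ universal property by Arveson extension to $C_{\bR}(K)$, the real matrix-valued Riesz theorem, and weak*-approximation by atomic measures. This is heavier machinery than needed here, but it is sound, and it has the side benefit of essentially yielding Corollary~\ref{pttth} (the description of cp maps out of $\mathrm{OMIN}$ as limits of separable combinations) along the way, whereas the paper deduces that corollary afterwards from the proposition together with Lemma~\ref{CEu}. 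The paper's approach is more self-contained and makes transparent why the $\mathrm{OMIN}$ structure is exactly what forces the minimal quantization; yours gives a more explicit picture of the nc state space via measures.
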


            \begin{proof}  We just prove the real case. To prove this note that if $f \in A({\rm Min}(K))$ then clearly 
$f \in A(K)$.  Conversely, suppose that $f \in A(K)$.  We may assume that $K$ is a subset of a real dual Banach space such that $f$ extends to a linear continuous $\varphi \in E^*$.  Indeed we can take  $E = {\rm Max}(A(K)^*)$.   The restriction of $\varphi_n$  to $K_n$ defines the desired function from Min$(K)_n$ to $M_n$ extending $f$.  Call this function $f_n$, then 
$f_n(R \otimes k) = R f(k)$ for density matrix $R \in M_n^+$ and $k \in K$.  This defines (the unique) nc affine function $\hat{f}$ in $A({\rm Min}(K))$, which at first level is  $f : K \to \bF$. 

Note that $\| \hat{f}_n \| = \| f \|$. 
To see this note that for $x = \sum_j \, v_j^* k_j v_j$
with $k_j \in K, v_j \in M_{1,n}$ and $\sum_j \, v_j^*  v_j = I_n$ we have 
$$\| \hat{f}_n(x) \| = \| \sum_j \, v_j^* f(k_j) v_j \|
\leq \max_j \, |f(k_j)| \leq \| f \|.$$ By density and continuity we see that  $\| \hat{f}_n(x) \| \leq \| f \|$ for all
$x \in ({\rm Min}(K))_n$. 

This proves that the canonical map $A({\rm Min}(K)) \to {\rm OMIN}(A(K))$ is a unital isometry, and hence by a basic property of OMIN it is a  completely positive complete contraction.    Conversely, suppose that 
$f = [f_{ij}] \in M_n({\rm OMIN}(A(K)))^+$, so that
$f(k) \geq 0$ for all $k \in K$.  Claim: $[\widehat{f_{ij}}]
\in M_n(A({\rm Min}(K)))^+$.  Indeed for $x$ as in the last paragraph, we have for an appropriate scalar matrix $V$ that 
$$[\widehat{f_{ij}}(x)] = [\sum_j \, v_j^* f_{ij}(k_j) v_j] = V^* {\rm diag} ( f(k_1) , \cdots , f(k_n) ) V \geq 0 .$$
Thus $[\widehat{f_{ij}}] \geq 0$, by density and continuity. This proves the Claim, so that the canonical map $A({\rm Min}(K)) \to {\rm OMIN}(A(K))$ is a unital complete order isomorphism, hence also a complete isometry. 
\end{proof} 

{\bf Remark.}  If $K$ is a classical compact convex set {\em regularly embedded} in a dual operator space $E$ (see e.g.\ II.2 in \cite{Alfsen}
 or the last paper mentioned in the Acknowledgements below), then one may define a
variant  of Min$(K)$ as the closed nc convex hull of $K$ in $E$.   Since it is regularly embedded any $f \in A(K)$ extends to weak* continuous  $\varphi \in E^*$.
 The last proof then works to show that this nc convex set is topologically affine nc isomorphic to 
the nc convex set corresponding to 
${\rm OMIN}(A(K))$.  Hence also to Min$(K)$ as defined earlier (above Lemma \ref{ominw}). 

\bigskip

Most of the last result in the complex case and for $n < \infty$ can also be deduced from an assertion in 
\cite[Theorem 4.8 and Remark 4.5]{PTT}, and is equivalent to that assertion. 
Indeed we use the above to give a generalization of  this result:

\begin{cor} \label{pttth} Let $V$ be a 
real or complex function system, $H$ a real or complex Hilbert space, and $n < \infty$.  Any element of $M_n({\rm OMIN}(V)^d)^+$, or any completely positive selfadjoint
map ${\rm OMIN}(V) \to B(H)$, is a point weak* limit of a uniformly bounded 
net of maps of the form 
$\sum_j \, v_j^* \varphi_j v_j = \sum_j \, (v_j^*  v_j) \otimes \varphi_j$, for (scalar valued) states $\varphi_j$ on $V$ and 
row vectors $v_i$ with real or complex entries and $\sum_j \, v_j^* v_j$ strongly convergent.
\end{cor}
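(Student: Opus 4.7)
The plan is to reduce both claims to Proposition \ref{omin} and Lemma \ref{ominw}, handling first the case where the target is $M_n$ and then extending to general $B(H)$ by compression. The two assertions are essentially the same: an element of $M_n({\rm OMIN}(V)^d)^+$ corresponds to a completely positive selfadjoint map ${\rm OMIN}(V) \to M_n$.

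I would first treat the case $H = \ell^2_n$. Let $\varphi : {\rm OMIN}(V) \to M_n$ be completely positive and selfadjoint, and set $b = \varphi(1)^{1/2} \in M_n$. By Lemma \ref{CEu} there exists a ucp map $\Psi : {\rm OMIN}(V) \to M_n$ with $\varphi = b\,\Psi(\cdot)\, b$. By Proposition \ref{omin}, $\Psi$ lies in ${\rm Min}(K)_n$ where $K = S(V)$, and by Lemma \ref{ominw} this means $\Psi$ is the point-weak* limit of a net
\[ \Psi_t = \sum_{j=1}^{m_t} v_{j,t}^*\, k_{j,t}(\cdot)\, v_{j,t}, \]
with $v_{j,t} \in M_{1,n}$, scalar states $k_{j,t}$ of $V$, and $\sum_j v_{j,t}^* v_{j,t} = I_n$. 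Setting $w_{j,t} = v_{j,t} b$, and using that left and right multiplication by fixed operators is point-weak* continuous, $b\,\Psi_t(\cdot)\, b = \sum_j w_{j,t}^*\, k_{j,t}(\cdot)\, w_{j,t}$ converges point-weak* to $\varphi$. Here $\sum_j w_{j,t}^* w_{j,t} = b^2 = \varphi(1)$, so the approximants are uniformly bounded and the defining finite sums are trivially strongly convergent.

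For a general Hilbert space $H$ and completely positive selfadjoint $\varphi : {\rm OMIN}(V) \to B(H)$, I would then approximate $\varphi$ by its finite-dimensional compressions. For each finite-dimensional subspace $F \subseteq H$ with orthogonal projection $P_F$, the map $\varphi_F := P_F\, \varphi(\cdot)\, P_F$ is completely positive selfadjoint into $B(F) \cong M_{\dim F}$; viewing its approximants as maps into $B(H)$ via the inclusion $F \hookrightarrow H$ (so the row vectors take values in $F \subseteq H$), the previous step writes $\varphi_F$ as a point-weak* limit of maps of the required form. As $F$ runs through the finite-dimensional subspaces of $H$, $\varphi_F(f) \to \varphi(f)$ in the weak operator topology for each $f \in V$, which on the weak*-compact ball of radius $\|\varphi\|$ in $B(H)$ coincides with the weak* topology. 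A standard iterated-limit argument along the product directed set then produces a single uniformly bounded net of the required form converging point-weak* to $\varphi$.

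The main obstacle is the last iterated-limit step, which requires verifying that the combined net is uniformly bounded and that point-weak* convergence survives the double limit. Both are straightforward here: every approximant $\Phi = \sum_j w_j^* (\cdot)\, w_j$ satisfies $\|\Phi\| = \|\Phi(1)\| = \|\sum_j w_j^* w_j\| \le \|\varphi\|$, so everything stays in the weak*-compact ball of radius $\|\varphi\|$ in $B(H)$, on which iterated point-weak* limits reduce to single limits by a routine subnet argument.
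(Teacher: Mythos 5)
Your argument is correct, and its first half is exactly the paper's proof: factor $\varphi = b\,\Psi(\cdot)\,b$ with $\Psi$ ucp via Lemma \ref{CEu}, identify $\Psi$ with a point of ${\rm Min}(K)_n$ via Proposition \ref{omin}, and push the approximating convex combinations from Lemma \ref{ominw} through multiplication by $b$. Where you diverge is the infinite-dimensional case: the paper simply applies Lemma \ref{CEu} and Lemma \ref{ominw} directly with $B(H)\cong M_\kappa$ for a possibly infinite cardinal $\kappa$ (the nc framework allows infinite levels, which is why the statement speaks of $\sum_j v_j^*v_j$ only strongly convergent), and gets in one step approximants $\sum_j b v_j^*\varphi_j(\cdot)v_j b$ with $\sum_j bv_j^*v_jb=\varphi(1)$. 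You instead use only the finite level, compress to $\varphi_F=P_F\varphi(\cdot)P_F$, and recombine via the facts that WOT and weak* agree on bounded sets and that the point-weak* closure of a closure is the closure (your ``iterated-limit'' step, which is really just transitivity of closure for nets in a topological space, and is fine since all approximants are bounded by $\|\varphi(1)\|$). The trade-off: the paper's route is shorter but leans on the validity of Lemma \ref{ominw} at infinite cardinals, whereas yours avoids that subtlety entirely at the cost of the compression/closure step, and produces finite-sum approximants with $\sum_j w_j^*w_j=P_F\varphi(1)P_F$ rather than $\varphi(1)$ — still within what the corollary asserts. Both are valid; yours is a legitimate, slightly more elementary alternative.
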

            
            \begin{proof}  Let $\varphi : V \to B(H)$ be completely positive and selfadjoint.  Let $a = \varphi(1)^{\frac{1}{2}}$. By Lemma \ref{CEu} 
there is a ucp $\Psi : V \to B(H)$ such that  $\varphi = a \Psi(\cdot) a$.  By Lemma \ref{ominw} and its proof we have that $\Psi$ is a point weak* limit of maps of the form
$\sum_i \, v_i^* \varphi_j v_j$ for states $\varphi_j$ and with $\sum_i \, v_i^*  v_j = 1$. 
Hence $\varphi$ is a point weak* limit of maps of the form
$\sum_j \, a v_j^* \, \varphi_j(\cdot) \,  v_j a$.  Note that $\| \sum_j \, a v_j^*  v_j a
\| = \| \varphi(1) \|$, so the net is uniformly bounded by $\| \varphi(1) \|$.
\end{proof} 

              \begin{lemma} \label{omin2} Let $K$ be a classical compact convex set (in a real dual Banach space). Then 
  ${\rm Min}_{\bR}(K)_c = {\rm Min}_{\bC}(K)$ as nc convex sets, with both equaling $K$ at the first level.
    \end{lemma}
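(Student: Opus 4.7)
The plan is to work directly with the explicit description of $\mathrm{Min}(K)$ at level $n$ provided by Lemma~\ref{ominw}, and verify the equality by establishing both inclusions $(\mathrm{Min}_{\bR}(K)_c)_n \subseteq \mathrm{Min}_{\bC}(K)_n$ and $\mathrm{Min}_{\bC}(K)_n \subseteq (\mathrm{Min}_{\bR}(K)_c)_n$ through manipulations with the embedding $c$ and the complex isometry $u_n = \tfrac{1}{\sqrt{2}}\bigl[\,I_n\; -iI_n\,\bigr]^\tran$. By Lemma~\ref{ominw}, $\mathrm{Min}_{\bR}(K)_{2n}$ is the weak$^*$-closure of real convex combinations $\sum_j V_j^\tran k_j V_j$ with $V_j \in M_{1,2n}(\bR)$, $k_j \in K$, and $\sum_j V_j^\tran V_j = I_{2n}$, while $\mathrm{Min}_{\bC}(K)_n$ is the weak$^*$-closure of combinations $\sum_j W_j^* k_j W_j$ with $W_j \in M_{1,n}(\bC)$ and $\sum_j W_j^* W_j = I_n$.

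For the first inclusion I would use the identity $x+iy = u_n^*\, c(x,y)\, u_n$ already exploited in the proof of Theorem~\ref{ncrealaffinecoml}. If $x+iy \in (\mathrm{Min}_{\bR}(K)_c)_n$, then $c(x,y) \in \mathrm{Min}_{\bR}(K)_{2n}$ is a weak$^*$-limit of sums $\sum_j V_j^\tran k_j V_j$ as above; compressing by $u_n$ (a weak$^*$-continuous operation) and setting $W_j := V_j u_n \in M_{1,n}(\bC)$ exhibits $x+iy$ as a weak$^*$-limit of $\sum_j W_j^* k_j W_j$. Since $u_n$ is an isometry, the correct normalization $\sum_j W_j^* W_j = u_n^*\bigl(\sum_j V_j^\tran V_j\bigr) u_n = u_n^* I_{2n} u_n = I_n$ holds automatically, placing $x+iy$ in $\mathrm{Min}_{\bC}(K)_n$.

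For the reverse inclusion I would apply the weak$^*$-continuous map $c$ to a combination $\sum_j W_j^* k_j W_j$. Since $c$ is multiplicative, sends $*$ to $\tran$, and satisfies $c(k_j) = k_j \oplus k_j$ for $k_j \in K \subseteq E$, we obtain
\[
c\Bigl(\sum_j W_j^* k_j W_j\Bigr) \;=\; \sum_j c(W_j)^\tran\,(k_j\oplus k_j)\,c(W_j).
\]
Writing $k_j \oplus k_j = \eta_1^\tran k_j \eta_1 + \eta_2^\tran k_j \eta_2$ with $\eta_1=(1,0)$, $\eta_2=(0,1)$ the standard rows in $M_{1,2}(\bR)$, each summand splits into two terms of the form $(\eta_i c(W_j))^\tran k_j (\eta_i c(W_j))$ with real row vectors $\eta_i c(W_j) \in M_{1,2n}(\bR)$. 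The resulting family is correctly normalized since $\sum_{i,j}(\eta_i c(W_j))^\tran(\eta_i c(W_j)) = \sum_j c(W_j)^\tran c(W_j) = c\bigl(\sum_j W_j^* W_j\bigr) = c(I_n) = I_{2n}$. Passing to the weak$^*$-limit places $c(x+iy)$ in $\mathrm{Min}_{\bR}(K)_{2n}$, so $x+iy \in (\mathrm{Min}_{\bR}(K)_c)_n$. At level $1$, $\mathrm{Min}_{\bC}(K)_1 = K$ holds directly from the definition, hence $(\mathrm{Min}_{\bR}(K)_c)_1 \subseteq K$ by the first inclusion, while $K \subseteq (\mathrm{Min}_{\bR}(K)_c)_1$ follows from $c(k,0) = k \oplus k \in \mathrm{Min}_{\bR}(K)_2$. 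The main obstacle is simply the bookkeeping in the reverse inclusion — ensuring $c$ intertwines the right operations and that the splitting produces genuinely normalized real row vectors — after which the weak$^*$-continuity arguments are routine.
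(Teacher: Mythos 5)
Your argument is correct, but it follows a genuinely different route from the paper. The paper proves the lemma on the operator-system side: it complexifies the identity $A_{\bR}({\rm Min}_{\bR}(K))={\rm OMIN}_{\bR}(A(K))$ from Proposition \ref{omin}, invokes \cite[Proposition 9.18]{BR} to get ${\rm OMIN}_{\bR}(A(K))_c={\rm OMIN}_{\bC}(A_{\bC}(K))$ (legitimate since $A(K)$ here is a function system with trivial involution), applies the complex case of Proposition \ref{omin}, and then transfers back through the categorical duality between compact nc convex sets and operator systems. You instead verify the equality of the two sets level by level, using the explicit description of ${\rm Min}(K)_n$ from Lemma \ref{ominw} as the weak$^*$ closure of nc convex combinations $\sum_j v_j^\tran k_j v_j$, together with the identity $x+iy=u_n^*c(x,y)u_n$, multiplicativity of $c$ and $c(A^*)=c(A)^\tran$ (all established in the proof of Theorem \ref{ncrealaffinecoml}), and weak$^*$-continuity of $c$ and of compressions by scalar matrices (facts the paper itself uses). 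Your bookkeeping in both inclusions — $W_j:=V_ju_n$ with $\sum_j W_j^*W_j=I_n$ one way, and the splitting $k\oplus k=\eta_1^\tran k\eta_1+\eta_2^\tran k\eta_2$ with rows $\eta_i c(W_j)$ summing correctly to $I_{2n}$ the other way — checks out. What the paper's route buys is economy and the simultaneous identification of the corresponding operator systems (it rides on the already-proved commutation of OMIN with complexification); what your route buys is a more elementary, self-contained proof that yields the literal equality of subsets of $\bigsqcup_n M_n(E_c)$ without appealing to \cite[Proposition 9.18]{BR} or the duality theorem. Two small points you are implicitly using and could flag: the complex-case version of Lemma \ref{ominw} applied to $K\subseteq E\subseteq E_c$ (fine, since that lemma is stated over $\bF$), and, at infinite levels, either the boundedness of all elements involved or a reduction to finite $n$ via Proposition \ref{nf} so that the ambient matrix spaces and weak$^*$ topologies cause no trouble.
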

            
            \begin{proof}       Complexifying the relation $A_{\bR}({\rm Min}_{\bR}(K)) =   {\rm OMIN}_{\bR}(A(K)),$  we have
$$A_{\bC}(({\rm Min}_{\bR}(K))_c) =
{\rm OMIN}_{\bR}(A(K))_c = 
{\rm OMIN}_{\bC}(A_{\bC}(K)) .$$
In the last equality we used \cite[Proposition 9.19]{BR} taking $V = {\rm OMIN}(A_{\bR}(K))$, so that $V_c = A_{\bC}(K)$
(note that $A_{\bR}(K)$ has the trivial involution at the first level). 
However ${\rm OMIN}_{\bC}(A_{\bC}(K)) = A_{\bC}({\rm Min}_{\bC}(K))$ by the complex case of Proposition  \ref{omin}. 
It follows by the functorial correspondence between compact nc convex sets and operator systems that  ${\rm Min}_{\bR}(K)_c = {\rm Min}_{\bC}(K)$. 
        \end{proof}

An OMIN (resp.\ OMAX) operator system is just a (real or complex) function system with  OMIN (resp.\ OMAX) operator system structure, or equivalently equals OMIN$(A(K))$ 
(resp.\ OMAX$(A(K))$) for a classical
closed convex  set $K$.  As discovered in \cite[Section 9]{BR} (see the lines preceding our
Subsection  \ref{SecBasicDefinitions} above), 
a real  AOU $*$-space
 (resp.\ real operator system $V$) with nontrivial involution cannot be (resp.\ cannot have an associated) OMIN (resp.\ OMAX) operator system.   
  We remind the reader that an OMIN (resp.\ OMAX) operator system has the property that unital positive maps into (resp.\ out of) it, are ucp.

We  define the maximal quantization  ${\rm Max}(K)$  by the functorial correspondence between compact nc convex sets and operator systems, via
$A({\rm Max}(K)) = {\rm OMAX}(A(K))$.  We have that Max$(K)$ is the nc set $(K^n)$ with $K^n$  in $M_n(E^*)$ the set of unital positive (selfadjoint) linear 
 maps $\varphi : A_{\bF}(K) \to M_n$.   (Cf.\  e.g.\ \cite[End of Section 5]{EW}, where the maximal  one is defined by duality or by the bipolar theorem as the prepolar of $({\rm Min}(K))^\circ$.  For an appropriate choice of $E$ this 
 will coincide with ours because both are the largest compact nc convex set agreeing with $K$ at `level 1'.) This  is nc convex and nc compact, indeed by  a basic property of OMAX, 
  $(K^n)$ is the nc matrix state space of ${\rm OMAX}(A(K))$. That is,   

\begin{lemma} \label{omax} Let $K$ be a classical compact convex set (in a dual Banach space). Then ${\rm Max}(K) =  {\rm ncS(OMAX}(A(K)))$.
    \end{lemma}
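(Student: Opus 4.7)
The proof I would give is essentially immediate from the categorical duality, once the pieces are lined up correctly; the content of the lemma is the consistency of two independently reasonable definitions.

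First, I would recall that by the very definition given just before the statement, ${\rm Max}(K)$ is the compact nc convex set assigned to the operator system ${\rm OMAX}(A(K))$ under the functorial correspondence of Theorems \ref{AffineCateg} and \ref{ncSCateg}. In other words, $A({\rm Max}(K)) = {\rm OMAX}(A(K))$. Applying Theorem \ref{AffineCateg} to the compact nc convex set $L := {\rm Max}(K)$ yields
\[
    {\rm Max}(K) \cong {\rm ncS}(A({\rm Max}(K))) = {\rm ncS}({\rm OMAX}(A(K))),
\]
which is the desired identification.

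Next, to see that this agrees with the concrete description of ${\rm Max}(K)$ already flagged in the paragraph preceding the lemma, namely that the $n$-th level is the set of unital positive (selfadjoint) maps $\varphi : A(K) \to M_n$, I would invoke the universal property of OMAX: a map into $M_n$ is completely positive on ${\rm OMAX}(A(K))$ if and only if it is positive on the underlying function system $A(K)$. Thus
\[
    {\rm ncS}({\rm OMAX}(A(K)))_n = \mathrm{UCP}({\rm OMAX}(A(K)), M_n) = \{ \varphi : A(K) \to M_n \;:\; \varphi \text{ unital positive selfadjoint} \},
\]
which is the alternative description. At level 1 this specializes to the scalar states of $A(K)$, and Kadison duality identifies this with $K$, so the first level is indeed unchanged; this recovers the extremal (maximal) property mentioned in the paragraph above.

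The statement is sufficiently close to a tautology that I do not anticipate a real obstacle; the only subtle point is making sure that the two competing conventions for ${\rm Max}(K)$ (the operator-system definition via OMAX, and the Effros--Winkler bipolar/prepolar description) are really the same nc convex set. The paragraph above defining ${\rm Max}(K)$ already notes they coincide by maximality at level $1$, so appealing to that remark together with the categorical duality is all that is needed. Everything else follows from the universal property of OMAX and Theorem \ref{AffineCateg}.
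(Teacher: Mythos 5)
Your proposal is correct and follows essentially the same route as the paper, which gives no separate proof: the lemma is justified by the defining relation $A({\rm Max}(K)) = {\rm OMAX}(A(K))$ together with the duality of Theorem \ref{AffineCateg}, and the identification of ${\rm ncS}({\rm OMAX}(A(K)))_n$ with the unital positive selfadjoint maps $A(K) \to M_n$ via the basic property that unital positive maps out of an OMAX system are ucp. Your treatment of the level-one/maximality remark matches the paper's parenthetical comparison with the Effros--Winkler description, so nothing is missing.
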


\begin{prop} \label{omax2} Let $K$ be a classical compact convex set (in a real dual Banach space). Then 
  ${\rm Max}_{\bR}(K)_c = {\rm Max}_{\bC}(K)$ as nc convex sets, with both equaling $K$ at the first level.
    \end{prop}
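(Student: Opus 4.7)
The plan is to follow the template of Lemma \ref{omin2} \emph{mutatis mutandis}, replacing OMIN by OMAX throughout. Starting from the defining relation $A_{\bR}({\rm Max}_{\bR}(K)) = {\rm OMAX}_{\bR}(A(K))$, I would complexify both sides. By Theorem \ref{AffineCommC}, the left side becomes $A_{\bC}(({\rm Max}_{\bR}(K))_c)$. For the right side, I would invoke the OMAX analogue of \cite[Proposition 9.18]{BR}, namely that ${\rm OMAX}_{\bR}(V)_c = {\rm OMAX}_{\bC}(V_c)$ for a real function system $V$. Applying this to $V = A_{\bR}(K)$, and using that $A_{\bR}(K)_c = A_{\bC}(K)$ (valid because $A_{\bR}(K)$ has trivial involution, being a real function system at level 1, as also used in the proof of Lemma \ref{omin2}), yields ${\rm OMAX}_{\bR}(A(K))_c = {\rm OMAX}_{\bC}(A_{\bC}(K))$. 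By the complex case of Lemma \ref{omax}, this last operator system equals $A_{\bC}({\rm Max}_{\bC}(K))$. Combining the chain gives $A_{\bC}(({\rm Max}_{\bR}(K))_c) = A_{\bC}({\rm Max}_{\bC}(K))$ as complex operator systems, and by the categorical duality established in Section \ref{SecCatDuality} (Theorems \ref{AffineCateg} and \ref{ncSCateg}) we conclude that ${\rm Max}_{\bR}(K)_c = {\rm Max}_{\bC}(K)$ as complex compact nc convex sets.

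The first-level assertion then comes essentially for free. Indeed $({\rm Max}_{\bC}(K))_1 = K$ directly from the definition of Max as the largest compact nc convex set whose first level is $K$, so the identification above forces $({\rm Max}_{\bR}(K)_c)_1 = K$ as well. If one wishes to verify this intrinsically, one can alternatively note that by categorical duality the first level of ${\rm Max}_{\bR}(K)_c$ equals the scalar state space of $A_{\bC}(({\rm Max}_{\bR}(K))_c) = {\rm OMAX}_{\bC}(A_{\bC}(K))$, and a defining property of OMAX is that unital positive maps out of it are automatically ucp, so these scalar states coincide with unital positive functionals on $A_{\bC}(K)$, which by Kadison duality gives back $K$.

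The main obstacle I anticipate is justifying the OMAX complexification identity ${\rm OMAX}_{\bR}(V)_c = {\rm OMAX}_{\bC}(V_c)$ for real function systems $V$. If this is not already recorded in \cite[Section 9]{BR}, I would prove it directly, either by checking that complexification carries the abstract OMAX matrix order (the maximal matrix order extending a given AOU space structure at level 1) onto the complex OMAX structure, or by using the universal mapping property: both sides represent the functor $W \mapsto \{\text{unital positive maps } V_c \to W\}$ on complex operator systems $W$, with ``unital positive'' automatically upgrading to ucp because the domain is an OMAX. An alternative path, avoiding this identity entirely, would be to characterize both ${\rm Max}_{\bR}(K)_c$ and ${\rm Max}_{\bC}(K)$ as solving the same universal problem, namely being the largest complex compact nc convex set whose first level is $K$; here Theorem \ref{unco} (uniqueness of reasonable complexification) together with Proposition \ref{omincat} (equivalence of real and complex function systems) would allow one to transfer the maximality property across complexification.
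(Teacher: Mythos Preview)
Your proposal is correct and follows essentially the same argument as the paper's own proof, which likewise complexifies the defining relation $A_{\bR}({\rm Max}_{\bR}(K)) = {\rm OMAX}_{\bR}(A(K))$, invokes \cite[Proposition 9.18]{BR} for the OMAX complexification identity, and then appeals to the categorical duality. The paper confirms that the needed OMAX identity is indeed covered by \cite[Proposition 9.18]{BR}, so your anticipated obstacle does not arise.
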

            
            \begin{proof}     This is almost identical to the OMIN case.
            Complexifying the relation $A_{\bR}({\rm Max}_{\bR}(K)) =   {\rm OMAX}_{\bR}(A(K)),$  we have
$$A_{\bC}(({\rm Max}_{\bR}(K))_c) =
{\rm OMAX}_{\bR}(A(K))_c = 
{\rm OMAX}_{\bC}(A_{\bC}(K)) .$$
In the last equality we used \cite[Proposition 9.19]{BR} with $V = {\rm OMAX}(A_{\bR}(K))$, so that $V_c = A_{\bC}(K)$. 
However ${\rm OMAX}_{\bC}(A_{\bC}(K)) = A_{\bC}({\rm Max}_{\bC}(K))$ by the discussion above the lemma. 
It follows by the functorial correspondence between compact nc convex sets and operator systems that 
 $({\rm Max}_{\bR}(K))_c =  {\rm Max}_{\bC}(K)$.  \end{proof} 

 {\bf Remark.}  We saw in the proof above that $({\rm OMAX}_{\bR}(A(K)))_c = {\rm OMAX}_{\bC}(A_{\bC}(K))$.  This is related to the fact that 
 the `function system complexification' of a 
 real function system is unique.

  \bigskip
  
  For a complex operator system $V$  we  get  `minimal and maximal nc convex sets'   
    Min$(K)$ and Max$(K)$
     as exactly the nc convex sets corresponding to OMIN$(V)$ and OMAX$(V)$.     In the real case this      never works, as we have said repeatedly, unless
     $V$ has trivial (i.e.\ identity) involution. 
  
 \begin{example} \label{ointi} 
  As an illustration of Lemma \ref{omin2} 
 and Proposition \ref{omax}, 
 consider the real compact operator interval  $\bI$ from Examples \ref{ConvexInterval1} and \ref{ConvexInterval2}.
As we said there, the complexification of the real operator interval is the complex compact operator interval $\bJ$. 
If $K$ is the classical interval then $K = \bI_1 = \bJ_1$. 
It is probably well known that the complex operator system corresponding to $\bJ$ is the two dimensional function system and 
$C^*$-algebra $\ell^\infty_2(\bC)$, and similarly the real operator system corresponding to $\bI$ is
$\ell^\infty_2(\bR)$.   Indeed the reader can find a proof that works in both cases in \cite[Example 3.5]{BMcII}. 
  The latter operator system has trivial involution.    It is also well known that the usual ($C^*$-algebraic) operator system structure on
  $\ell^\infty_2(\bC)$ is both an OMAX and an OMIN.  Such equality is very rare.  Thus  
  Max$_{\bC}(K) = {\rm Min}_{\bC}(K)$ (see \cite[Example 16.4.4(1)]{Dav}).   Similarly $\bI = {\rm Max}_{\bR}(K) = {\rm Min}_{\bR}(K)$. 
  Thus indeed $$\bJ = {\rm Min}_{\bR}(K)_c = {\rm Min}_{\bC}(K) = {\rm Max}_{\bR}(K)_c = {\rm Max}_{\bC}(K)$$ as nc convex sets, with all equaling $K$ at the first level.
 \end{example}

        \begin{example} \label{newbee}    Consider the real unitary matrix $W = c(i) = \begin{bmatrix}
            0 & -1 \\ 1 & 0 
        \end{bmatrix},$ and the 
               noncommutative convex set of real matrices 
        \[K = \ncconv\{ W,-W \} = \ncconv\{ W \} \subset \cM(\bR).\]
        Somewhat surprisingly, and in stark contrast to the operator interval which as we just saw is a minimal nc convex set, 
        $K$ is not a minimal nc convex set, because the first level does not generate the entire nc convex set $K$.
However   its complexification is   a  minimal nc convex set.  
         From Theorem \ref{ncrealaffinecoml} we deduce that $K_c = \ncconv_{\bC}\{\ncconv_\bR\{ W \} \} = \ncconv_\bC\{ W \}$. A compression by $u$ shows $\pm i \in K_c$ so that $\ncconv_\bC\{\pm i\} \subseteq K_c$.  Also,
         $$W  =\frac{1}{2} \begin{bmatrix}
            1 \\i
        \end{bmatrix} i \begin{bmatrix}
            1 & -i
        \end{bmatrix} + \frac{1}{2}\begin{bmatrix}
            1 \\-i
        \end{bmatrix} (-i) \begin{bmatrix}
            1 & i
        \end{bmatrix}, $$
        so that $K_c = \ncconv_
        \bC\{\pm i\}$.  It is easy to see (multiplying by $i$) that this is nc affine isomorphic to 
        the complex operator interval         $\bJ$, so is  a  minimal nc convex set.          Since $\bJ$ is compact, so is $K_c$, and thus $K$ is a compact nc convex set by Theorem  \ref{ncrealaffinecoml}.
        We work more with these nc convex sets in the sequel \cite{BMcII}, where for instance we determine their corresponding operator systems and extreme points.  The real operator system corresponding to $K$ is the two dimensional real $C^*$-algebra $\bC_r$.  
        From this it also follows that $K$ is not a maximal nc convex set, since  the operator system corresponding to a maximal nc convex set has trivial involution. 
          \end{example}

One might think that it is obvious that  ${\rm Min}(K) \subseteq {\rm Max}(K)$ since ${\rm Max}(K)$ is a nc convex set containing $(K, \emptyset, \emptyset, \cdots)$.  However recall that ${\rm Min}(K)$ naturally `lives in'  ${\rm Max}(A(K)^*) = ({\rm Min}(A(K))^*$, being the noncommutative states on OMIN$(A(K))$, while ${\rm Max}(K)$ corresponds to the set of 
noncommutative states on OMAX$(A(K))$, and 
     naturally `lives in'  Min$(A(K)^*) = ({\rm Max}(A(K))^*$.  Of course 
     ${\rm Max}(K)$ may be viewed as consisting of the unital selfadjoint positive maps from OMIN$(A(K))$ into $M_n$, while ${\rm Min}(K)$ is the subset of ucp maps from OMIN$(A(K))$  into $M_n$.  This is not a problem for finite $n$, since one may just use the `product topology' (i.e.\ work entry-wise, see e.g.\ 1.6.4 in \cite{BLM}); the matrix spaces are isomorphic.  For infinite $n$ this suggests that perhaps the `ambient LCTVS operator space' for both ${\rm Min}(K)$ and ${\rm Max}(K)$ is Min$(A(K)^*)$.  In any case, 
     the identity map, which is the adjoint of the canonical ucp map $\Phi: {\rm OMAX}(A(K)) \to {\rm OMIN}(A(K))$,  yields a canonical nc affine continuous map 
     $\epsilon : {\rm Min}(K) \to {\rm Max}(K)$.
     It is surjective at the first  level of course.

\begin{prop} \label{omaxemb} Let $K$ be a classical compact convex set (in a dual Banach space). The canonical nc affine embedding 
  $\epsilon : {\rm Min}(K) \to {\rm Max}(K)$ is a nc topological affine embedding.  That is, $\epsilon$ is a homeomorphism onto its (compact) range for all $n$.   Similarly for a closed nc convex set $L$ in a  dual operator space $E$
                with $L_1=K$, with $L$ symmetric if $\bF = \bR$, there are   canonical nc topological affine embeddings           Min$(K) \subseteq L \subseteq {\rm Max}(K)$.   
These maps are the identity map on $K$ at level 1.     \end{prop}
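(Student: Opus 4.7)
The strategy is to realize each of the three embeddings as the dual (via Theorems \ref{AffineCateg}--\ref{ncSCateg}) of a ucp map of operator systems, verify injectivity at each finite level, and then use Lemma \ref{Affineemb} together with compactness of the source to upgrade a continuous nc affine injection to a nc topological affine embedding.

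For the canonical map $\epsilon: {\rm Min}(K) \to {\rm Max}(K)$, take $\Phi: {\rm OMAX}(A(K)) \to {\rm OMIN}(A(K))$ to be the identity linear map on the underlying vector space $A(K)$; it is ucp by the universal property of OMAX applied to the unital positive identity $A(K) \to A(K)$. Dually, $\epsilon = \Phi^{*}$ is continuous and nc affine, and at level $n$ it sends a ucp $\omega:{\rm OMIN}(A(K)) \to M_n$ to $\omega \circ \Phi$, i.e.\ the same underlying linear map viewed now as a ucp from OMAX. Distinct $\omega$'s give distinct linear maps $A(K) \to M_n$, so $\epsilon_n$ is injective; Lemma \ref{Affineemb} completes this part using compactness of ${\rm Min}(K)_n$.

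For the sandwich ${\rm Min}(K) \subseteq L \subseteq {\rm Max}(K)$, consider the restriction $\rho: A(L) \to A(L_1) = A(K)$, $f \mapsto f|_K$, which is unital, selfadjoint, and positive. In the complex case, or in the real symmetric case where $A(L)$ has trivial involution by Corollary \ref{coti}, (complex or real) Kadison duality identifies $A(L_1)$ with $A(L)$ as vector spaces, making $\rho$ a linear isomorphism. By the universal property of OMIN, $\rho: A(L) \to {\rm OMIN}(A(K))$ is ucp, and its dual is a continuous nc affine map ${\rm Min}(K) = \ncS({\rm OMIN}(A(K))) \to \ncS(A(L)) = L$. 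Symmetrically, $\rho^{-1}: A(K) \to A(L)$ is unital positive selfadjoint and hence ucp from ${\rm OMAX}(A(K))$ to $A(L)$ by the universal property of OMAX; its dual is $L \to {\rm Max}(K)$. At each finite level these dual maps are simply the set-inclusions of the respective ucp collections into the common ambient set of linear maps $A(K) \to M_n$, hence injective, and Lemma \ref{Affineemb} upgrades them to nc topological affine embeddings.

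The main obstacle is the identification $A(L_1) \cong A(L)$ as vector spaces, needed for $\rho^{-1}$ to exist. In the real non-symmetric case (excluded by hypothesis) only $A_{\bR}(L_1) = A(L)_{\rm sa} \subsetneq A(L)$, so the candidate $A(K) \to A(L)$ is a non-surjective inclusion and the induced $L \to {\rm Max}(K)$ fails to be injective; this is exactly why the symmetric hypothesis is imposed in the real case. Once the identification is in place, the remainder is a routine application of categorical duality combined with Lemma \ref{Affineemb}.
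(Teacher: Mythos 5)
Your proposal is correct and follows essentially the same route as the paper: realize the maps as duals of the canonical ucp maps ${\rm OMAX}(A(K)) \to A(L) \to {\rm OMIN}(A(K))$ (using the symmetric/trivial-involution identification of $A(L)$ with $A(K)$, which the paper gets from Theorem \ref{DK2.5.8} rather than from Kadison duality directly), check levelwise injectivity by composing with a linear bijection, and invoke Lemma \ref{Affineemb} with compactness. One small caveat: injectivity is needed at all levels $n \leq \kappa$, not just finite ones, but your composition-with-a-bijection argument applies verbatim at infinite levels.
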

            
            \begin{proof}  To see that   $\epsilon$ is one-to-one note that if $\varphi : {\rm OMIN}(A(K)) \to M_n$ satisfies  $\varphi  \circ \Phi =0$ for $\Phi$ as above, then $\varphi =0$. The statement then follows from Lemma \ref{Affineemb}. 
            
            For the last assertion we suppose that $L$ corresponds to an operator system $V$. 
            If $L$ is symmetric then $V$ has trivial  involution.  `Level 1' of $V$ is a function system with state space $K$.             The canonical ucp maps
            $${\rm OMAX}(A(K)) \to V = A(L) \to {\rm OMIN}(A(K))$$  
            dualize to give  continuous nc affine maps $${\rm Min}(K) \to L \to {\rm Max}(K) \subseteq {\rm Min}(A(K)^*).$$ 
            These maps are the identity map on $K$ at level 1. 
            As in the last paragraph these maps are one-to-one, and   nc topological affine embeddings by Lemma \ref{Affineemb}.  \end{proof} 
            
            The following is the real version of \cite[Theorem 2.5.8]{DK}.  We define $A(K_2,M_2(\bR))$ to be the (classical) affine continuous maps $K_2 \to M_2(\bR)$.  (In the result below we can also assume if desired that  these functions satisfy  the compatibility conditions (2), (3) in the definition of $A(K)$ but with all integers $\leq 2$.)

   \begin{thm}      \label{DK2.5.8}   If $K$ is a symmetric real compact nc convex set
   then the canonical restriction map $\rho : A(K) \to A(K_1)$ is an 
   isometric 
   unital order isomorphism.     More generally, if $K$ is any real compact nc convex set then the canonical restriction map 
   $\rho_2 : A(K) \to A(K_2,M_2(\bR))$ is a contractive selfadjoint unital order embedding, and $\| f \| \leq 2 \| f_{|{K_2}} \|$
   for $f \in A(K)$.     \end{thm}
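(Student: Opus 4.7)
The plan is to prove both parts via two-by-two compressions. For Part 1 the symmetric hypothesis is the crucial ingredient: by Corollary \ref{coti}, it is equivalent to $V \cong A(K)$ having trivial involution, which forces every $f \in A(K)$ to satisfy $f^* = f$, i.e., $f(k)^\tran = f(k)$; so $f(k) \in M_n(\bR)$ is symmetric for every $k \in K_n$.

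For Part 1, first note that each unit $\xi \in \bR^n$ is an isometry in $M_{n,1}$, so $\xi^\tran k \xi \in K_1$ and $\xi^\tran f(k) \xi = f(\xi^\tran k \xi) = \rho(f)(\xi^\tran k \xi)$ by the nc affine property. Since $f(k)$ is symmetric, its operator norm equals $\sup_{\|\xi\|=1}|\xi^\tran f(k) \xi|$ and its positivity is detected by this quadratic form. Hence $\|f\|_{A(K)} = \|\rho(f)\|$, and $f \geq 0$ iff $\rho(f) \geq 0$, so $\rho$ is an isometric unital order embedding. For surjectivity, $V = A(K)$ with trivial involution is at level $1$ a real function system (archimedean ordered unit space) whose state space is exactly $K_1$, so by Kadison's representation theorem $v \mapsto \hat{v}_{|K_1}$, which is $\rho$, is a surjective isometric order isomorphism onto $A(K_1)$.

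For Part 2, unitality, selfadjointness and contractivity of $\rho_2$ are immediate. To see $\rho_2$ is an order embedding, assume $f_{|K_2} \geq 0$ and fix $k \in K_n$. For $i \neq j$, $\beta = [e_i, e_j] \in M_{n,2}$ is an isometry and $\beta^\tran f(k) \beta = f(\beta^\tran k \beta)$ is symmetric and positive; inspecting entries yields $f(k)_{ij} = f(k)_{ji}$, so $f(k)$ is symmetric. Then for any unit $\xi \in \bR^n$ and any unit $\xi' \perp \xi$ (using $k \oplus k \in K_2$ when $n=1$), $\beta = [\xi, \xi']$ is an isometry and the $(1,1)$-entry of $\beta^\tran f(k) \beta \geq 0$ is $\xi^\tran f(k) \xi \geq 0$; hence $f(k) \geq 0$. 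For the norm bound decompose $f = f_s + f_a$ with $f_s = (f + f^*)/2$ and $f_a = (f - f^*)/2$. The selfadjoint part $f_s$ has symmetric values, so by the Part 1 argument $\|f_s\| \leq \|(f_s)_{|K_1}\| = \|f_{|K_1}\| \leq \|f_{|K_2}\|$ (since scalar transposes are trivial and $k \oplus k \in K_2$). For $f_a$, given unit $\xi, \eta \in \bR^n$ either they are linearly dependent (then $\eta^\tran f_a(k) \xi = 0$ since $f_a(k)$ is antisymmetric), or they span a $2$-dimensional subspace; picking an orthonormal basis $u_1, u_2$ of this span and $\beta = [u_1, u_2] \in M_{n,2}$, one has $|\eta^\tran f_a(k)\xi| \leq \|\beta^\tran f_a(k)\beta\|$, and $\beta^\tran f_a(k)\beta$ is the antisymmetric part of $f(\beta^\tran k\beta) \in M_2(\bR)$, hence has norm at most $\|f(\beta^\tran k\beta)\| \leq \|f_{|K_2}\|$. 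Thus $\|f_a\| \leq \|f_{|K_2}\|$, and the triangle inequality gives $\|f\| \leq 2\|f_{|K_2}\|$.

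The main obstacle is the surjectivity step in Part 1: we must show every real-valued classical affine continuous function on $K_1$ is $\rho(f)$ for some $f \in A(K)$. The cleanest route is Kadison's representation theorem applied to $V$, whose level-$1$ structure is a real function system with state space $K_1$. One could also attempt a direct construction of the inverse by defining $\tilde g(k) \in M_n(\bR)$ as the symmetric matrix with quadratic form $\xi \mapsto g(\xi^\tran k \xi)$, but verifying that this is a genuine symmetric bilinear form (so that $\tilde g$ is well-defined and nc affine) essentially reduces to extending $g$ affinely off $K_1$, amounting to the same Kadison input.
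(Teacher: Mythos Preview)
Your argument is correct. For Part~1 both you and the paper ultimately invoke Kadison's function representation applied to $V=A(K)$ (which, by Corollary~\ref{coti}, has trivial involution and is therefore a real function system with state space $K_1$); your additional quadratic-form computation that $\|f(k)\|=\sup_{\|\xi\|=1}|f(\xi^\tran k\xi)|$ is a clean direct verification of the isometry/order-embedding half, though strictly speaking Kadison already delivers it.

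For Part~2 the approaches differ substantially. The paper argues by complexification: given that $\Phi(v)\ge 0$ (resp.\ $\|\Phi(v)\|\le 1$) for every ucp $\Phi:V\to M_2(\bR)$, it applies this to $c\circ\varphi_{|V}$ for each complex state $\varphi$ on $V_c$, obtains $\varphi(v)\ge 0$ (resp.\ $|\varphi(v)|\le 1$), and then quotes the complex version of Theorem~2.5.8 in \cite{DK} to conclude $v\ge 0$ (resp.\ $\|v\|\le 2$). Your route is entirely intrinsic to the real category: two-column compressions $\beta=[e_i,e_j]$ force $f(k)$ to be symmetric, one-column compressions then give positivity, and for the norm bound you split $f=f_s+f_a$ and control each piece by $\|f_{|K_2}\|$ via the same compression mechanism (using that antisymmetric matrices vanish on the diagonal and that any pair of unit vectors lies in a $2$-plane). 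This avoids both the complex theory and the numerical-radius input from \cite{DK}, at the cost of a slightly longer computation; conversely the paper's proof is shorter but imports the complex result as a black box. Both yield the same constant~$2$.
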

            
            \begin{proof}  Clearly $\rho$ is  contractive, 
   unital and positive.   Indeed $\rho$ is  simply Kadison's function representation  (see e.g.\ Section 4.3 of 
\cite{KRI}).    
   For the second assertion, certainly $\rho_2 : A(K) \to A(K_2,M_2(\bR))$ is contractive selfadjoint unital and positive.
   Let $V = A(K), v \in V$, and recall that we may take $K_2 = {\rm UCP}(V, M_2(\bR))$. 
   Suppose that $\Phi(v) \geq 0$ (resp.\  $\| \Phi(v) \| \leq 1$) for all ucp $\Phi : V \to M_2(\bR)$.  For a complex state $\varphi$ on $V_c$ we have $c \circ \varphi_{|V}$ is a ucp $ V \to M_2(\bR)$.   Thus 
   $c(\varphi(v)) \geq 0$  (resp.\  $\| c(\varphi(v)) \| \leq 1$), and so 
   $\varphi(v) \geq 0$  (resp.\  $| \varphi(v) | \leq 1$).    By \cite[Theorem 2.5.8]{DK} 
   we have $v \geq 0$  (resp.\ $\| v \| \leq 2$). 
   \end{proof} 

  {\bf Remarks.}    1)\ The first assertions of \cite[Theorem 2.5.8]{DK} are true for all real symmetric compact nc convex sets by a similar proof (e.g.\ using the real form of the polarization identity).   
  
  \smallskip
  
  2)\ The necessity of the `symmetric' condition here, and the use of $K_2$ versus $K_1$, 
   is clear by examining the case of the quaternions.  Matt Kennedy asked us
    if one could characterize
    exactly when $\rho$ is an order isomorphism.  In fact this holds exactly when $K$ is symmetric by Corollary \ref{coti}. 
    Indeed suppose that $K$ is not symmetric, so that $V = A(K)$ has nontrivial involution, or equivalently possesses 
    nonzero antisymmetric elements.  If $0 \neq x \in V_{\rm as}$  then $\rho(x)(\varphi) = x(\varphi)$ for $\varphi \in K$, while 
    $\rho(x^*)(\varphi) = x(\varphi)$.  Since $x^*=-x$ we have $x(\varphi) = 0$.   That is, $\rho(x) = 0$. 
    Hence $\rho$ is not even bijective.

     \medskip

     The following consequence, extracted from the last proof,  may be viewed as an `improvement' on part of \cite[Corollary 3.2]{BR}.   
     The main idea in the proof we gave for that though in \cite[Corollary 3.2]{BR} is essentially the same as our proof above.  
     
\begin{cor} \label{posc} Let $V$ be a 
real operator system, and $v \in V$.  If $\Phi(v) \geq 0$  (resp.\  $\| \Phi(v) \| \leq 1$) for all ucp $\Phi : V \to M_2(\bR)$ then  $v \geq 0$  (resp.\ $\| v \| \leq 2$). 
 \end{cor}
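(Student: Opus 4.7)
The plan is to reduce to the complex case by passing through the complexification $V_c$, since the statement is essentially the real shadow of \cite[Theorem 2.5.8]{DK}. In fact, reading the proof of Theorem \ref{DK2.5.8} above, this corollary is already contained in that argument; the task here is simply to isolate it cleanly as a statement about a single real operator system without reference to a compact nc convex set.

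First I would let $\varphi$ be an arbitrary complex state on $V_c$, and set $\Phi = c \circ \varphi_{|V} : V \to M_2(\bR)$, where $c : \bC \to M_2(\bR)$ is the canonical embedding $a+ib \mapsto \bigl[\begin{smallmatrix} a & -b \\ b & a \end{smallmatrix}\bigr]$. Since $\varphi$ is a complex state of $V_c$, its restriction to $V$ is a real ucp map $V \to \bC$ (viewed as real operator system); composing with the unital real $*$-homomorphism $c$, which is itself ucp, shows that $\Phi$ is ucp from $V$ to $M_2(\bR)$. Note that $c$ is an isometric unital $*$-isomorphism of $\bC$ onto the real $*$-subalgebra of $M_2(\bR)$ consisting of matrices of that form, so it preserves positivity and norm exactly.

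Applying the hypothesis gives that for every such $\varphi$, either $c(\varphi(v)) \geq 0$ in $M_2(\bR)$, or $\|c(\varphi(v))\| \leq 1$. Using the properties of $c$ just recorded, we conclude $\varphi(v) \geq 0$ in $\bC$ (respectively $|\varphi(v)| \leq 1$) for every complex state $\varphi$ on $V_c$. Now we apply the complex case of \cite[Theorem 2.5.8]{DK} to the element $v$ of the complex operator system $V_c$: the first conclusion yields $v \geq 0$ in $V_c$, and the second yields $\|v\|_{V_c} \leq 2$. Since the canonical inclusion $V \hookrightarrow V_c$ is a unital complete order embedding and complete isometry, this gives $v \geq 0$ in $V$ and $\|v\|_V \leq 2$, as required.

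There is no real obstacle here beyond the bookkeeping: the only subtle point worth double-checking is that the complex case of \cite[Theorem 2.5.8]{DK} indeed applies to $V_c$ as stated, since the ucp maps $V_c \to M_2(\bC)$ that it quantifies over include the complex states (composed with scalar $\mapsto$ scalar multiple of $I_2$ if one insists on $M_2$-valued targets), and these are what we have controlled. The passage from two-dimensional real target $M_2(\bR)$ to the one-dimensional complex target $\bC$ costs us nothing thanks to the $*$-isomorphism $c$, which is why the factor $2$ in $\|v\| \leq 2$ is inherited directly from the complex statement and not improved.
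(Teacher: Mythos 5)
Your proposal is correct and follows essentially the same route as the paper: the corollary is extracted from the proof of Theorem \ref{DK2.5.8}, where one composes an arbitrary complex state $\varphi$ of $V_c$ with the embedding $c:\bC\to M_2(\bR)$ to obtain a ucp map $V\to M_2(\bR)$, applies the hypothesis, and then invokes the complex case of \cite[Theorem 2.5.8]{DK} together with the fact that $V\hookrightarrow V_c$ is a unital complete order embedding and complete isometry. Nothing further is needed.
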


        Similarly one may describe the noncommutative state spaces Min$_k(K)$ and Max$_k(K)$ of ${\rm OMIN}_k(V)$ and ${\rm OMAX}_k(V)$ for a real or complex operator system $V = A(K)$, and $k \in \bN$. E.g.\ we define Max$_k(K)$ by the functorial correspondence between compact nc convex sets and
operator systems, via $A({\rm Max}_k(K)) = {\rm OMAX}_k(A(K))$. So 
         ${\rm Max}_k(K) =  {\rm ncS(OMAX}_k(A(K)))$.  This agrees with $K$ up to level $k$, since $n$-positive states into $M_n$ are ucp, hence are in $K_n$ for $n \leq k$. 
         We will not however take the time to add the details here. 

\bigskip

{\bf Remarks.}  1)\ 
One cannot however expect  analogues of  Propositions \ref{omin2}  and  \ref{omax2} to hold in general for Min$_k$ and Max$_k$. 
Indeed if  $K$ is a compact nc convex set (in a real dual operator space) then 
 often ${\rm Max}_{\bR,k}(K)_c \neq {\rm Max}_{\bC,k}(K_c)$.
  Indeed this fails in general, as do the matching operator system equalities (matching via the functorial correspondence between compact nc convex sets and
operator systems).   For $k > 1$ it fails because of the problems with complexifying entanglement breaking maps as seen in \cite{CDPR2} and  \cite[Section 9]{BR} (this is spelled out in more detail in later revisions of \cite{BR}). 
For $k = 1$ it can fail because of the existence of real entangled states
  that are complex separable (i.e.\ nonentangled) (see \cite{CDPR2} and  \cite[Section 9]{BR}); and of course
  OMAX$(A(K))$ may not exist as we have said.   Indeed for the quaternions ${\rm Max}_{\bR}(K)$ has one point, while 
  $(K_c)_1$ consists of $2 \times 2$ complex density matrices.
  
  \medskip
              
        2)\   The minimal and maximal nc convex envelopes $\hat{L}$ and $\check{L}$ of a weak* closed nc convex set in a dual operator space
        $E$, constructed at the end of Section \ref{SecBipolarThrm} in connection with  Effros and Winkler's application of  the bipolar theorem, are not in full generality necessarily well related to Min and Max as we have define them above.   We now explain what we mean by this, together with an example. 
        Indeed, if $L$ is a closed nc convex set in $E$
                with $L_1=K$, then one cannot in general  expect 
        Min$(K) \subseteq L \subseteq {\rm Max}(K)$.   Here `$\subseteq$' indicates a canonical nc topological affine embedding.  
                For instance, the second  inclusion here is  certainly not valid if $L$ is the nc state space of the quaternions, where $K$ is singleton. 
        In this case ${\rm Max}(K)$ is affine isomorphic to the noncommutative state 
        space of $\bR$, a nc convex set with singleton matrix levels, while $L_2$ is not singleton.  It is interesting to compute $\check{L}$ in this example: 
        we leave it to the reader to check that its $n$th level         is  the set of unital maps $\varphi : \bH \to M_n$ with $\varphi(b)$ skew (antisymmetric) for $b \in \{ i, j, k \}$.
        That Min$(K) \subseteq L$ is usually clear in examples, but may not be very helpful (as we saw in the example at the end of Section \ref{SecBipolarThrm}).  
        
        On the other hand if $L$ is a closed nc convex set  in a dual operator space $E$, with
                 $L$ symmetric in the real case), 
                then we showed in Proposition \ref{omaxemb} that there are  topological nc affine embeddings Min$(K) \to L \to {\rm Max}(K)$.  The range of the
        first will be $\hat{L}$ since the nc convex hull of $K$ is dense in both ${\rm Min}(K)$ and $\hat{L}$.   So we may identify
        ${\rm Min}(K)$ and $\hat{L}$.   
        Alternatively, this also follows from the Remark after Proposition \ref{omin}. 
         We imagine that  there is also a way to identify ${\rm Max}(K)$ and $\check{L}$ in this case, but will not pursue this further here.  
  \end{section}

        \begin{section}{Noncommutative Functions}\label{SecNoncFunct}

        \begin{subsection}{Real NC Functions}
            Let $K$ be a real compact nc convex set. A nc function is a map $f: K \to \cM(\bR)$ that is graded, preserves direct sums, and is unitarily equivariant. More specifically, it satisfies the following properties
            \begin{enumerate}
                \item $f(K_n) \subseteq M_n(\bR)$
                \item $f(\sum \alpha_i x_i \alpha^\tran_i) = \sum \alpha_i f(x_i) \alpha^\tran_i$ for every family of $\{x_i \in K_{n_i}\}$ and collection of isometries $\{\alpha_i \in M_{n,n_i}(\bR)\}$ such that $\sum \alpha_i \alpha_i^\tran = 1_n$.
                \item $f(\beta x\beta^\tran) = \beta f(x) \beta^\tran$ for every $x \in K_n$ and unitary $\beta \in M_n(\bR)$.
            \end{enumerate}
            (Note that (3) is in fact a special case of (2).) 
            We say that $f$ is bounded if it is uniformly bounded for all $k \in K$. The space of all real bounded nc functions on $K$ is $B(K)$. This has the uniform norm
            \[||f|| = \sup_{k \in K} ||f(k)|| .\]
            We can similarly define $B(K,L)$ for $K,L$ real nc convex sets to be the nc functions from $K$ to $L$ which are bounded. Here, a nc function is the same definition as above, but with $\cM(\bR)$ replaced by $L$.
            
            As in \cite{DK}, we have
            
            \begin{lemma}\label{bkisc} 
              If $K$ is a real compact nc convex set then $B(K)$ is a real $C^*$-algebra with the uniform norm and point-wise adjoint/product.
            \end{lemma}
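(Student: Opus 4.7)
The plan is to realise $B(K)$ as a closed real unital $*$-subalgebra of the real $C^{*}$-algebra $\mathfrak{A} = \prod_{n \leq \kappa}\, \ell^{\infty}(K_n, M_n(\bR))$ equipped with pointwise operations, transpose as involution, and the supremum of operator norms over all $k$ and all levels. Each $M_n(\bR)$ is a real $C^{*}$-algebra and an $\ell^{\infty}$-product of real $C^{*}$-algebras is a real $C^{*}$-algebra, so $\mathfrak A$ is one; once $B(K)$ is identified as a closed $*$-subalgebra it inherits this structure. The proof then reduces to showing (i) that pointwise sum, scalar multiple, product, and the transpose involution $f^{*}(k) := f(k)^{\tran}$ preserve the three defining conditions of a real nc function, and (ii) that $B(K)$ is norm-closed in $\mathfrak A$.

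For (i), linearity and the transpose pass trivially through direct sums and unitary conjugation. For the pointwise product, on a direct sum $x_1 \oplus x_2$ both $f$ and $g$ are block-diagonal, so their product is block-diagonal, giving $(fg)(x_1 \oplus x_2) = (fg)(x_1) \oplus (fg)(x_2)$; and for conjugation by an orthogonal $\beta$, the identity $\beta^{\tran}\beta = I$ yields $(fg)(\beta x \beta^{\tran}) = \beta f(x)\beta^{\tran} \cdot \beta g(x)\beta^{\tran} = \beta (fg)(x) \beta^{\tran}$. Condition (2) in the definition of an nc function now reduces to these two cases: any family of isometries $\{\alpha_i \in M_{n,n_i}(\bR)\}$ with $\sum_i \alpha_i \alpha_i^{\tran} = 1_n$ has pairwise orthogonal ranges (each $\alpha_i \alpha_i^{\tran}$ is a projection and a sum of projections equalling $I$ forces pairwise orthogonality), so stacking them horizontally produces a real unitary $U$ with $\sum_i \alpha_i x_i \alpha_i^{\tran} = U (\bigoplus_i x_i) U^{\tran}$. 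The identities $(fg)^{*} = g^{*}f^{*}$, $f^{**} = f$, and $\|f^{*}f\| = \|f\|^{2}$ (the last by taking the supremum of the $C^{*}$-identity in each $M_n(\bR)$) are then immediate.

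For (ii), a uniform limit of bounded nc functions is bounded and satisfies each of the three nc function axioms by passing to the limit in the defining equalities, so $B(K)$ is norm-closed in $\mathfrak A$.

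I do not expect any serious obstacle: the whole argument mirrors the complex case of the analogous statement in \cite{DK}, with the complex adjoint replaced by the real transpose. The one delicate point is checking that the pointwise product respects a general nc convex combination (not merely a binary direct sum and a unitary conjugation), which, as indicated above, reduces to the elementary observation that isometries $\alpha_i$ with $\sum_i \alpha_i\alpha_i^{\tran} = I$ automatically have mutually orthogonal ranges.
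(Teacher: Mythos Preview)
Your proof is correct. Both your argument and the paper's hinge on the same key observation---that isometries $\alpha_i$ with $\sum_i \alpha_i\alpha_i^{\tran}=1_n$ have mutually orthogonal range projections, so that any nc convex combination factors as a unitary conjugate of a direct sum. The organisation differs slightly: the paper verifies completeness directly by showing that an absolutely summable series of nc functions has an nc function as its sum (with a Fubini-type interchange of the two sums for condition~(2)), and then separately checks that $(1+f^{*}f)^{-1}$ lies in $B(K)$; you instead embed $B(K)$ isometrically into the real $C^{*}$-algebra $\mathfrak{A}=\prod_{n}\ell^{\infty}(K_n,M_n(\bR))$ and show it is a norm-closed $*$-subalgebra. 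Your route is a bit more economical, since closedness of a $*$-subalgebra of a $C^{*}$-algebra automatically gives the $C^{*}$-structure without any invertibility check, and uniform limits are easier to pass through condition~(2) than infinite series are. One small point worth making explicit in your write-up: when you say closure follows ``by passing to the limit in the defining equalities,'' the only non-obvious case is the possibly infinite sum in condition~(2), but your own unitary-plus-direct-sum decomposition shows $\bigl\|\sum_i \alpha_i\bigl(f_r(x_i)-f(x_i)\bigr)\alpha_i^{\tran}\bigr\|=\sup_i\|f_r(x_i)-f(x_i)\|\leq\|f_r-f\|$, so this is immediate.
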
 

            \begin{proof}
                The main difficulty in showing $B(K)$ is a $C^*$-algebra is showing it is complete.  For the reader's convenience we give the argument to show that it works in the real case.  Let $f^r \in B(K)$ be a sequence such that $\sum_{r=1}^\infty ||f^r|| < \infty$. Define $f: K \to \cM(\bF)$ by $f(x) = \sum_{r=1}^\infty f^r(x)$ for $x \in K_m$.  This converges (absolutely) since $M_n(\bF)$ is complete.  Because the $f^r$ are all graded and unitarily equivariant, $f$ will be too.  Condition (3) in the definition above is easy to check.  For (2), let $\{x_i \in K_{n_i}\}$ be set of elements in $K$, and $\alpha_i \in M_{n,n_i}(\bF)$ a family of isometries such that $\sum_i \alpha_i \alpha_i^* = 1_n$. Take $\xi, \eta \in \ell_n^2$. Then, we have that
                \begin{align*}
                    \sum_{r=1}^\infty \sum_i |\ip{\alpha_i f^r(x_i) \alpha_i^* \xi}{\eta}|
                    &\leq \sum_{n=1}^\infty \sum_i |\ip{\alpha_i f^r(x_i) \alpha_i^* \xi}{\eta}|
                    \\&\leq \sum_{n=1}^\infty \big(\sum_i ||f^r(x_i) \alpha_i^*\xi)||^2\big)^{1/2} \big(\sum_{i}||\alpha_i^*\eta||^2 \big)^{1/2}
                    \\&\leq \sum_{n=1}^{\infty}||f^r || \big(\sum_i || \alpha_i^* \xi)||^2\big)^{1/2} \big(\sum_{i}||\alpha_i^* \eta||^2 \big)^{1/2}
                    \\& = \sum_{n=1}^{\infty}||f^r || \| \xi \| \| \eta \|,
                \end{align*}
                since  e.g.\ 
                $\big(\sum_i || \alpha_i^*\xi||^2\big)^{1/2}
                    = \big(\sum_i \ip{\alpha_i \alpha_i^* \xi}{\xi}\big)^{1/2}
                    = ||\xi||.$
                Thus we may interchange the order of summation in 
                $\sum_{r=1}^\infty \sum_i \, \ip{\alpha_i f^r(x_i) \alpha_i^* \xi}{\eta}$.  We see that 
                 $$\langle f(\sum_i \, \alpha_i x_i \alpha^*_i) \xi , \eta \rangle  = \langle \sum \alpha_i f(x_i) \alpha^*_i  \xi , \eta \rangle $$ 
                as desired so that (2) holds.
                
                We check that $1 + f^* f$ is invertible  for all $f \in B(K)$.  Indeed $g(x) = (1 + f(x)^* f(x))^{-1}$ clearly defines a bounded graded function, and  checking  item (3) in the definition of nc function is easy. 
           As for item (2) in that definition, suppose that we have $\alpha_i \in M_{n,n_i}(\bR)$ such that $\sum \alpha_i \alpha_i^\tran = 1_n$ and $x_i \in K_{n_i}$. Because $p_i = \alpha_i \alpha_i^\tran$ are mutually orthogonal projections which sum to $1$ we have  $\alpha_i^\tran \alpha_j = \delta_{ij} 1_{n_i}$. So, 
            \begin{align*}
                g(\sum \alpha_i x_i \alpha_i^\tran)
                &= (1+ f(\sum \alpha_i x_i \alpha_i^\tran)^* f(\sum \alpha_j x_j \alpha_j^\tran))^{-1}
                \\&= (1+ \sum \alpha_if(x_i)^*f(x_i) \alpha_i^\tran)^{-1}
                \\&= (\sum \alpha_i (1+f(x_i)^*f(x_i))\alpha_i^\tran)^{-1}
            \end{align*}
            where the second equality is because of orthogonality. 
                       The inverse of $\sum \alpha_i (1+f(x_i)^*f(x_i))\alpha_i^\tran$ is  $\sum \alpha_i (1+f(x_i)^*f(x_i))^{-1}\alpha_i^\tran$. Indeed $(\sum p_i z_i p_i)^{-1} = \sum p_i w_i p_i$ if $z_i w_i = w_i z_i = p_i$ and $z_i, w_i \in p_i M_n p_i$.
            This completes the proof.
            \end{proof}
            As in the complex case, $A(K) \hookrightarrow B(K)$ and we define $C(K)$ to be the $C^*$-algebra generated by $A(K)$ in $B(K)$.

            \begin{subsection}{Maximal C*-algebra}
            Let $S$ be a real (or complex) operator system. The maximal $C^*$-algebra generated by $S$, denoted $C^*_{\mathrm{max}}(S)$ is the $C^*$-algebra satisfying the following universal property:

            \[
            \begin{tikzcd}
            S \arrow[r, "\iota", hookrightarrow] \arrow[dr, "\varphi"', hookrightarrow] & C^*_{\max}(S) \arrow[d, "\pi", dashed] \\
            & A
            \end{tikzcd}
            \]
            where $\iota$ is a ucoi into a $C^*$-algebra $A$ such that $C^*(\iota(S)) = C^*_{\mathrm{max}}(S)$, $\varphi$ is a ucoe such that $C^*(\varphi(S)) = A$, and $\pi$ is an induced $*$-homomorphism. 

            \begin{lemma}\label{MaxCCommC} \cite[Lemma 5.1] {BR} 
               For a real operator system   $S$ we have $C^*_{\mathrm{max}}(S_c) \cong C^*_{\mathrm{max}}(S)_c$ where the prior is the complex maximal $C^*$-algebra of the complex operator system $S_c$ and the latter is complexification of the real maximal $C^*$-algebra of $S$.
            \end{lemma}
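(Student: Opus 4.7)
The plan is to prove the isomorphism by verifying that $C^*_{\max}(S)_c$ satisfies the universal property that defines $C^*_{\max}(S_c)$. First I would show that the canonical inclusion $\iota : S \hookrightarrow C^*_{\max}(S)$, after complexification, yields a ucoe $\iota_c : S_c \hookrightarrow C^*_{\max}(S)_c$ whose image generates $C^*_{\max}(S)_c$ as a complex $C^*$-algebra. Both facts are essentially formal: complexification of real operator systems is functorial and sends ucoe to ucoe (as recalled in Section \ref{SecPreliminaries}), and the complexification of a real $C^*$-algebra generated by $S$ is generated as a complex $C^*$-algebra by the image of $S_c$ under $\iota_c$, since elements of the complexification are sums $a + ib$ with $a, b$ in the original algebra.

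Next, to establish the universal property, I would take an arbitrary ucoe $\varphi : S_c \hookrightarrow A$ into a complex $C^*$-algebra $A$ with $C^*(\varphi(S_c)) = A$, and produce a $*$-homomorphism $\pi : C^*_{\max}(S)_c \to A$ with $\pi \circ \iota_c = \varphi$. To do so, restrict $\varphi$ to $S$ to obtain a real ucoe $\varphi_0 : S \to A$, where $A$ is viewed as a real $C^*$-algebra by forgetting its complex structure. The universal property of $C^*_{\max}(S)$ applied to $\varphi_0$ then yields a real $*$-homomorphism $\pi_0 : C^*_{\max}(S) \to A$ extending $\varphi_0$. Finally, I would $\bC$-linearly extend by setting $\pi(a + ib) = \pi_0(a) + i\pi_0(b)$ for $a, b \in C^*_{\max}(S)$; this makes sense because $A$ is already complex, so $i \in A$. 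A short calculation using $(a+ib)^* = a^* - ib^*$ on $C^*_{\max}(S)_c$ and the fact that $\pi_0$ is a real $*$-homomorphism shows that $\pi$ is a complex $*$-homomorphism.

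To conclude, I would check that $\pi \circ \iota_c = \varphi$ (both are $\bC$-linear and agree with $\varphi_0$ on $S$), that $\pi$ is surjective since $\varphi(S_c) \supseteq \varphi_0(S)$ generates $A$, and that $\pi$ is unique since any $*$-homomorphism extending $\varphi$ is determined on the $*$-algebra generated by $\iota_c(S_c)$, which is norm-dense in $C^*_{\max}(S)_c$. By the universal property characterization of $C^*_{\max}(S_c)$ recalled in the diagram above the lemma, this forces $C^*_{\max}(S_c) \cong C^*_{\max}(S)_c$.

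The main obstacle will be the bookkeeping surrounding the two different notions of complexification (of the real operator system $S$ and of the real $C^*$-algebra $C^*_{\max}(S)$) and checking that these are compatible in the sense needed. In particular, one must ensure that the complexified map $\pi$ lands in $A$ itself rather than in some larger complexification of $A$ viewed as a real algebra; this works precisely because $A$ is already complex. A secondary point is verifying that the extension $\pi_0$ produced from the real universal property is really a homomorphism of real $C^*$-algebras with codomain $A$ (and not merely $A_r$), which ultimately reduces to the observation that the real $*$-algebraic structure on $A_r$ agrees with that inherited from $A$.
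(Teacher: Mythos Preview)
Your argument is correct and is the natural universal-property verification. Note, however, that the paper itself does not prove this lemma: it simply cites \cite[Lemma 5.1]{BR}, so there is no ``paper's proof'' to compare against here. Your approach is exactly the standard one and is almost certainly what appears in \cite{BR}: show that $\iota_c : S_c \hookrightarrow C^*_{\max}(S)_c$ is a generating ucoe, then factor an arbitrary complex ucoe $\varphi : S_c \to A$ through $C^*_{\max}(S)_c$ by restricting to $S$, invoking the real universal property, and $\bC$-linearly extending back using that $A$ already carries a complex structure.

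One small point of bookkeeping worth tightening: when you invoke the real universal property for $\varphi_0 = \varphi|_S$, the hypothesis as phrased in the paper asks that the target be the real $C^*$-algebra generated by $\varphi_0(S)$, which need not be all of $A_r$. This is harmless---you get $\pi_0 : C^*_{\max}(S) \to C^*_{\bR}(\varphi_0(S)) \subseteq A_r$ and then compose with the inclusion---but it would be cleaner to say so explicitly rather than write $\pi_0 : C^*_{\max}(S) \to A$ directly. Your closing remarks about the two complexifications being compatible and about $\pi$ landing in $A$ rather than in $(A_r)_c$ correctly identify the only genuinely delicate step, and your resolution (that $A$ is already complex, so $i\pi_0(b)$ makes sense in $A$) is exactly right.
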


            For any compact nc convex set $K$ we have $K \cong \ncS(A(K))$, and therefore any $k \in K_n$ corresponds to a nc state from $A(K)$ to $M_n$. The universal property of $C^*_{\text{max}}(A(K))$ gives a $*$-homomorphism $\delta_x: C^*_{\rm{max}}(A(K)) \to M_n$ such that $\delta_x \circ \iota = \hat{x}$. Taking the double adjoint gives a normal $*$-homomorphism $\delta_x^{**}: C^*_{\rm{max}}(A(K))^{**} \to M_n$ and with this we define the map $\sigma: C^*_{\rm{max}}(A(K))^{**} \to B(K)$ by 
            \[\sigma(b)(x) = \delta_x^{**}(b)\]
             for $b \in C^*_{\rm{max}}(A(K))^{**} $ and $x \in K$. Theorem $4.3.3$ of \cite{DK}
             (Theorem 16.8.10 of \cite{Dav}) shows that for a complex compact nc convex set, $B(K)$ is von Neumann algebraically  isomorphic to $C^*_{\max, \bC}(A(K))^{**}$ via the map $\sigma$.
                          Their proof also shows that $\sigma$ restricts to an isomorphism between $C^*_{\max, \bC}(A(K))$ and $C(K)$ and that elements of $C(K)$ are the point-strong continuous nc functions on $K$. 
     We will prove the real analogue of Theorem $4.3.3$ using complexification. Therefore, we need the following lemmas:

        \begin{lemma}\label{bext} For real  nc convex sets $K, L$, every real  bounded nc  map $f : K \to L$ has a unique complex  bounded nc  extension $f_c : K_c \to L_c$. 
            If $L$ is complex bounded nc affine there is a  complex  bounded nc  extension $K_c \to L$. These extensions are strongly continuous if $f$ is strongly  continuous.
            \end{lemma}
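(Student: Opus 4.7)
The plan is to imitate the construction in Lemma \ref{Affineext} and define
$$f_c(x + iy) \;=\; u_n^* \, f(c(x,y)) \, u_n, \qquad x+iy \in (K_c)_n,$$
then check the defining conditions of a bounded nc function one at a time. The only real obstacle I foresee is showing that the value of $f_c$ actually lies in $(L_c)_n$; for an arbitrary $T \in L_{2n}$ the compression $u_n^* T u_n$ is just an element of $M_n(E_c)$, and need not be the `$a+ib$' part of any $c(a,b) \in L_{2n}$.

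To overcome this I would exploit the symmetry $c(x,y) = W c(x,y) W^\tran$, where $W \in M_{2n}(\bR)$ is the real unitary with upper-right block $-I_n$ and lower-left block $I_n$. Applying the real nc function property (3) for $f$ at the unitary $W$ gives $f(c(x,y)) = W f(c(x,y)) W^\tran$, and a short $2 \times 2$ block computation forces $f(c(x,y))$ to have the form $c(a,b)$ for some $a,b \in M_n(E)$. Since $c(a,b) = f(c(x,y))$ lies in $L_{2n}$, we obtain $a+ib \in (L_c)_n$, and $u_n^* f(c(x,y)) u_n = a + ib$. With $f_c$ now a bona fide map $K_c \to L_c$, boundedness and gradedness are immediate. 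For the complex nc convex combination property, I would pass through the multiplicativity $c(\alpha \gamma) = c(\alpha) c(\gamma)$, the relation $c(\alpha)^\tran = c(\alpha^*)$, and the intertwining $c(\alpha) u_{n_i} = u_n \alpha$ and its adjoint (all established in the proofs of Theorem \ref{ncrealaffinecoml} and Lemma \ref{Affineext}), reducing the question to the real property (2) of $f$ applied to the family of real isometries $\{c(\alpha_i)\}$, which satisfies $\sum c(\alpha_i) c(\alpha_i)^\tran = c(1_n) = 1_{2n}$. Complex unitary equivariance is then the single-term case of this. If $f$ is strongly continuous at each level, so is $f_c$, being a composition of $x+iy \mapsto c(x,y)$, $f$, and conjugation by $u_n$.

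For uniqueness, any complex bounded nc extension $g : K_c \to L_c$ satisfies $g(c(x,y)) = f(c(x,y))$ since $c(x,y) \in \iota(K_{2n})$; applying to this the complex compression-by-isometry rule (a special case of the complex analogue of property (2)) with the complex isometry $u_n$ yields
$$g(x+iy) \;=\; g(u_n^* c(x,y) u_n) \;=\; u_n^* g(c(x,y)) u_n \;=\; u_n^* f(c(x,y)) u_n \;=\; f_c(x+iy).$$
For the second assertion, where $L$ itself is already a complex nc convex set, the same formula $f_c(x+iy) = u_n^* f(c(x,y)) u_n$ furnishes the extension to $L$ directly: the value now lies automatically in $L_n$ because $L$ is closed under compression by the complex isometry $u_n$, so the $W$-invariance argument above is not needed.
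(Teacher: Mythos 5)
Your construction of $f_c$ and the verification of the direct--sum property via the real isometries $c(\alpha_i)$ match the paper's proof, and your $W$--invariance argument showing that $f(c(x,y))$ has the block form $c(a,b)$, so that $u_n^*f(c(x,y))u_n$ really lies in $(L_c)_n$, is a nice point that the paper passes over quickly. The problem is your uniqueness argument. You invoke a ``complex compression-by-isometry rule'' for the extension $g$, claiming it is a special case of property (2) of nc functions, and use it to write $g(u_n^* c(x,y)u_n)=u_n^* g(c(x,y))u_n$. This is not a valid property of nc functions: property (2) only covers families of isometries $\{\alpha_i\}$ with $\sum\alpha_i\alpha_i^*=1$ (a direct-sum decomposition), and property (3) only covers unitaries. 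Compression by a single non-unitary isometry such as $u_n\in M_{2n,n}(\bC)$ is exactly what general nc functions fail to respect --- e.g.\ a product $ab$ of two nc affine functions satisfies $a(\beta^*x\beta)b(\beta^*x\beta)=\beta^*a(x)\beta\beta^*b(x)\beta\neq\beta^*a(x)b(x)\beta$ when $\beta\beta^*\neq 1$. Indeed, compression equivariance is precisely the property that singles out the (bounded) nc \emph{affine} functions inside $B(K)$ (see the corollary identifying $A(K)^{**}$), and the paper explicitly warns, in the proof of Lemma \ref{BoundedCommC}, that the compression argument used for affine maps ``does not work here.'' So as written your uniqueness step would prove too much and is unjustified; only the affine case (Lemma \ref{Affineext}) can argue this way.

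The gap is repairable, and the paper's proof shows how: instead of compressing by $u_n$, conjugate by the complex \emph{unitary}
\[
U=\frac{1}{\sqrt2}\begin{bmatrix}1_n & i\cdot 1_n\\ i\cdot 1_n & 1_n\end{bmatrix},
\]
which satisfies $U\,c(x,y)\,U^* = (x+iy)\oplus(x-iy)$. Any complex bounded nc extension $g$ then gives
\[
g(x+iy)\oplus g(x-iy)=g\bigl(U\,c(x,y)\,U^*\bigr)=U\,g(c(x,y))\,U^*=U\,f(c(x,y))\,U^*,
\]
using only the direct-sum property and unitary equivariance of $g$ together with $g=f$ on $\iota(K)$; comparing the top-left $n\times n$ corner forces $g(x+iy)=u_n^*f(c(x,y))u_n=f_c(x+iy)$. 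With that substitution your argument is complete; the second assertion (when $L$ is already complex nc convex) is fine as you state it, since nc convex \emph{sets}, unlike nc functions, are closed under compressions by isometries.
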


            \begin{proof} For $x+iy \in (K_c)_n$ define 
            \[f_c(x + iy) = \frac{1}{\sqrt{2}} \begin{bmatrix}
                    1_n & i \cdot 1_n
                \end{bmatrix}
                \, f(c(x+iy)) \, 
                \frac{1}{\sqrt{2}} \begin{bmatrix}
                1_n \\ -i \cdot 1_n
            \end{bmatrix}\] 
            This function is a complex nc function by a proof similar to Theorem \ref{AffineCommC}. First, it is clearly graded. Then, for $\beta \in M_n(\bC)$ a unitary and $x+iy \in (K_c)_n$ we have
            \begin{align*}
                f_c(\beta(x+iy)\beta^*) &= \frac{1}{2} \begin{bmatrix}
                    1_n & i \cdot 1_n
                \end{bmatrix}
                \, f(c(\beta)c(x+iy)c(\beta)^{\tran}) \, 
            \begin{bmatrix}
                1_n \\ -i \cdot 1_n
            \end{bmatrix} 
            \\&= \frac{1}{2} \begin{bmatrix}
                    1_n & i \cdot 1_n
                \end{bmatrix}
                \, c(\beta)f(c(x+iy))c(\beta)^{\tran} \, 
            \begin{bmatrix}
                1_n \\ -i \cdot 1_n
            \end{bmatrix}
            \\&= \beta \frac{1}{2} \begin{bmatrix}
                    1_n & i \cdot 1_n
                \end{bmatrix}
                \, f(c(x+iy)) \, \begin{bmatrix}
                1_n \\ -i \cdot 1_n
            \end{bmatrix} \beta^{*}
            \\&= \beta f_c(x+iy)\beta^{*}
            \end{align*}
            which shows $(3)$. A similar proof shows $(2)$. In the case that $f$ is bounded, then $f_c$ will be bounded by Ruan's first axiom. If $f$ is SOT continuous then $f_c$ will be too because the function $c$ is SOT continuous and so is adjoining the matrix $[1_n; -i \cdot 1_n]$.

            This will be the unique extension because for any complex nc function on $K_c$ extending $f$, say $g$, we have that
            \[(x+iy) \oplus (x-iy) = \frac{1}{2} \begin{bmatrix}
                1_n & i \cdot 1_n \\
                i \cdot 1_n & 1_n
            \end{bmatrix}c(x+iy) \begin{bmatrix}
                1_n & -i \cdot 1_n \\ -i \cdot 1_n & 1_n
            \end{bmatrix}\]
            so
            \begin{align*}
                g(x+iy) \oplus g(x-iy)
            &=g\Big(\frac{1}{2} \begin{bmatrix}
                1_n & i \cdot 1_n \\
                i \cdot 1_n & 1_n
            \end{bmatrix}c(x+iy) \begin{bmatrix}
                1_n & -i \cdot 1_n \\ -i \cdot 1_n & 1_n
            \end{bmatrix}\Big)
            \\&= \frac{1}{2} \begin{bmatrix}
                1_n & i \cdot 1_n \\
                i \cdot 1_n & 1_n
            \end{bmatrix}f(c(x+iy)) \begin{bmatrix}
                1_n & -i \cdot 1_n \\ -i \cdot 1_n & 1_n
            \end{bmatrix} .
            \end{align*}
            Here we first used that $g(x \oplus y) = g(x) \oplus g(y)$. The last part of the above equation is a $2n \times 2n$ matrix with top-right and bottom-left corners being $0$. 
                        Comparing the top-left corners of the matrices in the above equation gives
            \[g(x+iy) = \frac{1}{\sqrt{2}} \begin{bmatrix}
                    1_n & i \cdot 1_n
                \end{bmatrix}
                \, f(c(x+iy)) \, 
                \frac{1}{\sqrt{2}} \begin{bmatrix}
                1_n \\ -i \cdot 1_n 
            \end{bmatrix}\]
          as desired.   \end{proof} 
           
      \begin{lemma}\label{BoundedCommC} We have $B(K)$ is a real $W^*$-algebra, and 
             $B(K_c) \cong B(K)_c$ as complex von Neumann algebras.
            \end{lemma}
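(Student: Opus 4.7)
The plan is to reduce both assertions to the complex case of Theorem~4.3.3 of \cite{DK}. Applied to the compact nc convex set $K_c$, that result already furnishes that $B(K_c)$ is a complex von Neumann algebra. I will construct an explicit complex $*$-algebra isomorphism $\Psi : B(K)_c \to B(K_c)$; transporting the vN structure through $\Psi$ then gives that $B(K)_c$ is complex $W^*$, and hence $B(K)$ is a real $W^*$-algebra as the fixed points of the canonical period-2 conjugate linear $*$-automorphism of its complexification.

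For the definition of $\Psi$, given $f + ig \in B(K)_c$, regard $F := f + ig$ as a real bounded nc function from $K$ into $\cM(\bC) = \cM(\bR)_c$ (direct sums and real-unitary equivariance for $F$ follow coordinatewise from those of $f$ and $g$). The construction and uniqueness portion of Lemma~\ref{bext} applies verbatim with $L_c$ replaced by $\cM(\bC)$, producing a unique complex bounded nc extension $F_c : K_c \to \cM(\bC)$ satisfying $F_c \circ \iota = F$, given by $F_c(x+iy) = u_n^* F(c(x+iy)) u_n$. Set $\Psi(f+ig) := F_c$. The candidate inverse $\Gamma : B(K_c) \to B(K)_c$ is defined by $\Gamma(h) = \text{Re}(h \circ \iota) + i\, \text{Im}(h \circ \iota)$, with $\text{Re}$ and $\text{Im}$ acting entrywise; each summand is a real bounded nc function $K \to \cM(\bR)$ by an argument parallel to Lemma~\ref{reisAffine}. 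The relation $F_c \circ \iota = F$ gives $\Gamma \circ \Psi = \mathrm{id}$, and $\Psi \circ \Gamma = \mathrm{id}$ follows because $h$ and $\Psi(\Gamma(h))$ are both complex bounded nc extensions of the common function $h \circ \iota$, hence equal by uniqueness.

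The key algebraic properties of $\Psi$ are easiest to check from the side of $\Gamma$. Because the product, adjoint, and scalar multiplication of $B(K_c)$ are all pointwise in $M_n(\bC)$, restriction to $\iota(K)$ is a $*$-algebra homomorphism into the complex-valued real nc functions on $K$; splitting into real and imaginary parts identifies the latter with $B(K)_c$. The product and adjoint of $B(K)_c$ then unpack exactly to pointwise complex matrix multiplication and conjugate transpose, using that the involution in $B(K)$ is the entrywise transpose, so that $(f+ig)^* = f^\tran - i g^\tran$ in $B(K)_c$. Complex linearity is immediate. Hence $\Gamma$, and therefore $\Psi$, is a complex $*$-isomorphism; it is automatically isometric between the unital complex $C^*$-algebras $B(K)_c$ and $B(K_c)$. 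Invoking the complex case of Theorem~4.3.3 of \cite{DK} applied to $K_c$ then finishes the proof.

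The main subtle point is that multiplicativity of $\Psi$ is not visible from the compression formula $F_c(x+iy) = u_n^* F(c(x+iy)) u_n$, because $u_n u_n^*$ is a proper projection in $M_{2n}(\bC)$ rather than the identity; direct verification from that side would require the block-diagonal decomposition $c(x,y) \sim (x+iy) \oplus (x-iy)$ in the complex orthonormal basis $\{u_n, \bar u_n\}$, together with the fact that a complex nc function splits on direct sums. Working through $\Gamma$ bypasses this by making the product structure manifestly pointwise.
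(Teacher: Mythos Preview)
Your proof is correct and follows essentially the same route as the paper: both construct the same maps $\Psi$ and $\Gamma$, both invoke the complex case of Theorem~4.3.3 of \cite{DK} applied to $K_c$ to get that $B(K_c)$ is a $W^*$-algebra, and both deduce that $B(K)$ is real $W^*$ as the fixed points of the period-2 conjugate linear automorphism of $B(K)_c$. The only differences are in bookkeeping: the paper establishes that $\Gamma$ is a $*$-isomorphism by showing it is a unital complete order isomorphism (mirroring Theorem~\ref{AffineCommC}), whereas you verify multiplicativity and $*$-preservation directly; and the paper handles $\Psi\circ\Gamma = \mathrm{Id}$ by the unitary-conjugation trick from the end of Lemma~\ref{bext}, whereas you invoke the uniqueness clause of that lemma, which is slightly cleaner.
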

            \begin{proof}
                We use the same maps as in Theorem \ref{AffineCommC}, where we have a map
                \[\Gamma: B(K_c) \to B(K)_c\]
                \[\omega \mapsto \re \circ \omega_{|K} + i \, \im \circ \omega_{|K} \]
                with inverse $\Psi$. To see that $\Gamma$ is well-defined note
                \[||\re(\omega(x+i0))|| \leq ||\omega(x)|| \leq ||\omega|| \qquad x \in K , \]
                and similarly for the imaginary part. Therefore, 
                \[||c(\re \omega + i \im \omega)|| \leq ||\re \omega|| + ||\im \omega|| \leq 2 ||\omega|| . \]
                A similar proof to that of  Lemma \ref{reisAffine} shows that ${\rm Re}$ and $\im$ are unitarily equivariant. Also  $\Gamma$ is a unital complete order isomorphism as may be shown similarly to the proof of Theorem \ref{AffineCommC},
                 and therefore is a $*$-isomorphism. 
                 It follows that $B(K)_c$ is a $W^*$-algebra, and $\Gamma$ is automatically normal.
                 The canonical period 2 real $*$-automorphism $\theta$ on $B(K)_c$ is weak* continuous, and so 
                 its fixed point algebra, hence $B(K)$, is a real $W^*$-algebra. Therefore $B(K_c) \cong B(K)_c$ via $\Gamma$. Note that the proof  in Theorem \ref{AffineCommC} 
                 that the inverse of $\Gamma$ is $\Psi$ does not quite work here because in showing $\Psi \circ \Gamma = \Id$ we used compressions.
                 However we can adjust that proof to use unitaries as we did in the proof of Lemma \ref{bext} to make it work.  
                 Namely, for $\omega \in B(K_c)$ and $x+iy \in (K_c)_n$ we have
                 $\omega(x+iy) =   u_n^*\omega( c(x+iy))  u_n$ exactly as in the proof in the end of Lemma \ref{bext}.   It follows that 
                \begin{align*}
                    \omega(x+iy)
                &=   u_n^*\omega( c(x+iy))  u_n
                \\&= u_n^* [(\re \circ \omega)(c(x+iy))+ i (\im \circ \omega)(c(x+iy))] u_n
                \\&= \Psi((\re \circ \omega)+ i (\im \circ \omega))(x+iy) = \Psi(\Gamma (\omega))(x+iy).
                \end{align*} 
                 That $\Gamma \circ \Psi = \Id$ is exactly as in the proof in the end of Theorem \ref{AffineCommC}.    
            \end{proof}
                \end{subsection}

        \begin{prop}\label{CCommC}
            $C(K_c) \cong C(K)_c$ as complex $C^*$-algebras.
        \end{prop}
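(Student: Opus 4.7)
The plan is to exploit the isomorphism $\Gamma : B(K_c) \to B(K)_c$ from Lemma \ref{BoundedCommC} and show that it carries $C(K_c)$ onto $C(K)_c$. Since $C(K)$ is by definition the $C^*$-subalgebra of $B(K)$ generated by $A(K)$, and since complexification of $C^*$-algebras commutes with the operation of generating a $C^*$-subalgebra (the complexification $C(K)_c \subseteq B(K)_c$ is the closure of $C(K) + i C(K)$, which is the $C^*$-algebra generated in $B(K)_c$ by $A(K) + i A(K) = A(K)_c$), the isomorphism will follow once we confirm that $\Gamma$ restricts to the identification of $A(K_c)$ with $A(K)_c$ established in Theorem \ref{AffineCommC}.

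First I would verify this compatibility. The map $\Gamma$ of Lemma \ref{BoundedCommC} is defined on $\omega \in B(K_c)$ by $\omega \mapsto \re \circ \omega_{|K} + i \, \im \circ \omega_{|K}$, which is exactly the formula for the map $\Gamma$ in Theorem \ref{AffineCommC} restricted to nc affine functions. Since real (resp.\ complex) nc affine functions are the nc affine elements of $B(K)$ (resp.\ $B(K_c)$), $\Gamma$ restricts to a unital complete order isomorphism $A(K_c) \to A(K)_c$.

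Next I would argue the $C^*$-algebra equality. Because $\Gamma$ is a $*$-isomorphism of $C^*$-algebras, it maps the $C^*$-subalgebra of $B(K_c)$ generated by $A(K_c)$, namely $C(K_c)$, onto the $C^*$-subalgebra of $B(K)_c$ generated by $\Gamma(A(K_c)) = A(K)_c$. The latter contains $A(K)$ and $iA(K)$, hence $C(K)$ and $iC(K)$, hence $C(K) + iC(K)$, which is norm dense in $C(K)_c$; conversely every element of $A(K)_c$ lies in $C(K)_c$, and $C(K)_c$ is already a $C^*$-subalgebra of $B(K)_c$. Thus the $C^*$-algebra generated by $A(K)_c$ in $B(K)_c$ is precisely $C(K)_c$, and so $\Gamma(C(K_c)) = C(K)_c$.

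The only real content is the verification that the restriction of $\Gamma$ to $A(K_c)$ coincides with the affine-level isomorphism of Theorem \ref{AffineCommC}; everything else is the standard fact that $*$-isomorphisms transport generated $C^*$-subalgebras and that complexification commutes with forming the $C^*$-algebra generated by a real selfadjoint subspace. No obstacle of substance is expected, since both Theorem \ref{AffineCommC} and Lemma \ref{BoundedCommC} use the same explicit formulas for $\Gamma$ and $\Psi$.
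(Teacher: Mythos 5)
Your proposal is correct and follows essentially the same route as the paper, which writes $C(K_c) = C^*(A(K_c)) \cong C^*(A(K)_c) \cong C^*(A(K))_c = C(K)_c$ using the identifications $B(K_c)\cong B(K)_c$, $A(K_c)\cong A(K)_c$, and the fact that $C^*(A_c)=C^*(A)+iC^*(A)$ inside $B_c$ for a subsystem $A$ of a real $C^*$-algebra $B$. Your extra verification that $\Gamma$ restricts to the affine-level isomorphism is just a more explicit rendering of what the paper uses implicitly.
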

        \begin{proof} We have $$C(K_c) 
                = C^*(A(K_c)) \cong C^*(A(K)_c)
                \cong C^*(A(K))_c = C(K)_c.$$
               We used the fact that 
               if $A$ is a subsystem 
               of a real $C^*$-algebra $B$ then 
               $C^*(A_c) = C^*(A) + i C^*(A)$ in $B_c$.
        \end{proof}

        \begin{lemma}\label{PStrongCommC}
            Let $S(K)$ be the point-strong continuous functions in $B(K)$. Then $S(K_c) \cong S(K)_c$ as $C^*$-algebras coming from the congruence $B(K_c) \cong B(K)_c$.
        \end{lemma}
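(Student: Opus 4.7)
The plan is to show that the $*$-isomorphism $\Gamma: B(K_c) \to B(K)_c$ constructed in Lemma \ref{BoundedCommC} (and its inverse $\Psi$) restrict to a $*$-isomorphism between the $C^*$-subalgebras $S(K_c)$ and $S(K)_c$. Once this restriction is shown to land correctly in both directions, the statement follows, since $\Gamma$ is already known to be a $*$-isomorphism.

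\textbf{Forward direction.} First I would take $\omega \in S(K_c)$ and verify that $\Gamma(\omega) = \re \circ \omega_{|K} + i\, \im \circ \omega_{|K}$ is point-strong continuous on $K$. Since the embedding $\iota: K \hookrightarrow K_c$ from Theorem \ref{ncrealaffinecoml} is continuous for the weak$^*$ topologies at each level, the restriction $\omega \circ \iota: K \to \cM(\bC)$ is again point-strong continuous. Then I would observe that $\re, \im : \cM(\bC) \to \cM(\bR)$ are completely contractive and real-linear (Lemma \ref{reisAffine}), and in particular their amplifications are norm-continuous, hence strong-to-strong continuous on each $M_n(\bC) \to M_n(\bR)$. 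Composing yields point-strong continuity of $\re \circ \omega_{|K}$ and $\im \circ \omega_{|K}$, so $\Gamma(\omega) \in S(K)_c$.

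\textbf{Reverse direction.} Next I would take $f + ig \in S(K)_c$ and verify that $\Psi(f+ig)$ is point-strong continuous on $K_c$. Using the formula
\[ \Psi(f+ig)(x+iy) \;=\; u_n^*\bigl( f(c(x+iy)) + i\, g(c(x+iy)) \bigr) u_n, \qquad x+iy \in (K_c)_n, \]
if $x_\alpha + i y_\alpha \to x + iy$ weak$^*$ in $(K_c)_n$, then by Remark 2 following Theorem \ref{ncrealaffinecoml} the map $c: K_c \to K$ is weak$^*$ bicontinuous, so $c(x_\alpha + iy_\alpha) \to c(x+iy)$ weak$^*$ in $K_{2n}$. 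Point-strong continuity of $f$ and $g$ then gives strong convergence of $f(c(x_\alpha + iy_\alpha))$ and $g(c(x_\alpha + iy_\alpha))$. The net is uniformly bounded in norm since $f, g \in B(K)$, and left/right multiplication by the fixed bounded operators $u_n^*$ and $u_n$ is jointly strongly continuous on norm-bounded sets, so $\Psi(f+ig)(x_\alpha + iy_\alpha) \to \Psi(f+ig)(x+iy)$ strongly. Hence $\Psi(f+ig) \in S(K_c)$.

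\textbf{Main obstacle.} The delicate point is handling the strong operator topology at infinite cardinals $n$, where $M_n(\bF) \cong B(\ell^2_n)$ and multiplication is not jointly SOT-continuous in general. The saving grace is the uniform norm bound on $\Psi(f+ig)(x+iy)$ inherited from $f, g \in B(K)$, which localizes the multiplication step to a bounded set where joint SOT-continuity of left/right multiplication by a fixed bounded operator does hold. With this observation the argument reduces to the same kind of verification used in Lemma \ref{BoundedCommC} and Theorem \ref{AffineCommC}; the remainder is bookkeeping on topologies.
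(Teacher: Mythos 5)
Your proposal is correct and takes essentially the same route as the paper: show that $\Gamma$ carries $S(K_c)$ into $S(K)_c$ using strong-operator continuity of $\re$ and $\im$ applied to $\omega\circ\iota$, and that $\Psi$ carries $S(K)_c$ into $S(K_c)$ via the formula $u_n^*\bigl(f(c(x+iy))+i\,g(c(x+iy))\bigr)u_n$ together with weak$^*$-continuity of $c$. Two small corrections to your justifications: norm (or complete) contractivity does not in general imply strong-to-strong continuity (the transpose map is a norm isometry that is not SOT-continuous), so one should instead note that $\re$ and $\im$ are SOT-continuous because entrywise conjugation is implemented by an isometric conjugate-linear involution of $\ell^2_n(\bC)$; and in the reverse direction no uniform norm bound is needed, since only one factor varies and left/right multiplication by the fixed operators $u_n^*$, $u_n$ is SOT-continuous without any boundedness hypothesis.
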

        \begin{proof}
            We just need that the map $\Gamma$ in \ref{BoundedCommC} satisfies
            $\Gamma(S(K_c)) =  S(K)_c$. Let $\omega \in S(K_c)$ and $x_\lambda \xrightarrow[]{}x \in K_n$.
            Because $\re$ is a contraction $\re(\omega(x_\lambda+i0)) \to \re(\omega(x+i0))$  in the strong operator topology. Similarly for $\text{Im}$. Therefore, 
            \[c(\re \omega + i \im \omega)(x_\lambda) \xrightarrow[]{\text{SOT}}c(\re \omega + i \im \omega)(x) .\]
            Thus $\Gamma(\omega) \in S(K)_c$.  The converse is similar using the map $\Psi$ defined above Theorem \ref{AffineCommC}. 
        \end{proof}

The following is the real case of \cite[Theorem 4.3.3]{DK}.

        \begin{theorem}      Let $K$ be a real compact nc convex set. Then the map $\sigma: C^*_{\rm{max}}(A(K))^{**} \to B(K)$ is a real linear normal $*$-isomorphism, which restricts to a $*$-isomorphism from $C^*_{\rm{max}}(A(K))$ onto $C(K)$. The elements of $C(K)$ are the point-strong  continuous nc functions on $K$. Also, $\sigma \circ \iota$ is the identity map on $A(K)$.
               \end{theorem}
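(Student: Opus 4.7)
The plan is to bootstrap from the complex case of Theorem $4.3.3$ of \cite{DK} applied to the complexification $K_c$, transporting the resulting isomorphism along the structural identifications already established in the preceding lemmas.  Since $K_c$ is a complex compact nc convex set, the Davidson–Kennedy theorem supplies a normal $*$-isomorphism
$$\sigma_c : C^*_{\rm max, \bC}(A(K_c))^{**} \longrightarrow B(K_c)$$
given by $\sigma_c(b)(z) = \delta_z^{**}(b)$ for $z \in K_c$, restricting to a $*$-isomorphism $C^*_{\rm max, \bC}(A(K_c)) \to C(K_c)$ whose range consists of the point-strong continuous nc functions, and with $\sigma_c \circ \iota = \Id$ on $A(K_c)$.

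The first step is to rewrite the domain and codomain of $\sigma_c$ in terms of complexifications of the real objects.  By Theorem \ref{AffineCommC} we have $A(K_c) \cong A(K)_c$ as complex operator systems; by Lemma \ref{MaxCCommC} (the real–complex commutativity of $C^*_{\rm max}$) we then get $C^*_{\rm max, \bC}(A(K_c)) \cong C^*_{\rm max}(A(K))_c$ as complex $C^*$-algebras.  Taking second duals (which commutes with complexification for real $W^*$-algebras) gives $C^*_{\rm max, \bC}(A(K_c))^{**} \cong (C^*_{\rm max}(A(K))^{**})_c$.  On the other side, Lemma \ref{BoundedCommC} gives $B(K_c) \cong B(K)_c$ as complex $W^*$-algebras.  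Under these identifications $\sigma_c$ becomes a complex linear normal $*$-isomorphism
$$\tilde\sigma_c : (C^*_{\rm max}(A(K))^{**})_c \longrightarrow B(K)_c.$$

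The key step is then to show that $\tilde\sigma_c$ intertwines the canonical period $2$ real $*$-automorphisms $\theta$ on each side that come from the complexification, since the fixed point algebras are, respectively, $C^*_{\rm max}(A(K))^{**}$ and $B(K)$ (using the description of $B(K)$ as the fixed points of $\theta$ on $B(K)_c$ from the proof of Lemma \ref{BoundedCommC}).  To verify the intertwining it suffices to check agreement on a generating set.  For $f \in A(K) \subset A(K)_c = A(K_c)$ and $x \in K$ embedded as $x + i0 \in K_c$, both definitions unwind to $\sigma_c(f)(\iota(x)) = \delta_{\iota(x)}(f) = f_c(x+i0) = f(x)$, which is a real element, so $\theta$-invariance holds on $A(K)$.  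Since $A(K)$ generates $C^*_{\rm max}(A(K))$ as a $C^*$-algebra, and $\theta$ is a $*$-homomorphism, the intertwining extends to $C^*_{\rm max}(A(K))$; by normality of $\theta$ and weak$^*$ density it extends to $C^*_{\rm max}(A(K))^{**}$.  Restricting $\tilde\sigma_c$ to the fixed point algebras therefore yields a real linear normal $*$-isomorphism $\sigma : C^*_{\rm max}(A(K))^{**} \to B(K)$, and the computation above shows $\sigma$ agrees with the $\sigma$ defined in the statement (since both are normal $*$-homomorphisms agreeing on $A(K)$, which generates the domain).  The identity $\sigma \circ \iota = \Id_{A(K)}$ is built into this computation.

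Finally, restricting the restricted isomorphism $C^*_{\rm max, \bC}(A(K_c)) \cong C(K_c)$ to the real fixed points and using Proposition \ref{CCommC} ($C(K_c) \cong C(K)_c$) yields the $*$-isomorphism $C^*_{\rm max}(A(K)) \cong C(K)$.  The identification of $C(K)$ with the point-strong continuous nc functions is then inherited from the complex case together with Lemma \ref{PStrongCommC}, which says that the point-strong continuous complex nc functions on $K_c$ correspond under the isomorphism $B(K_c) \cong B(K)_c$ exactly to $S(K)_c$, so that their real fixed points are the point-strong continuous real nc functions on $K$.  The main obstacle in this plan is the bookkeeping in the third step: one must verify that the composite of several canonical isomorphisms (for $A$, $C^*_{\rm max}$, $(\cdot)^{**}$, and $B$) genuinely intertwines the complexification involutions rather than doing so only up to a nontrivial automorphism.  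This is handled by checking on the generating subsystem $A(K)$, where everything is transparent.
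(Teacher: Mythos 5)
Your proposal is correct and follows essentially the same route as the paper: complexify, apply the complex Theorem 4.3.3 of \cite{DK} to $K_c$, transport along the identifications $A(K_c)\cong A(K)_c$, $C^*_{\rm max}(A(K)_c)\cong C^*_{\rm max}(A(K))_c$, $(\cA_c)^{**}\cong(\cA^{**})_c$, $B(K_c)\cong B(K)_c$, $C(K_c)\cong C(K)_c$ and $S(K_c)\cong S(K)_c$, and pin everything down by checking on the generating copy of $A(K)$. The only (harmless) difference is the bookkeeping device for the final restriction step: you verify that the composite isomorphism intertwines the canonical period-2 conjugations and pass to fixed-point algebras, whereas the paper does a direct diagram chase showing the composite is the identity on the copies of $A(K)$ and then deduces surjectivity onto $B(K)$ from $D+iD=B(K)_c$.
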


        \begin{proof}
            For $\cA$ a real $C^*$-algebra, we have $(\cA_c)^{*} \cong (\cA^{*})_c$ by \cite{RComp}. 
            The ensuing map 
            $(\cA_c)^{**} \to (\cA^{**})_c$ is a unital 
            complete order isomorphism and normal $*$-isomorphism of complex von Neumann algebras \cite{Li}.        Moreover, 
            \begin{align*}
                (C^*_{\rm{max},\bR}(A(K))^{**})_c
                &\cong (C^*_{\rm{max},\bR}(A(K))_c)^{**}
                \\&\cong C^*_{\rm{max},\bC}(A(K)_c)^{**}
                \\&\cong C^*_{\rm{max},\bC}(A(K_c))^{**}
            \end{align*} 
 Using this, Lemma \ref{MaxCCommC} and  Theorem \ref{AffineCommC} give
            The complex case of this theorem and Lemma \ref{BoundedCommC} gives
            \[C^*_{\rm{max},\bC}(A(K_c))^{**} \cong B(K_c) \cong B(K)_c , \] with the composition 
            $C^*_{\rm{max},\bC}(A(K))^{**} \cong B(K)_c \cong B(K_c)$ being via a (complex) normal $*$-isomorphism $\pi$ say. 
            There is a real embedding of $C^*_{\rm{max},\bR}(A(K))^{**}$ into $(C^*_{\rm{max},\bR}(A(K))^{**})_c$, and similarly for $B(K)$ into $B(K)_c$. A diagram chase shows that the restriction of the complex normal $*$-isomorphism above is a real  normal $*$-isomorphism  $C^*_{\rm{max},\bR}(A(K))^{**} \cong B(K)$, which is the `identity map' on  the copies of $A(K)$.  From this we see again that $B(K)$ is a von Neumann algebra.  
            
            To do the diagram chase, take $f \in A(K)$. This  embeds into $C^*_{\rm{max}, \bR}(A(K))^{**}$ and is denoted by $\theta(f)$. Going `up' in the diagram gives an element in $(C^*_{\rm{max}, \bR}(A(K))^{**})_c$, namely $\theta(f)+i0$. The isomorphism $(C^*_{\rm{max},\bR}(A(K))^{**})_c \cong C^*_{\rm{max},\bC}(A(K)_c)^{**}$ will take $\theta(f)+i0$ to $\theta(f+i0)$. The third congruence in the above centered equations takes our element to $\theta(\psi(f+i0))$. Taking $\sigma$ of this element and evaluating at $x+i0 \in K_c$ for $x \in K_n$ will give an element of $M_n(\bC)^{**}$. We evaluate this functional at $A \in M_n(\bC)^*$ and get
            \begin{align*}
            \sigma(\theta(\psi(f+i0)))(x+i0)(A)
                &= \delta_{x+i0}^{**}(\theta(\psi(f+i0)))(A)
                \\&= A(\delta_{x+i0}(\psi(f+i0)))
                \\&= A(\psi(f+i0)(x+i0))
                \\&= A(f(x))
            \end{align*}
            On the other hand, 
            \[\sigma(\theta(f))(x)(A) = \delta_x^{**}(\theta(f))(A) = A(f(x))\]
            Therefore, the diagram chase shows that it is the
            `identity map' on  the copies of $A(K)$ that 
            extends to $\pi$. 
            Since $\pi(A(K))$ is the copy of $A(K)$  in $B(K)$ inside $B(K_c)$, it 
            follows that $\pi(C^*_{\rm{max}}(A(K))^{**})$ is a $C^*$-subalgebra $D$ of $B(K)$ with $D + i D = B(K)_c$.
            Hence $D = B(K) = \pi(C^*_{\rm{max}}(A(K))^{**})$.
            Also $I_{A(K)}$ extends to a $*$-isomorphism between the $C^*$-algebra generated by $A(K)$ in both sets, so
            $C^*_{\rm{max}}(A(K)) \cong C(K)$. It also shows that $\sigma \circ \iota$ is the identity map on $A(K)$. 
                        That $C(K)$ are the point-strong  continuous nc functions follows from the complex case and Lemma \ref{PStrongCommC}.  Indeed  $C(K) = B(K) \cap C(K)_c = B(K) \cap C(K_c)$.
         \end{proof}

         As we saw after Lemma \ref{MaxCCommC}, any element $k$ of $K$ defines a nc/matrix state on $C(K)$, and    a    weak* continuous 
         matrix state on $B(K)$.
         In particular, $f \mapsto f(k)$ is weak* continuous on $B(K)$.

        \begin{cor}
            Let $K$ be a real compact nc convex set. The real enveloping von Neumann algebra $C(K)^{**}$ of $C(K)$ is $*$-isomorphic to the real von Neumann  algebra $B(K)$ of bounded nc functions on $K$. 
        The real dual operator system $A(K)^{**}$ is completely order isomorphic to 
                the real operator system of bounded nc affine functions on $K$.  The latter space has as
        complexification the bounded complex nc affine functions on $A_c$.
        \end{cor}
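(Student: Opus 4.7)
\smallskip

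\noindent \textbf{Proof proposal.}  The first assertion should fall out of the preceding theorem by taking biduals. That theorem gives, as part of the statement, a $*$-isomorphism $C^*_{\rm max}(A(K)) \cong C(K)$ obtained by restricting $\sigma$. Taking biduals of this $*$-isomorphism of real $C^*$-algebras yields a normal $*$-isomorphism $C^*_{\rm max}(A(K))^{**} \cong C(K)^{**}$, and composing with $\sigma^{-1}$ produces $C(K)^{**} \cong B(K)$.

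For the second assertion, the inclusion $A(K) \hookrightarrow C^*_{\rm max}(A(K))$ is a ucoe by the universal property of $C^*_{\rm max}$, so taking biduals produces a weak$^*$-homeomorphic unital complete order embedding $A(K)^{**} \hookrightarrow C^*_{\rm max}(A(K))^{**}$. Composing with $\sigma$ identifies $A(K)^{**}$ with a weak$^*$-closed operator subsystem $D$ of $B(K)$. The plan is to show that $D$ equals the space of bounded nc affine functions. One containment is short: $\sigma \circ \iota$ is the identity on $A(K)$, so $D$ contains $A(K)$ itself, and the remark right after the theorem notes that evaluation at each $k \in K$ is weak$^*$-continuous on $B(K)$; this makes the direct-sum and compression conditions (2), (3) in the definition of nc affine functions weak$^*$-closed properties on $B(K)$, hence $D$ is contained in the bounded nc affine functions.

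The reverse containment is the main obstacle, and the plan is to obtain it by complexification simultaneously with the third assertion. One shows $A(K)^{**}_c \cong (A(K)_c)^{**}$ as complex dual operator systems, by the same argument used in the proof of the theorem above for real $C^*$-algebras (the cited results of \cite{RComp, Li} giving compatibility of bidual with complexification for real operator systems as well, since the operator system structure on the bidual is the one inherited from the dual operator system structure). By Theorem \ref{AffineCommC} we have $(A(K)_c)^{**} \cong A(K_c)^{**}$, and by the complex case of the corollary (due to Davidson--Kennedy) this is completely order isomorphic to the complex bounded nc affine functions on $K_c$. This yields the third assertion. For the remaining containment in (2), given a bounded real nc affine $f$ on $K$, Lemma \ref{bext} extends it to a complex bounded nc affine $f_c$ on $K_c$, placing it inside $A(K)^{**}_c$; since $\theta_K$ fixes $\iota(K)$ pointwise, $f_c$ is fixed by the period $2$ conjugate linear automorphism of $A(K)^{**}_c$ induced by $\theta_K$, whose fixed-point real subsystem is $A(K)^{**}$. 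Hence $f \in A(K)^{**}$, completing the identification.

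The delicate points are the passage from $(A(K)_c)^{**}$ to $A(K)^{**}_c$ (functoriality of the $**$ with complexification in the dual operator system category) and making sure that the isomorphisms of each stage are unital and complete order, not merely isometric or Banach-space; both should go through by the same bookkeeping used in the theorem's proof, in particular by diagram-chasing the copies of $A(K)$ through each identification as done there.
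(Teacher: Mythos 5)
Your proposal is correct and takes essentially the paper's route: the first assertion by passing to biduals of the theorem's isomorphisms, and the identification of $A(K)^{**}$ with the bounded nc affine functions by combining a weak$^*$-approximation argument (using weak$^*$-continuity of the point evaluations on $B(K)$) for one containment with complexification -- Theorem \ref{AffineCommC} plus the complex case of Corollary 4.3.3 in \cite{DK} -- for the other. The only minor variation is at the last step, where you conclude via fixed points of the conjugation on $A(K)^{**}_c \cong BA_{\bC}(K_c)$, whereas the paper deduces surjectivity of the ucoe $\nu : A(K)^{\perp\perp} \to BA(K)$ from surjectivity of its complexification; both hinge on the same identification $BA(K)_c = BA_{\bC}(K_c)$ inside $B(K)_c = B(K_c)$, and the intertwining/bookkeeping you flag is indeed the only detail left to check.
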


      \begin{proof}  To see that $A(K)^{**}$ is completely order isomorphic to the real operator system of bounded nc affine functions on $K$, note that $A(K)^{\perp \perp} \subseteq B(K)$. We need to show $f \in B(K)$ is in 
        $A(K)^{\perp \perp}$ if and only if $f(\beta^* x\beta) = \beta^* f(x) \beta$ for every $x \in K_m$ and isometry $\beta \in M_{m,n}(\bR)$. Since this the latter is true for $f \in A(K)$, it will also be true by 
        a weak* approximation argument 
        for $f \in A(K)^{\perp \perp}$,
        using the fact above the corollary.  This gives a weak* continuous ucoe   $\nu : A(K)^{\perp \perp} \to BA(K)$, where  $BA(K)$ is the operator subsystem of bounded nc affine functions in $B(K)$.  Then
        $BA(K)_c = BA(K) + i BA(K) = BA_{\bC}(K_c)$ in $B(K)_c = B(K_c)$.  
        The rest follows by complexification from         Theorem    4.3.3 in \cite{DK}
        (Theorem 16.8.10 of \cite{Dav}).  Indeed if $\nu$ were not surjective then its complexification would also not be, 
        contradicting that $BA_{\bC}(K_c) \cong A_{\bC}(K_c)^{\perp \perp}$. \end{proof}
       
               \begin{prop}
            Let $K$ be a real compact nc convex set and $f \in C(K)$ a continuous nc function. Then $f$ is bounded with 
            \[||f|| = \sup_{n < \infty} ||f|_{K_n}|| . \]
        \end{prop}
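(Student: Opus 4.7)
The plan is to leverage two facts: $f \in C(K)$ is point-strong continuous by the real version of Theorem 4.3.3 just proved, and the operator norm on any $B(H)$ is SOT-lower-semicontinuous. Boundedness of $f$ and the trivial inequality $\sup_{n<\infty}\|f|_{K_n}\| \leq \|f\|$ are immediate from $C(K) \subseteq B(K)$. For the reverse inequality I would show that any $k \in K_\kappa$ at an infinite level can be approximated in the SOT by elements $k_V \in K_\kappa$ on which $f$ is computable explicitly from a finite-level compression of $k$, and then pass to the limit.

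Fix $k \in K_\kappa$ and pick any $k_0 \in K_1$ (which exists, e.g.\ by compressing $k$ to a unit vector). For each finite-dimensional subspace $V \subseteq \ell^2_\kappa$, choose isometries $\beta : \ell^2_n \to \ell^2_\kappa$ onto $V$ and $\beta' : \ell^2_m \to \ell^2_\kappa$ onto $V^\perp$, and set
\[ k_V \;=\; \beta (\beta^\tran k \beta) \beta^\tran + \beta' (k_0 \cdot 1_m) \beta'^\tran \;=\; P_V k P_V + k_0 \, P_{V^\perp} \in K_\kappa .\]
Membership in $K_\kappa$ uses axiom (2) of the nc convex set definition twice: first to build $k_0 \cdot 1_m \in K_m$ as the direct sum (over $m$ standard basis isometries $\bR \to \ell^2_m$) of copies of $k_0 \in K_1$, and then to combine this with $\beta^\tran k \beta \in K_n$ via the orthogonal pair $\beta,\beta'$. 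A brief calculation gives $k - k_V = P_V k P_{V^\perp} + P_{V^\perp} k P_V + P_{V^\perp}(k - k_0 \cdot 1) P_{V^\perp}$, which tends to $0$ in the SOT as $V$ exhausts $\ell^2_\kappa$, since $P_{V^\perp} \to 0$ in the SOT and $k$, $k_0 \cdot 1$ are bounded.

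Now apply axiom (2) of the nc function definition: one obtains $f(k_V) = \beta f(\beta^\tran k \beta) \beta^\tran + \beta' (f(k_0) \cdot 1_m) \beta'^\tran$, where the identity $f(k_0 \cdot 1_m) = f(k_0) \cdot 1_m$ follows from the same axiom applied to the $m$ copies of $k_0$. This element is block-diagonal with respect to $\ell^2_\kappa = V \oplus V^\perp$, so
\[ \|f(k_V)\| \;=\; \max\bigl( \|f(\beta^\tran k \beta)\|, \, |f(k_0)| \bigr) \;\leq\; \sup_{n<\infty} \|f|_{K_n}\| ,\]
uniformly in $V$. By point-strong continuity of $f$, $f(k_V) \to f(k)$ in the SOT, and SOT-lower-semicontinuity of the norm then yields $\|f(k)\| \leq \liminf_V \|f(k_V)\| \leq \sup_{n<\infty}\|f|_{K_n}\|$. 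Taking the supremum over $k$ completes the proof. The main subtlety lies in invoking axiom (2) for possibly infinite families of isometries — both in showing $k_V \in K_\kappa$ and in evaluating $f(k_V)$ — but this is precisely the form in which those axioms are stated.
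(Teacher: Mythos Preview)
Your proof is correct and follows the natural approach: the paper simply defers to Davidson--Kennedy's Proposition 2.5.3 without spelling out the argument, and what you have written is precisely the mechanism behind that result---approximate an infinite-level point by ``padded'' finite-level data, control $f$ on those approximants via the nc-function axioms, and pass to the limit using SOT lower semicontinuity of the norm.

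One small clarification worth making explicit. When you write ``$k - k_V \to 0$ in the SOT'' and treat $k,\, k_0\cdot 1$ as bounded operators, you are implicitly working through the identification $K_\kappa \cong \mathrm{UCP}(A(K),M_\kappa)$ rather than in $M_\kappa(E)$. What point-strong continuity of $f$ actually requires is that $k_V \to k$ in the \emph{point-strong topology} on $K_\kappa$, i.e.\ that $a(k_V)\to a(k)$ in SOT for every $a\in A(K)$. Since each such $a$ is nc affine (hence respects compressions by isometries, not merely unitaries), one has $a(k_V)=P_V\,a(k)\,P_V + a(k_0)\,P_{V^\perp}$, and your SOT argument then applies verbatim to these scalar-matrix-valued quantities. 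With this identification made explicit the argument is airtight; the computation of $k-k_V$ you gave in $M_\kappa(E)$ is the right heuristic but is not literally where the convergence lives.
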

        
            In  \cite[Proposition 2.5.3]{DK} (or \cite[Remark 16.6.2 (2)]{Dav})   there is a more general version of the last result
    in the complex case;   this holds with the same  proof  in the real case.

        \end{subsection}

        \begin{subsection}{Minimal $C^*$-algebra}
            As in the complex case, every real operator system $V$ has a $C^*$-envelope or minimal $C^*$-algebra denoted by $C^*_{\min}(V)$. There is a ucoe $\iota: V \to C^*_{\min}(V)$ which satisfies the following universal property:
        
        \[
        \begin{tikzcd}
        & A = C^*(\varphi(V)) \arrow[d, two heads, "\pi"] \\
        V \arrow[ur, "\varphi"] \arrow[r, "\iota"'] & C^*_{\min}(V)
        \end{tikzcd}
        \]
        where $\varphi$ is a ucoe of $V$ into another real $C^*$-algebra $A$ such that $\varphi(V)$ generates $A$ as a $C^*$-algebra, and $\pi$ is an induced surjective $*$-homomomorphism making the diagram commute.

        \begin{lemma}
            \cite[Corollary 4.3]{BCK} For a real operator system $S$ we have $C^*_{\min}(S_c) \cong C^*_{\min}(S)_c$ where the prior is the complex minimal $C^*$-algebra of the complex operator system $S_c$ and the latter is complexification of the real minimal $C^*$-algebra of $S$.
        \end{lemma}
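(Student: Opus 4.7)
The plan is to apply the universal property of the $C^*$-envelope in both directions and compose the resulting surjections. First, complexify the canonical ucoe $\iota : S \hookrightarrow C^*_{\min}(S)$ to obtain $\iota_c : S_c \to C^*_{\min}(S)_c$; this is a complex ucoe by the functoriality results for complexification in \cite{BR}, and its image generates $C^*_{\min}(S)_c$ as a complex $C^*$-algebra since $S$ generates $C^*_{\min}(S)$ over $\bR$ and $C^*_{\min}(S)_c = C^*_{\min}(S) + i \, C^*_{\min}(S)$. The universal property of $C^*_{\min}(S_c)$ then yields a surjective complex $*$-homomorphism $\pi : C^*_{\min}(S)_c \twoheadrightarrow C^*_{\min}(S_c)$ that restricts to the identity on $S_c$.

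For the reverse direction, I would extend the canonical period-2 conjugate-linear unital complete order automorphism $\theta$ on $S_c$ (with $\theta(x+iy)=x-iy$ and fixed points $S$) to a period-2 conjugate-linear $*$-automorphism $\tilde\theta$ of $C^*_{\min}(S_c)$. This can be done by viewing $\theta$ as a complex linear ucoe $S_c \to \overline{S_c}$ (where $\overline{S_c}$ denotes $S_c$ with conjugated scalar multiplication) and invoking the universal property to obtain a complex $*$-homomorphism $C^*_{\min}(S_c) \to C^*_{\min}(\overline{S_c}) \cong \overline{C^*_{\min}(S_c)}$, which reinterprets as the desired $\tilde\theta$; period 2 follows because $\tilde\theta^2$ fixes the generating set $S_c$. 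The fixed-point set $B = \{a : \tilde\theta(a)=a\}$ is then a real $C^*$-subalgebra of $C^*_{\min}(S_c)$ containing $S$, with $B \cap iB = 0$ and $B + iB = C^*_{\min}(S_c)$ by the standard decomposition $a = \tfrac{1}{2}(a+\tilde\theta(a)) + i \cdot \tfrac{1}{2i}(a-\tilde\theta(a))$, so $C^*_{\min}(S_c) = B_c$. Comparing the real $C^*$-subalgebra generated by $S$ with $B$ (using that their complexifications both equal $C^*_{\min}(S_c)$) shows $B$ is generated by $S$ over $\bR$; the real universal property of $C^*_{\min}(S)$ then produces a surjective real $*$-homomorphism $\rho : B \twoheadrightarrow C^*_{\min}(S)$, and $\rho_c : C^*_{\min}(S_c) \to C^*_{\min}(S)_c$ is the inverse surjection to $\pi$.

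The conclusion is immediate once both surjections are in hand: $\pi \circ \rho_c$ and $\rho_c \circ \pi$ are complex $*$-homomorphisms which agree with the identity on $S_c$, and since $S_c$ generates each codomain they are identities, so $\pi$ is an isomorphism. The main obstacle is the rigorous construction of $\tilde\theta$ and the identification of $B$ as the real $C^*$-algebra generated by $S$; conceptually this is a standard transfer argument for real structures, but one must be careful about the conjugate-linear bookkeeping in the universal property. An alternative route avoiding $\tilde\theta$ entirely would be to combine Lemma \ref{MaxCCommC} with the presentation $C^*_{\min}(V) = C^*_{\max}(V)/J_V$ for $J_V$ the Shilov boundary ideal, and to verify that $J_{S_c} = (J_S)_c$ under the identification $C^*_{\max}(S_c) = C^*_{\max}(S)_c$; the heart of the matter there is that boundary representations of $S_c$ are the complexifications of boundary representations of $S$, which is where the real content of \cite[Corollary 4.3]{BCK} lies.
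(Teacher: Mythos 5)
Your argument is correct, but note that the paper does not actually prove this lemma: it is quoted verbatim from \cite[Corollary 4.3]{BCK}, where the result is obtained from the theory of real injective envelopes (essentially from $I(S)_c \cong I(S_c)$, the $C^*$-envelope being realized as the $C^*$-subalgebra generated by the system inside its injective envelope). Your route is a self-contained transfer argument using only the two universal properties plus the standard correspondence between real $C^*$-algebras and pairs (complex $C^*$-algebra, conjugation): the complexified embedding $\iota_c : S_c \to C^*_{\min}(S)_c$ gives the surjection $\pi$ one way, and the conjugation $\theta$ of $S_c$, promoted to a conjugate-linear period-2 $*$-automorphism $\tilde\theta$ of $C^*_{\min}(S_c)$ via the $\overline{S_c}$ device (legitimate, since $C^*$-envelopes are functorial under unital complete order isomorphisms --- apply the universal property in both directions and compose), gives the real form $B$ and hence $\rho_c$ the other way; the final ``agree on the generating copy of $S_c$'' argument closes the loop. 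Two small points deserve to be spelled out if you write this up: (i) the identification $C^*_{\min}(S_c)=B_c$ uses the uniqueness of the $C^*$-norm on the complexification of a real $C^*$-algebra (see \cite{Li}); (ii) when you compare $B$ with the real $C^*$-algebra $D$ generated by $S$, you should justify that $D+iD$ is closed --- e.g.\ because the real/imaginary part projections $a \mapsto \tfrac12(a+\tilde\theta(a))$, $a \mapsto \tfrac1{2i}(a-\tilde\theta(a))$ are bounded and preserve $D$-limits --- so that $D+iD$ is a $C^*$-subalgebra containing $S_c$, forcing $D+iD=C^*_{\min}(S_c)$ and then $B=D$ by applying $\tilde\theta$. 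What your approach buys is independence from the injective envelope machinery of \cite{BCK}; what the cited proof buys is brevity, since the hard work is absorbed into the injectivity results already established there. Your closing alternative via $C^*_{\max}$ and the Shilov ideal would also work, and your diagnosis of where the real content sits (boundary/Shilov data complexifying correctly) is accurate.
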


        \begin{example}
            If $\cA$ is a real/complex $C^*$-algebra viewed as an operator system, the universal property shows $C^*_{\min}(\cA) = \cA$. There exists $C^*$-algebras that are not the complexification of real $C^*$-algebras by \cite[Problem 1.5]{Ros} and we can use this to construct complex operator systems and compact nc convex sets which are not complexifications. For instance, let $A$ be such a complex $C^*$-algebra viewed as an operator system, then if it were the complexification of a real operator system $V$ we would get
            \[\cA = C^*_{\min}(\cA) = C^*_{\min}(V_c) \cong C^*_{\min}(V)_c\]
            and the latter is the complexification of a real $C^*$-algebra which is a contradiction. Similarly, if every complex compact nc convex set was a complexification, then $\ncS(\cA)$ would be $L_c$ for some real compact nc convex set $L$. By Theorem \ref{AffineCateg} we have that $\cA \cong A(L_c)$ as complex operator systems and so
            \[\cA \cong C^*_{\min}(A(L_c)) \cong C^*_{\min}(A(L))_c , \]
            which is a contradiction.
        \end{example}
        \end{subsection}

        \end{section}

  \subsection*{Acknowledgements}  
  We acknowledge support from NSF Grant DMS-2154903.  We thank Travis Russell for several conversations and helpful thoughts and information, and thank Matt Kennedy 
  for discussion and who very recently, in answer to a couple of questions of ours related to our draft,  kindly drew our attention   to some items in the ArXiV revision of \cite{DK} of June 10, 2025.  Similarly we thank Scott McCullough for very kind input (solicited by us) on our paper, and on real matrix convexity, and for drawing our attention 
  to several recent papers in that area.   He mentioned to us for example 
  that there is some  complexification procedure 
  in the finite dimensional setting of \cite[Subsection 4.1]{Ev}. 
  We also thank James Pascoe for some comments
  at a similar time, and Ken Davidson for drawing our attention to \cite{Dav}.  We also thank the referee for identifying some things that were not clearly expressed in our preprint, and for several comments and useful suggestions. 
    Finally we mention two related recent papers by the first author released in January 2026: ``Base norm spaces--classical, complex, and noncommutative" (with D. M. Hay), and  
``Regularity of compact convex sets--classical and noncommutative''.   We saw in the present paper that a real nc compact convex set corresponds to a real nc state space.   However this is 
 essentially also the real {\em nc base} for a real {\em nc base norm space}, and similarly in the complex case.

\end{document}